\newtheorem{satz}{Theorem}
\newtheorem{theorem}[satz]{Theorem}
\newtheorem{lemma}[satz]{Lemma}
\newtheorem{corollary}[satz]{Corollary}
\newtheorem{remark}[satz]{Remark}
\def\eps{\varepsilon}
\def\_phi{\varphi}
\def\a{\alpha}
\def\d{\delta}
\def\F{{\mathbb F}}
\def\t{\tilde}
\def\o{\omega}
\def\ov{\overline}
\def\C{{\mathbb C}}
\def\R{{\mathbb R}}
\def\E{\mathsf {E}}
\def\T{{\mathbb T}}
\def\Z_N{{\mathbb Z}_N}
\def\Z{{\mathbb Z}}
\def\f{{\mathbb F}}
\def\Gr{{\mathbf G}}
\def\D{{\mathbb D}}
\def\oM{{\rm M}}
\def\G{\Gamma}
\def\FF{\widehat}
\def\c{\circ}
\def\D{\Delta}
\def\T{\mathsf {T}}
\author{Shkredov I.D.}
\title{ Some remarks on the asymmetric sum--product phenomenon 
	\footnote{
		This work is supported by the Russian Science Foundation under grant 14-11-00433.}
}
\date{}
\begin{document}
	\maketitle

\begin{center}
	Annotation.
\end{center}

{\it \small
	Using
	some new observations  connected to higher energies, 
	we obtain 
	quantitative lower bounds on 
	$\max\{|AB|, |A+C| \}$ and $\max\{|(A+\a)B|, |A+C|\}$, $\a\neq 0$  
	in the regime when the sizes of finite subsets $A,B,C$ 
	of a field 
	differ significantly. 
}
\\

	\section{Introduction}
	\label{sec:introduction}

	Let $p$ be a prime number and $A,B\subset \F_p = \Z/p\Z$ be finite sets.
	Define the  \textit{sum set}, the \textit{difference set},
	the \textit{product set} and the \textit{quotient set} of $A$ and $B$ as
	$$A+B:=\{a+b ~:~ a\in{A},\,b\in{B}\}\,, \quad \quad \quad  A-B:=\{a-b ~:~ a\in{A},\,b\in{B}\}$$
	$$AB:=\{ab ~:~ a\in{A},\,b\in{B}\}\,,\quad  \quad \quad A/B:=\{a/b ~:~ a\in{A},\,b\in{B},\,b\neq0\}\,,$$
	correspondingly.
	One of the central problems in arithmetic combinatorics \cite{TV} it is the
	{\it sum--product problem}, which asks  for estimates of the form 
	\begin{equation}\label{f:sum-product}
	\max \{ |A+A|, |AA| \} \ge |A|^{1+c} 
	\end{equation}
	for some positive $c$. 
	This question was originally posed by Erd\H{o}s and Szemer\'edi \cite{ES} for finite sets of integers; they conjectured that (\ref{f:sum-product}) holds for all $c< 1$. The sum--product problem has since been studied over a variety of fields and rings, see, e.g. 
	\cite{Bourgain_01}, \cite{Bourgain_Z_q}, \cite{Bourgain_ideal}, \cite{BC}, \cite{BKT}, \cite{ES}, \cite{TV} and others.
	We focus on the case of $\F_p$ (and sometimes consider $\R$), where the first estimate of the form (\ref{f:sum-product}) was proved
	by Bourgain, Katz, and Tao \cite{BKT}. 
	At the moment the best results in this direction  are contained in \cite{RRS} and in \cite{KS2}.

	In this 
	article 
	we study an asymmetric variant of the sum--product question
	("the sum--product theorem in $\F_p$ for sets of distinct sizes")
	in the spirit of fundamental paper \cite{Bourgain_more}. 
	Let us 
	recall two results from here.

	\begin{theorem}
		Given $0<\eps<1/10$, there is $\d>0$ such that the following holds.
		Let $A\subset \F_p$ and 
		$$
		p^\eps < |A| < p^{1-\eps} \,.
		$$
		Then either 
		$$
		|AB| > p^\d |A| \quad \mbox{ for all } \quad  B\subset \F_p\,, |B| > p^\eps 
		$$
		or 
		$$
		|A+C| > p^\d |A| \quad \mbox{ for all } \quad  C\subset \F_p\,, |C| > p^\eps \,.
		$$
		\label{t:Bourgain_ABC}
	\end{theorem}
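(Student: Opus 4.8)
The plan is to argue by contradiction and to \emph{quantify} the failure of the dichotomy: I will show that if there exist $B,C\subset\F_p$ with $|B|,|C|>p^\eps$ and
$$
|AB|\le p^\d|A| \qquad \text{and} \qquad |A+C|\le p^\d|A|\,,
$$
then $\d\ge\d_0$ for some explicit $\d_0=\d_0(\eps)>0$; taking $\d<\d_0$ then proves the theorem. Write $K=p^\d$. After deleting $0$ from $A$ and from $B$, dilating $B$ by one of its elements, and translating $C$ by one of its elements --- operations which disturb the hypotheses by at most a factor $2$ --- I may assume $0\notin A$, $0\notin B$, $1\in B$ and $0\in C$, so that $A\subseteq AB$ and $A\subseteq A+C$; note also $|AB|,|A+C|<p$ as soon as $\d<\eps$.

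The first thing to try is an Elekes-type incidence count. For $(b,c)\in B\times C$ let $\ell_{b,c}$ be the line $\{(x,y):y=b(x-c)\}$; since $b$ is the slope and $-bc$ the intercept, the family $L=\{\ell_{b,c}\}$ consists of exactly $|B||C|$ lines. On the point set $P=(A+C)\times(AB)$, of size $|P|\le|A+C|\cdot|AB|\le K^2|A|^2$, each $\ell_{b,c}$ contains the $|A|$ distinct points $(a+c,ab)$, $a\in A$, so the incidence count satisfies $I(P,L)\ge|A||B||C|$. When $P$ and $L$ are comparable in size, i.e.\ $|B||C|\gtrsim|A|^2$, the Bourgain--Katz--Tao incidence theorem in $\F_p$ beats this lower bound by a power of $p$ and forces $K\ge p^{\d_0(\eps)}$. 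But the hypotheses place no constraint on the sizes of $B,C$ relative to $A$, and when $|B|,|C|\ll|A|$ there are far fewer lines than points --- a range in which the available incidence estimates are of no help --- while one cannot rebalance the configuration by shrinking $A$, since $A+C$ and $AB$ do not shrink with $A$. So the incidence approach alone is insufficient and the argument must become structural.

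The structural route begins by symmetrising with the Pl\"unnecke--Ruzsa inequalities (see \cite{TV}): $|AB|\le K|A|$ gives $|A\cdot(B/B)|\le K^2|A|$, and $|A+C|\le K|A|$ gives $|A+(C-C)|\le K^2|A|$. The crux is to extract from $|AB|\le K|A|$ a large subset $A_1\subseteq A$, with $|A_1|\ge|A|/K^{O(1)}$, such that $|A_1A_1|\le K^{O(1)}|A_1|$, and then, applying the corresponding additive extraction to $A_1$ (legitimate since $|A_1+C|\le|A+C|\le K^{O(1)}|A_1|$), a further subset $A_2\subseteq A_1$ with $|A_2|\ge|A_1|/K^{O(1)}$ and $|A_2+A_2|\le K^{O(1)}|A_2|$; as $A_2\subseteq A_1$ one also retains $|A_2A_2|\le K^{O(1)}|A_2|$, so $A_2$ has both small additive and small multiplicative doubling, and the symmetric sum--product theorem of \cite{BKT}, applicable since $p^{\eps/2}<|A_2|<p^{1-\eps}$, forces $K\ge p^{\d_0(\eps)}$. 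The hard part --- and precisely what the higher-energy observations of this paper are designed to supply --- is each of these two extractions in the asymmetric range, say $|B|\ll|A|$: there the naive scheme collapses, because the second moment $\E^{\times}(A,B)\gg|A||B|^2/K$ lies far below the threshold $|A|^{3/2}|B|^{3/2}$ at which a direct Balog--Szemer\'edi--Gowers argument bites, and one must instead exploit higher energies of $A$ --- third moments and $L^p$-masses of $r_{A-A}$ and $r_{A/A}$ --- which retain enough of the additive and multiplicative structure of $A$ to perform the extraction with all losses polynomial in $K$.
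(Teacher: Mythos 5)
Your write-up is not a proof: both routes you describe are left open, and the one you commit to rests on an unproved extraction lemma which is in fact false as stated. You claim that from $|AB|\le K|A|$, $|B|>p^{\eps}$, alone one can extract $A_1\subseteq A$ with $|A_1|\ge |A|/K^{O(1)}$ and $|A_1A_1|\le K^{O(1)}|A_1|$. Take $g\in\F_p^*$ of large multiplicative order, $B=\{g,g^2,\dots,g^{N}\}$ with $N\approx p^{\eps}$, and $A=X\cdot B$ with $X$ a generic (multiplicatively dissociated) set of size $p^{1/4}$; then $|AB|\le 2|A|$, while any $A_1\subseteq A$ with $|A_1|\ge|A|/K^{O(1)}$ meets at least $|X|/(NK^{O(1)})\cdot N/N\ge |X|/K^{O(1)}$ of the dilates $x_iB$, so $|A_1A_1|\gg |X|^2/K^{O(1)}\gg K^{O(1)}|A_1|$ because $|X|=p^{1/4}\gg p^{\eps+O(\d)}$. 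Thus no such $A_1$ exists: the multiplicative hypothesis alone gives no large subset of small multiplicative doubling, and your second (additive) extraction has the same defect. Any correct version of the extraction would have to use both hypotheses on $A$ simultaneously, and that joint statement is essentially the theorem you are trying to prove; declaring that it is "precisely what the higher-energy observations of this paper are designed to supply" does not close the gap, and it also misdescribes what those observations actually give.

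The paper does not argue this way at all (and it only recovers the present statement in the range $|A|<p^{1/2-\eps}$; the full range is Bourgain's theorem, quoted from the literature). Its route is direct, not by contradiction against a symmetric sum--product statement: from $|AB|\le M|A|$ it bounds higher additive energies of $A$ itself, showing via an iterative moment comparison built on Rudnev's point--plane incidence theorem (Lemma \ref{l:AA_small_energy}, Corollary \ref{c:change_QG}) that $\E^{+}_{2^{k+1}}(A)\le 2|AB^{k}|^{2^{k+1}}$ once $|B|^{k/8+1/2}$ exceeds $|A|$ up to $M^{O(2^k)}$ and logarithmic factors (Theorem \ref{t:QG}). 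This near-minimal higher energy yields, for every $x\neq 0$, bounds of the form $|A\cap(A+x)|\ll M^{O(k)}|A|\,|B|^{-c2^{-k}}$ (Corollaries \ref{c:Q_cap_Q+x_M}, \ref{c:QM_shift_more}), hence upper bounds on $\E^{+}(A,C)$ and lower bounds on $|A+C|$ for \emph{every} $C$ with $|C|\ge p^{\eps}$ (Corollaries \ref{c:E(Q,A)_M}, \ref{c:ABC_my}, Remark \ref{r:implication}). No subset with two-sided small doubling is produced, the symmetric theorem of \cite{BKT} is never invoked, and your Elekes-type incidence count and Pl\"unnecke symmetrisation play no role. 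What remains of your proposal after removing the inapplicable incidence step is a correctly identified difficulty, but not an argument.
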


	\begin{theorem}
		Given $0<\eps<1/10$, there is $\d>0$ such that the following holds.
		Let $A\subset \F_p$ and 
		$$
		p^\eps < |A| < p^{1-\eps} \,.
		$$
		Then for any $x\neq 0$ either 
		$$
		|AB| > p^\d |A| \quad \mbox{ for all } \quad  B\subset \F_p\,, |B| > p^\eps
		$$
		or 
		$$
		|(A+x)C| > p^\d |A| \quad \mbox{ for all } \quad  C\subset \F_p\,, |C| > p^\eps \,.
		$$
		\label{t:Bourgain_ABC_shift}
	\end{theorem}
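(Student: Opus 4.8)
The plan is to proceed by contradiction. Suppose the assertion fails for some $x \neq 0$: there exist $B \subset \F_p$ with $|B| > p^\eps$ and $|AB| \le p^\d |A|$, and $C \subset \F_p$ with $|C| > p^\eps$ and $|(A+x)C| \le p^\d|A|$. Deleting the at most one zero from each of $A$, $A+x$, $B$, $C$ affects cardinalities negligibly, so we may assume $A, A+x \subset \F_p^{\times}$ and $0 \notin B \cup C$. The guiding idea is that the first hypothesis makes $A$ multiplicatively rigid, the second makes $A+x$ multiplicatively rigid, and a set together with a nonzero additive translate of it cannot both be multiplicatively rigid unless the set already exhausts almost all of $\F_p$ — an instance of the sum--product phenomenon. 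Note that Theorem~\ref{t:Bourgain_ABC} applied to $A$ and to $A+x$ does not by itself close the argument: it only forbids small sumsets $A+D$, and in fact yields the same conclusion in both cases, so a genuinely new ingredient is needed.

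The first step extracts the rigidity. By Cauchy--Schwarz the multiplicative energy satisfies $\E^{\times}(A,B) \ge |A|^2|B|^2/|AB| \ge |A||B|^2 p^{-\d}$; writing $\E^{\times}(A,B) = \sum_{\la} |A \cap \la A|\,|B \cap \la B|$ and using $\sum_\la |B \cap \la B| = |B|^2$ together with a popularity pigeonhole (the trivial bound $|B\cap\la B| \le |B|$ is where $|B| > p^\eps$ is used) one obtains a symmetric set $\Lambda_B \ni 1$ with $|\Lambda_B| \gg |B| p^{-O(\d)} > p^{\eps - O(\d)}$ and $|A \cap \la A| \gg |A| p^{-O(\d)}$ for every $\la \in \Lambda_B$. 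The same computation applied to $(A+x)C$ produces $\Lambda_C \ni 1$ with $|\Lambda_C| > p^{\eps - O(\d)}$ and $|(A+x) \cap \mu(A+x)| \gg |A| p^{-O(\d)}$ for every $\mu \in \Lambda_C$. If desired, one may further replace $\Lambda_B$, $\Lambda_C$ by structured subsets, or pass from $A$ to a large piece covered by few dilates of a multiplicative approximate subgroup, via the Pl\"unnecke--Ruzsa inequality and the Balog--Szemer\'edi--Gowers theorem, at the price of additional $p^{O(\d)}$ factors.

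The decisive step is to derive a contradiction from these two pieces of information, and here a real arithmetic input is required. Intuitively, $A$ is almost invariant under the dilations $z \mapsto \la z$ ($\la \in \Lambda_B$) fixing $0$, while $A+x$ is almost invariant under the dilations $w \mapsto \mu w$ ($\mu \in \Lambda_C$) fixing $0$, i.e.\ $A$ is almost invariant under the affine maps $z \mapsto \mu(z+x) - x$ fixing $-x$. Thus a large subset of $A$ is approximately stabilised by two families of affine maps with distinct fixed points; such families generate a large portion of the affine group $\F_p \rtimes \F_p^{\times}$, and since expansion in the affine group is equivalent to the sum--product phenomenon, the orbit of a point of $A$ must be either small — which forces $|\Lambda_B|, |\Lambda_C| \le p^{O(\d)}$, contradicting $|B|, |C| > p^\eps$ — or essentially all of $\F_p$ — which forces $|A| > p^{1-\eps}$, contradicting the hypothesis. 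Concretely one would quantify this by a point--line incidence bound in $\F_p^2$, or by an $\F_p$ sum--product estimate of the form $\max\{|XY|,|X+Y|\} \gg |X|^{1+c}$ applied to suitable sets built from $A$, $\Lambda_B$, $\Lambda_C$; passing to an approximate subgroup one may assume, by Freiman's theorem in $\Z/(p-1)\Z$, that the multiplicative rigidity pins $A$ near a union of dilates of a geometric progression, and a geometric progression cannot have a nonzero additive translate that is again multiplicatively structured. The required gain is a fixed power $p^{c\eps}$ with $c>0$ absolute.

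Finally one balances the losses: the extraction and structuring steps cost $p^{O(\d)}$, with the implied constants depending only on $\eps$ through the places where one divides by $|B|$ or $|C|$ (both $\ge p^\eps$), while the sum--product step gains $p^{c\eps}$; taking $\d$ to be a sufficiently small constant multiple of $\eps$ makes the gain dominate and yields the contradiction. I expect the decisive step to be the main obstacle: it is where a sum--product estimate is unavoidable, and it must be made uniform over the whole range $p^\eps < |A| < p^{1-\eps}$ — in particular in the worst case $|A| \approx p^{1-\eps}$ with $|B| \approx |C| \approx p^\eps$, where the extracted multiplicative energy of $A$ is only about $|A|^2 p^{\eps}$ rather than $|A|^3$ — which is likely to force either an iteration over scales or a delicate choice of which sum--product input to invoke.
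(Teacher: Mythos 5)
Your first step (Cauchy--Schwarz on $\E^{\times}(A,B)$ and $\E^{\times}(A+x,C)$ plus pigeonholing to get large sets $\Lambda_B,\Lambda_C$ of approximate dilation symmetries of $A$ and of $A+x$) is correct but routine; the theorem lives entirely in what you call the decisive step, and that step is not proved -- it is restated. Saying that the two families of affine maps ``generate a large portion of the affine group'' and that ``expansion in the affine group is equivalent to the sum--product phenomenon'' is circular: the quantitative expansion statement you would need, uniform in the range $p^{\eps}<|A|<p^{1-\eps}$ and with gain $p^{c\eps}$ against losses $p^{O(\d)}$, \emph{is} the content of the theorem. Moreover the dichotomy ``orbit small or orbit essentially all of $\F_p$'' is a statement about genuine group orbits, while you only have approximate invariance ($|A\cap \la A|\gg |A|p^{-O(\d)}$ for $\la$ in a set that is not a group), and no argument is given to pass from one to the other. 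The Freiman fallback also fails quantitatively: the multiplicative doubling you can extract is only $p^{O(\d)}$, and Freiman-type structure theorems in $\Z/(p-1)\Z$ with doubling $K=p^{O(\d)}$ produce progressions of rank at least of order $\log K = O(\d\log p)$, which grows with $p$, so no ``geometric progression'' structure is available; and even granting it, the assertion that a geometric progression cannot have a nonzero additive translate that is again multiplicatively structured is itself an unproven quantitative claim of the same depth as the theorem. Finally, structure extracted for $A$ (via $B$) and for $A+x$ (via $C$) a priori lives on different large subsets of $A$, and you give no mechanism forcing the two structured pieces to interact.

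For comparison, the source you would need is explicit: the paper takes Theorem~\ref{t:Bourgain_ABC_shift} from \cite{Bourgain_more}, where it is deduced from the moment bound of Theorem~\ref{t:Bourgain_T_k} (small multiplicative doubling forces $\T^{+}_k$ to be nearly minimal) together with Fourier/exponential-sum arguments; the present paper's own machinery replaces this by an explicit energy engine -- Rudnev's point--plane incidence bound (Theorem~\ref{t:Misha+}) through Lemma~\ref{l:AA_small_energy}, iterated on the higher energies $\E^{+}_{2^k}$ with dyadic level sets (Theorem~\ref{t:QG}, Corollaries~\ref{c:Q_cap_Q+x_M}, \ref{c:E(Q,A)_M}, \ref{c:ABC_my}) -- and recovers Theorems~\ref{t:Bourgain_ABC} and \ref{t:Bourgain_ABC_shift} in the range $|A|<p^{1/2-\eps}$ (Remark~\ref{r:implication}). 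Either way, the decisive step is carried by a concrete, explicitly quantified energy or incidence estimate, not by an affine-group expansion heuristic; supplying such an estimate is exactly what is missing from your proposal.
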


	Theorems \ref{t:Bourgain_ABC}, \ref{t:Bourgain_ABC_shift} were derived in \cite{Bourgain_more} from the following result of paper \cite{Bourgain_DH}. 
	Given a set $A\subseteq \F_p$ denote by $\T^{+}_k (A) := |\{ (a_1,\dots,a_k,a'_1,\dots,a'_k) \in A^{2k} ~:~ a_1 + \dots + a_k = a'_1 + \dots + a'_k \}|$.
	We write $\E^{+} (A)$ for $\T^{+}_2 (A)$.

	\begin{theorem}
		For a positive integer $Q$, there are a positive integer $k$ and a real $\tau >0$ such that if $H\subseteq \F_p^*$ and 
		$$
		|HH| < |H|^{1+\tau}
		$$
		then
		$$
		\T^{+}_k (H) < |H|^{2k} (p^{-1+1/Q} + c_Q |H|^{-Q}) \,,
		$$ 	
		where $c_Q > 0$ depends on $Q$ only. 
		\label{t:Bourgain_T_k}
	\end{theorem}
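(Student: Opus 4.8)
The statement is exactly the main result of Bourgain--Dvir--Hart (here cited as \cite{Bourgain_DH}), so the proof proposal should reconstruct their multiscale/Freiman-type argument. The starting hypothesis is multiplicative: $|HH| < |H|^{1+\tau}$, which by the Plünnecke--Ruzsa inequality gives control on all multiplicative sum sets, $|H^{(k)}| \ll |H|^{1+O_k(\tau)}$ where $H^{(k)}$ is the $k$-fold product set. The conclusion is additive: the $k$-fold additive energy $\T^{+}_k(H)$ is essentially as small as it can be, namely $|H|^{2k} p^{-1}$ up to a $p^{1/Q}$ loss, unless $H$ is so small that the trivial lower bound $|H|^{-Q}$ term dominates. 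So the engine must be a sum-product type inequality in $\F_p$, iterated $Q$-many times, which is why both $k$ and $\tau$ are allowed to depend on $Q$.

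**Key steps.** First I would pass from the energy $\T^{+}_k(H)$ to a large additive-structure statement: if $\T^{+}_k(H)$ is much bigger than $|H|^{2k}/p$, a dyadic pigeonholing on the level sets of the $k$-fold convolution $H*\cdots*H$ produces a popular set $H'\subseteq H$ with $|H'|\gg |H|$ and a large subset $S$ of $(k-1)H'$ (or of $kH$) with small doubling relative to $H$, so that $H$ has substantial additive energy with a structured set. Concretely, one extracts from the hypothesis that $|kH|\ll |H|p^{1/Q'}$ for a suitable $Q'$, i.e. $H$ does not additively expand under $k$-fold addition. Second, I would feed this into the Bourgain--Glibichuk--Konyagin-style sum-product machinery: a set $H$ that is simultaneously $k$-fold multiplicatively small and $k$-fold additively small must, by a covering/incidence argument, essentially be trapped in a coset of a subfield; over $\F_p$ with $p$ prime there are no proper subfields, so $H$ is forced to be either of size $\gg p^{1-o(1)}$ (handled by the $p^{-1+1/Q}$ term) or of size $\ll p^{o(1)}$ (where $c_Q|H|^{-Q}$ kicks in). The bookkeeping to make "$k$-fold small multiplicatively" and "$k$-fold small additively" interact is done by Ruzsa's triangle inequality and repeated sum-product estimates, each application costing a bounded power of $|H|$ and forcing $k$ and $\tau$ upward by a bounded factor; iterating $Q$ times yields the stated exponents $1/Q$ and $|H|^{-Q}$.

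**Main obstacle.** The delicate point is the quantitative balancing act: each sum-product step converts a multiplicative-smallness gain of size $|H|^{\eta}$ into an additive-smallness gain, but only at the expense of shrinking $H$ to a subset and enlarging the number of additions $k$; one must choose the schedule of parameters $\tau = \tau(Q)$, $k = k(Q)$, and the intermediate $\eta_j$ so that after $Q$ iterations the exponent has improved from the trivial $p^0$ down to $p^{-1+1/Q}$ without the subset $H'$ having shrunk below $|H|^{1-o(1)}$. Keeping the error terms from accumulating — in particular controlling the $c_Q$ constant and ensuring the argument does not degenerate when $|H|$ is a small power of $p$ — is the real work; the individual sum-product inputs (say, the $\F_p$ estimate from \cite{BKT} or its refinements) are used as black boxes. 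A secondary technical nuisance is handling the zero element and the multiplicative-group structure ($H\subseteq\F_p^*$), which is why the hypothesis already excludes $0$ and lets one work inside $\F_p^*$ cleanly.
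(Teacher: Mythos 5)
A point of orientation first: this paper does not prove Theorem \ref{t:Bourgain_T_k} at all --- it is quoted as background from \cite{Bourgain_DH} (Bourgain's paper on the Diffie--Hellman distributions; not Bourgain--Dvir--Hart). What the paper does prove is an explicit quantitative analogue, Theorem \ref{t:T_k_G_M_intr} (= Theorem \ref{t:T_k_G_M}), and by a genuinely different route from the one you outline: no Balog--Szemer\'edi--Gowers, no iterated sum--product/subfield dichotomy, and essentially no Fourier analysis. The paper pigeonholes dyadically on the level sets of $r_{sA}$, bounds the additive energy of a level set $P$ via Rudnev's point--plane incidence theorem (Theorem \ref{t:Misha+} through Lemma \ref{l:AA_small_energy}, using $|PB|\le M^{s}|A|^{s}/\Delta$), and runs an induction comparing $\T^{+}_{2s}$ with $\T^{+}_{s}$; after $k-1$ iterations this gives (\ref{f:T_k_G_M}) with explicit constants and the gain $|B|^{-(k-1)/2}$. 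Specializing $B=A=H$, $M=|H|^{\tau}$, taking the dyadic index $k$ of order $Q$ and $\tau\lesssim 2^{-k}/Q$ recovers the quoted statement (small $|H|$ being absorbed into $c_Q$). So your plan --- reconstructing the original argument of \cite{Bourgain_DH}, \cite{BGK} --- is a legitimately different approach; what it would buy is only the qualitative existence of $k(Q),\tau(Q)$, whereas the paper's incidence-based induction gives explicit, polynomial-in-$Q$ parameters and works over $\R$ as well.

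As a proof sketch, though, your outline has two concrete soft spots that should be flagged. First, dyadic pigeonholing on level sets of the $k$-fold convolution does not by itself produce a large subset of $H$ (or of $kH$) with small doubling: passing from ``$\T^{+}_k(H)$ is large'' to ``a positive-density subset has small sumset'' requires a Balog--Szemer\'edi--Gowers-type step (or a Fourier-analytic substitute), and this is precisely where most of the loss in $k$ and $\tau$ occurs, so it cannot be treated as bookkeeping. Second, the dichotomy you describe --- that $H$ is forced to have size $\gg p^{1-o(1)}$ or $\ll p^{o(1)}$ --- is not how the two terms in the conclusion arise; the bound holds for all sizes of $H$, the crossover between $p^{-1+1/Q}$ and $c_Q|H|^{-Q}$ occurring already around $|H|\approx p^{1/Q}$, and the subfield obstruction enters only inside the sum--product input you are quoting as a black box. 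With the BSG step made explicit and the parameter schedule carried out as in \cite{BGK}, your route would indeed reprove the theorem, but it is not the argument of this paper, and as written the central iteration is still a plan rather than a proof.
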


	The aim of this paper is to obtain 
	explicit 
	bounds in the theorems above. 
	Our arguments are different
	and  
	more elementary than in \cite{Bourgain_more}, \cite{BGK} and \cite{Garaev_survey}. 
	In the proof we almost do not use the Fourier approach and hence the container group $\F_p$.
	That is why we do not need in lower bounds for sizes of $A,B,C$ in terms of the characteristic $p$ but, of course, these sets must be comparable somehow. 
	Also, the arguments work in $\R$ as well and it differs this article  from paper \cite{Bourgain_more}, say.
	Let us formulate our variant of Theorems \ref{t:Bourgain_ABC}, \ref{t:Bourgain_ABC_shift} (see Corollary \ref{c:ABC_my} below).
	One can show 
	that Theorem  \ref{t:ABC_my_intr} implies both of these results if $|A| < p^{1/2-\eps}$, say, 
	see Remark \ref{r:implication} from section \ref{sec:small_prod}.

	\begin{theorem}
		Let $A,B,C\subseteq \F_p$ be arbitrary sets, 
		and $k\ge 1$ be such that 
		$|A| |B|^{1+\frac{(k+1)}{2(k+4)}2^{-k}} \le p$ 
		and 
		\begin{equation}\label{cond:ABC_my_intr}
		|B|^{\frac{k}{8}+ \frac{1}{2(k+4)}} \ge |A| \cdot  C^{(k+4)/4}_* \log^{k}  (|A||B|) \,.
		\end{equation}
		where $C_* >0$ is an absolute constant. 
		Then
		\begin{equation}\label{f:ABC_my1_intr}
		\max\{ |AB|, |A+C| \} \ge 2^{-3} |A| \cdot \min\{|C|, |B|^{\frac{1}{2(k+4)} 2^{-k}} \} \,,
		\end{equation}
		and for any $\a \neq 0$
		\begin{equation}\label{f:ABC_my2_intr}
		\max\{ |AB|, |(A+\a)C| \} \ge 2^{-3} |A| \cdot \min\{|C|, |B|^{\frac{1}{2(k+4)} 2^{-k}} \} \,.
		\end{equation}
		\label{t:ABC_my_intr}
	\end{theorem}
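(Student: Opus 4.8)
The plan is to prove Theorem \ref{t:ABC_my_intr} by combining a higher-energy incidence/multiplicativity argument with Theorem \ref{t:Bourgain_T_k} (or rather, with its elementary analogue obtained inside the proof). The key idea is that the hypothesis $|AB|$ small forces the set $A$ to live essentially inside a multiplicative structure, and then $\T^{+}_k$ of that structure is controlled; combined with $|A+C|$ small this gives too many additive coincidences, contradicting the bound on $\T^{+}_k$.

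First I would set up the standard reductions. Assume for contradiction that both $|AB| < 2^{-3}|A| M$ and $|A+C| < 2^{-3}|A| M$, where $M := \min\{|C|, |B|^{2^{-k}/(2(k+4))}\}$. By a Plünnecke–Ruzsa / Katz–Koester covering argument, $|AB|$ small yields a large subset $A' \subseteq A$ with small multiplicative doubling, i.e. $|A'A'| \ll |A'|^{1+\tau}$ for a $\tau$ depending on the parameters, after passing to a popular fiber of the map $a \mapsto aB$; similarly one extracts from $|A+C|$ small a control on additive energies of a large piece of $A$. The crucial quantitative input is that $M$ being a power of $|B|$ lets us choose $\tau$ as small as needed (this is exactly where condition \eqref{cond:ABC_my_intr} enters, matching the $k$ in Theorem \ref{t:Bourgain_T_k}).

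Next, I would apply the elementary version of Theorem \ref{t:Bourgain_T_k}: from $|A'A'| < |A'|^{1+\tau}$ conclude $\T^{+}_k(A') < |A'|^{2k}(p^{-1+1/Q} + c_Q|A'|^{-Q})$, which under the hypothesis $|A||B|^{1+(k+1)2^{-k}/(2(k+4))} \le p$ simplifies to $\T^{+}_k(A') \ll |A'|^{2k-1} \cdot |A'|^{o(1)}$ — that is, $A'$ behaves, additively, like a set with tiny additive energy of all orders up to $k$. On the other hand, the assumption $|A+C|$ small, via a Cauchy–Schwarz / tensor-power argument (iterating the basic estimate $\T^{+}_2(A) \ge |A|^4/|A+C|^2 \cdot (\text{something in } C)$ up through order $k$), forces $\T^{+}_k(A')$ to be at least $|A'|^{2k}/(|A+C|^{k} \cdot \dots) \gg |A'|^{2k} M^{-k} \cdot |A'|^{?}$, and since $M$ is a small power of $|B|$ this lower bound beats the upper bound coming from Theorem \ref{t:Bourgain_T_k} — contradiction. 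The shifted statement \eqref{f:ABC_my2_intr} follows by the same scheme applied to $A+\a$ in place of $A$, noting that $(A+\a)(A+\a)$ small still gives a multiplicative-structure conclusion about $A+\a$, hence additive control, while $|(A+\a)+C| = |A+(C+\a)|$ has the same size as the quantity we control.

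The main obstacle will be the bookkeeping in the two-sided extraction: one must pass to a single subset $A'$ that is simultaneously large, has small multiplicative doubling (to feed Theorem \ref{t:Bourgain_T_k}), \emph{and} retains enough of the additive-energy lower bound forced by $|A+C|$ small. Getting these to coexist without losing more than a constant factor (the $2^{-3}$ in \eqref{f:ABC_my1_intr}) requires a careful dyadic pigeonholing on the fibers of $a \mapsto ab_0$ and $a \mapsto a+c_0$ for popular $b_0 \in B$, $c_0 \in C$, and tracking how the logarithmic losses $\log^k(|A||B|)$ accumulate — this is precisely why the exponent of the logarithm in \eqref{cond:ABC_my_intr} is $k$ and the constant is raised to the power $(k+4)/4$. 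The delicate point, and where I expect the real work to lie, is verifying that the arithmetic of exponents — $\frac{k}{8} + \frac{1}{2(k+4)}$ on one side versus $1 + \frac{(k+1)}{2(k+4)}2^{-k}$ on the other — closes the gap between the $\T^{+}_k$ upper bound and the $\T^{+}_k$ lower bound with room to spare.
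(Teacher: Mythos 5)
There is a genuine gap, and it sits at the two places you yourself flag as "bookkeeping": neither can be closed along the route you propose. Your plan is essentially Bourgain's: extract multiplicative structure from $|AB|\le M|A|$, bound $\T^{+}_k$ of (a large piece of) $A$, and contradict this with a lower bound on $\T^{+}_k$ coming from $|A+C|$ small. But the step "small $|A+C|$ forces $\T^{+}_k(A')$ large" is exactly the step you leave with a ``$?$'' in the exponent, and it fails in the asymmetric regime the theorem is designed for. The only general passage from $|A+C|$ to $\T^{+}_k(A)$ is via $\E^{+}(A,C)\ge |A|^2|C|^2/|A+C|$ and H\"older, which yields roughly $\T^{+}_k(A)\gtrsim \E^{+}(A,C)^k/|C|^{k+1}$; when $|C|$ is tiny compared to $|A|$ (the theorem allows, say, $|A|>(|B||C|)^{100}$, with no lower bound on $|A|,|B|,|C|$ in terms of $p$, and it also holds over $\R$) this lower bound is of order $|A|^k$, hopelessly below any upper bound of the shape $|A|^{2k-1-c}$ one could get from Theorem \ref{t:T_k_G_M}. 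Bourgain closes this gap only by using Fourier analysis in $\F_p$ together with hypotheses $|A|,|B|,|C|>p^{\eps}$, which is precisely what the present theorem dispenses with; also note that Theorem \ref{t:Bourgain_T_k} itself is non-quantitative (unspecified $k,\tau$), so it cannot produce the explicit exponents $\frac{1}{2(k+4)}2^{-k}$ and the constant $2^{-3}$.

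The paper's actual mechanism is different and is the missing idea: it is not $\T^{+}_k$ but the "dual" energy $\E^{+}_{2^{k+1}}(A)=\sum_x r_{A-A}^{2^{k+1}}(x)$ that is bounded (Theorem \ref{t:QG}), using multiplicative Pl\"unnecke--Ruzsa to control $|X\G^l|$ for a large $X\subseteq A$ and the point--plane incidence bound (Lemma \ref{l:AA_small_energy}, Corollary \ref{c:change_QG}). This converts into a \emph{pointwise} bound $|X\cap (X+x)|\ll |X|\,|B|^{-\frac{1}{2}2^{-k}}$ for $x\ne 0$ (Corollaries \ref{c:Q_cap_Q+x_M}, \ref{c:E(Q,A)_M}), hence into $\E^{+}(C,X)\le |C||X|+|C|^2\max_{x\ne0}|X\cap(X+x)|$, which is nontrivial for \emph{arbitrary} $C$, however small; Cauchy--Schwarz and optimizing over $M=|AB|/|A|$ then give (\ref{f:ABC_my1_intr}). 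Your treatment of the shifted case is also off: (\ref{f:ABC_my2_intr}) assumes $|AB|$ and the \emph{product} $|(A+\a)C|$ are both small — there is no sumset $|(A+\a)+C|$ to rewrite — so one cannot "apply the same scheme to $A+\a$"; the paper instead bounds the multiplicative energy $\E^{\times}(A+\a,C)$ through $r_{(X+\a)/(X+\a)}(x)=|X\cap(xX+\a(x-1))|$, i.e.\ intersections of a dilate of $X$ with a shift, which the same intersection bound controls. Without the higher-energy/intersection step your argument cannot reach the stated conclusion in the regime where $A$ dominates $B$ and $C$.
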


	Actually, we prove that the lower bounds for 
	$|A+C|$, $|(A+\a)C|$ in (\ref{f:ABC_my1_intr}), (\ref{f:ABC_my2_intr})
	could be replaced by similar upper bounds for the energies $\E^{+} (A,C)$, $\E^{\times} (A+\a,C)$, 
	see the second part of Corollary \ref{c:ABC_my} from section \ref{sec:small_prod}. 
	We call Theorem \ref{t:ABC_my_intr}  an  asymmetric sum--product result  because $A$ can be much larger than $B$ and $C$ 
	(say, $|A| > (|B|C|)^{100}$) in contrast with
	the usual  
	quadratic 
	restrictions 
	which 
	follow from 
	the classical Szemer\'{e}di--Trotter Theorem, see \cite{sz-t}, \cite{TV} for the real setting 
	and see \cite{BKT}, \cite{Garaev_survey}, \cite{Rudnev_pp} for the prime fields. 
	On the other hand, the roles of $B$, $C$ are not symmetric as well. 
	The thing is that the method of the proof intensively uses the fact that if $|AB|$ is small comparable to $|A|$, then, roughly speaking, 
	for any integer $k$ 
	size of $(kA)B$ is small comparable to $kA$, roughly speaking (rigorous formulation can be found in section \ref{sec:small_prod}).
	Of course this observation is not true more in any sense  if we replace $\times$ to $+$ and vice versa.

	Also, we obtain a "quantitative"\, version of Theorem \ref{t:Bourgain_T_k}.

	\begin{theorem}
		Let $A,B \subseteq \F_p$  be sets, $M\ge 1$ be a real number and 
		$|AB| \le M|A|$. 
		Then for any $k\ge 2$, 
		$2^{16 k} M^{2^{k+1}} C^2_* \log^8 |A| \le |B|^{}$,  
		one has 
		\begin{equation}\label{f:T_k_G_M_intr}
		\T^{+}_{2^k} (A) 
		\le 2^{4k+6} C_* \log^4 |A| \cdot \frac{M^{2^{k}}|A|^{2^{k+1}}}{p} +  16^{k^2} M^{2^{k+1}} C_*^{k-1}  \log^{4(k-1)} |A| \cdot 
		|A|^{2^{k+1} - 4} |B|^{-\frac{(k-1)}{2}} \E^{+} (A) \,.
		\end{equation}
		Here $C_*>0$ is an absolute constant.
		\label{t:T_k_G_M_intr}
	\end{theorem}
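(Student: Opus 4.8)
The plan is to prove the bound by induction on $k$, the key mechanism being a ``doubling'' step that passes from control of $\T^{+}_{2^{j}}(A)$ to control of $\T^{+}_{2^{j+1}}(A)$ while only mildly degrading the multiplicative structure. The base of the induction ($k=2$, so $\T^{+}_4(A)=\E^{+}_2(A)$ in the standard notation, or rather the $4$-fold additive energy) should follow from a direct application of the hypothesis $|AB|\le M|A|$ together with a Plünnecke--Ruzsa type inequality in the multiplicative setting: since $|AB|\le M|A|$, all iterated product sets $|A\cdots A \cdot B|$ are controlled by powers of $M$, and one extracts from this a Szemerédi--Trotter / point-line incidence bound over $\F_p$ (as in \cite{Rudnev_pp}, accounting for the $p^{-1}$ main term) to bound the number of additive quadruples in $A$ restricted to a multiplicatively structured environment. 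This is where the first summand $M^{2^k}|A|^{2^{k+1}}/p$ and the $\E^{+}(A)$-factor in the second summand enter.

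For the inductive step, I would write $\T^{+}_{2^{j+1}}(A)$ as (essentially) the additive energy of the $2^{j}$-fold iterated sumset-counting function: $\T^{+}_{2^{j+1}}(A) = \sum_{x} r^2_{2^j A}(x)$ expressed through convolutions, and then exploit that $r_{2^j A}$ is supported on a set whose product with $B$ is still small, of size at most (roughly) $M^{?}|2^jA|$. Here the crucial quantitative observation, flagged by the author in the introduction, is that smallness of $|AB|$ forces smallness of $|(kA)B|/|kA|$; combined with a Cauchy--Schwarz step that trades one factor of $|B|^{-1/2}$ for the passage from a ``level set of high additive popularity'' to the full count (this is the source of the $|B|^{-(k-1)/2}$ decay and the reason $|B|$ must be large), one closes the recursion. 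At each stage one picks up a factor of the shape $16^{\cdot} M^{2^{\cdot}} C_* \log^4|A|$, and summing the geometric-type contributions over the $k$ stages produces the stated constants $2^{4k+6}$, $16^{k^2}$, $C_*^{k-1}$, $\log^{4(k-1)}$. The condition $2^{16k} M^{2^{k+1}} C^2_* \log^8|A| \le |B|$ is exactly what guarantees that at every step the ``error'' term coming from the incidence bound does not overwhelm the gain, and that the level-set extraction loses at most a logarithmic factor.

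The main obstacle I expect is the bookkeeping in the inductive step: one must choose, at each level $j$, a dyadic popularity threshold for the representation function $r_{2^jA}$, restrict to the corresponding level set $P_j$, verify that $P_j B$ is still small (which uses $|AB|\le M|A|$ plus Plünnecke--Ruzsa in a multiplicative group, so $|P_j \cdot B| \le |A^{(2^j)} B| \le M^{2^j}|A^{(2^j)}| \le M^{2^j}\cdot M^{2^j -1}|A| = M^{2^{j+1}-1}|A|$ up to absorbing into constants — and then translate that into a bound on the number of incidences between $P_j$ and the lines $\{(x,y): xy = \lambda\}$), and then feed this into the Szemerédi--Trotter bound over $\F_p$ correctly tracking the $p^{-1}$ term. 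Getting the exponents of $M$, of $|B|$, and of $|A|$ to match the statement — in particular the clean $|A|^{2^{k+1}-4}$ and $|B|^{-(k-1)/2}$ — requires being careful that each doubling step squares the relevant $M$-power and subtracts a bounded amount from the $|A|$-exponent while contributing exactly $|B|^{-1/2}$. A secondary technical point is ensuring the logarithmic losses are genuinely $\log^4|A|$ per step and not $\log|A||B|$; this is controlled because $|B|\le |AB|\le M|A|$ so $\log|B| \ll \log M + \log|A|$, and $M$-dependence is tracked separately.
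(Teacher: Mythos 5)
Your overall architecture is the paper's: the proof of Theorem \ref{t:T_k_G_M} establishes the single recursion (\ref{f:T_2s,T_s_M}), roughly $\T^{+}_{2s}(A) \ll C_* s^4 M^{2s}\log^{4}|A|\cdot\big(|A|^{4s}/p + |A|^{2s}|B|^{-1/2}\,\T^{+}_{s}(A)\big)$, by a dyadic level-set decomposition of $r_{sA}$ followed by an application of Lemma \ref{l:AA_small_energy} (Rudnev's point--plane incidence Theorem \ref{t:Misha+}, not a Szemer\'edi--Trotter point--line bound) to the popular level set $P$, and then iterates it $(k-1)$ times; the recursion simply bottoms out at $\T^{+}_2(A)=\E^{+}(A)$, which is why that factor appears in the statement, and the hypothesis $2^{16k}M^{2^{k+1}}C_*^2\log^8|A|\le |B|$ is what absorbs the error terms at each stage. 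So your doubling scheme, the $|B|^{-1/2}$ gain per step, and the role of the largeness of $|B|$ are all as in the paper.

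There is, however, a genuine gap at the one point that makes the whole argument work: your justification that the level set $P_j\subseteq 2^{j}A$ has small product with $B$. The chain $|P_j\cdot B|\le |A^{(2^j)}B|\le M^{2^j}|A^{(2^j)}|\le M^{2^{j+1}-1}|A|$ is not available: $P_j$ lies in an iterated \emph{sumset}, Pl\"unnecke--Ruzsa in the multiplicative group says nothing about $|(2^jA)B|$, and $|2^jA|$ is certainly not bounded by $M^{2^j-1}|A|$ --- under the hypotheses the iterated sumsets of $A$ are in fact large, which is the very content of the theorem, so an argument mixing the two operations this way cannot hold. What the paper uses instead is the elementary weighted observation behind formula (\ref{f:PB}): if $r_{sA}(x)>\D$ and $b\in B$, then each representation $x=a_1+\dots+a_s$ gives $xb=a_1b+\dots+a_sb$ with $a_ib\in AB$, so $r_{sAB}(xb)>\D$; summing over $PB$ yields $\D|PB|\le \sum_y r_{sAB}(y)=|AB|^s\le M^s|A|^s$. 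Note this does \emph{not} say $|PB|/|P|$ is bounded by a power of $M$; the ratio $|PB|/|P|\le M^s|A|^s/(\D|P|)$ is fed into Lemma \ref{l:AA_small_energy}, and the resulting powers of $\D$ are removed using $|P|\D\le |A|^s$ and $|P|\D^2\le \T^{+}_s(A)$. Without this replacement your inductive step does not close; with it, your bookkeeping plan coincides with the paper's proof.
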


	As a by-product we obtain the best constants in the problem of estimating of the exponential sums over multiplicative subgroups \cite{Bourgain_DH}, \cite{Garaev_survey} and 
	relatively good bounds in the question on basis properties of  multiplicative subgroups \cite{GK}.
	Also, we find a wide series of "superquadratic expanders in $\R$" \cite{BR-NZ} with four variables, see Corollary \ref{c:superquadratic}.

	In contrast to paper \cite{Bourgain_more} we  prove 
	Theorem \ref{t:ABC_my_intr} and Theorem \ref{t:T_k_G_M_intr} 
	independently.
	We realise that Theorem \ref{t:ABC_my_intr} is equivalent to estimating another sort of energies, namely, 
	$$\E^{+}_k (A):= |\{ (a_1,\dots,a_k,a'_1, \dots, a'_k) \in A^{2k} ~:~ a_1-a'_1 = \dots = a_k - a'_k \}|$$ 
	(see the definitions in section \ref{sec:definitions}).
	Thus, a new feature of this paper is an upper bound for $\E^{+}_k (A)$ for sets $A$ with $|AB|\ll |A|$ for some large $B$, 
	see Theorem \ref{t:QG} below. 
	Such upper bound can be of independent interest. 
	
	\begin{theorem}
		Let $A,B \subseteq \F_p$ be two sets, $k\ge 0$ be an integer, and put $M:=|AB^{k+1}|/|A|$.  
		Then for any  $k\ge 0$  such that 
		$$|B|^{k/8+1/2} \ge |A| \cdot  M^{2^k+1} 2^{3k+1}  C^{(k+4)/4}_* \log^{k}  |AB^k| \,,$$ 
		where $C_* >0$ is an absolute constant, 
		we have 
		\begin{equation}\label{f:QG'_intr}
		\E^{+}_{2^{k+1}} (A) \le 2|AB^k|^{2^{k+1}} 
		\,.
		\end{equation}
		\label{t:QG_intr}
	\end{theorem}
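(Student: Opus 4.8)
The plan is to run an iterative energy-reduction argument on $A$, peeling off one factor of $B$ at each step and halving the order of the energy $\E^{+}_{2^{j}}(A)$. The starting point is a standard identity: $\E^{+}_{2m}(A)$ counts solutions of $a_1 - a'_1 = \dots = a_{2m} - a'_{2m}$, which can be organised along the value $s$ of the common difference, so that $\E^{+}_m(A) = \sum_s r(s)^m$ where $r(s) = |A \cap (A+s)|$. The key Cauchy--Schwarz/tensor-power step is that for $B$ with $|AB^{j+1}|\le M\,|AB^j|$ one can transfer an energy bound for $AB^j$ at level $2^{j+1}$ into an energy bound for $A$ at level $2^{j}$, because multiplication by a fixed $b\in B$ is injective and additive differences dilate: if $a - a' = s$ then $ab - a'b = sb$, so $r_{AB^j}(sb) \ge r_{A}(s)$ for every $b\in B$. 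Summing over $b\in B$ and over the $2^{j}$-th powers, together with the trivial bound $|AB^{j+1}|\le M|A|\cdot$(appropriate power), converts a high moment of $r_A$ into a lower moment times a loss controlled by $M$ and $|B|$.

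Concretely, first I would establish the base case $k=0$: here one just needs $\E^{+}_{2}(A) \le 2|A|^{2}$ under the hypothesis $|B|^{1/2} \ge |A| M^{2} 2\, C_*^{1/4}$, which should follow from a single application of the multiplicative-energy argument of section \ref{sec:small_prod} (the Szemer\'edi--Trotter-type or the ``$(kA)B$ is small'' lemma promised there), giving $\E^{+}(A)\le \E^{\times}(\dots)$-style control that collapses to the trivial-looking bound once $|B|$ is large enough. Then I would set up the induction on $k$: assuming $\E^{+}_{2^{k}}(AB) \le 2|AB^{k}|^{2^{k}}$ (applying the theorem with $A$ replaced by $AB$, $B$ replaced by $B$, and parameter $k-1$ — note $|(AB)B^{k}|=|AB^{k+1}|$, consistent with the definition of $M$), I would pass from level $2^{k}$ for $AB$ down to level $2^{k+1}$ for $A$. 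The passage uses: (i) the difference-dilation injection above to write $\E^{+}_{2^{k+1}}(A) = \sum_s r_A(s)^{2^{k+1}}$; (ii) Hölder in the form $\big(\sum_s r_A(s)^{2^{k+1}}\big)\cdot\big(\sum_s r_A(s)^{2^{k-1}}\big) \ge \big(\sum_s r_A(s)^{2^{k}}\big)^{2}$ is the wrong direction, so instead I would use that $\sum_{b,s} r_{AB}(sb)^{2^{k}} \le |B|\cdot \E^{+}_{2^{k}}(AB)$ while the left side is $\ge \sum_s |\{b : sb \text{ has large } r_{AB}\}|\cdot r_A(s)^{2^k}$; a dyadic pigeonhole on the size of $r_A(s)$ then yields $\E^{+}_{2^{k+1}}(A) \lesssim |B|^{-1}\log|AB^k| \cdot M^{O(2^k)} |AB^{k-1}|^{2^{k-1}}\cdot(\text{popular-difference count})$, and iterating the logarithmic losses across the $k$ steps produces the factor $C_*^{(k+4)/4}\log^{k}|AB^k|$ and the power $M^{2^k+1}$ in the hypothesis.

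The bookkeeping that makes the induction close is the hypothesis inequality $|B|^{k/8+1/2} \ge |A| M^{2^{k}+1} 2^{3k+1} C_*^{(k+4)/4}\log^{k}|AB^k|$: at each of the $k$ stages one spends roughly a factor $|B|^{1/8}$ (the exponent $k/8$ is the sum of these) to absorb one $M^{2^{j}}$-type loss and one logarithm, while the leading $|B|^{1/2}$ handles the base case as above; the constant $2^{3k+1}$ is the product of the $O(1)$ constants from the $k$ Cauchy--Schwarz/pigeonhole steps, and $C_*^{(k+4)/4}$ packages the absolute constant from the sum--product input raised to a power that grows linearly in $k$. One then checks that $2|AB^k|^{2^{k+1}}$ is exactly the bound one gets by substituting back, since the ``main term'' $|AB^{k-1}|^{2^{k-1}}$ is promoted to $|AB^k|^{2^{k}}$ in one step and the slack in the hypothesis ensures the error terms stay below the $|AB^k|^{2^{k+1}}$ threshold, with the factor $2$ absorbing the geometric series of smaller corrections.

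The main obstacle I anticipate is the direction of the Cauchy--Schwarz/Hölder inequalities in step (ii): naively relating $\E^{+}_{2^{k+1}}$ and $\E^{+}_{2^{k}}$ goes the wrong way (higher energies are not controlled by lower ones without an $L^\infty$ bound on $r_A$), so the argument must genuinely use the \emph{multiplicative} structure — that $B$ is large and $|AB^{k+1}|$ is small — to supply, via the dilation trick, the missing pointwise control $r_A(s) \le r_{AB^j}(sb)$ summed over many $b$. Getting the exponents of $|B|$, $M$, and the logarithm to line up precisely with \eqref{f:QG'_intr} and the stated hypothesis will require careful tracking of the dyadic pigeonhole levels through all $k$ iterations; this is where the bulk of the technical work lies, and it is presumably done in section \ref{sec:small_prod} via the lemma on smallness of $(kA)B$ referenced in the introduction.
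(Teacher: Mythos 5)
Your high-level skeleton matches the paper's: iterate a step that trades the energy $\E^{+}_{2^{k+1}}$ of $A$ for the lower-order energy $\E^{+}_{2^{k}}$ of $AB$ via the dilation inequality $r_{A-A}(s)\le r_{AB-AB}(sb)$ and a dyadic pigeonhole, with the exponent $k/8+1/2$ read as a gain of $|B|^{1/8}$ per step plus $|B|^{1/2}$ at the bottom (this is exactly how Theorem \ref{t:QG} is proved, via inequality (\ref{f:iteration_5.2}) iterated $k$ times and Lemma \ref{l:AA_small_energy} at the last step). However, there is a genuine gap at the heart of your inductive step: nothing in your step (ii) actually produces a power saving in $|B|$. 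The dilation trick summed over $b\in B$ gives $\sum_{s,b}r_{AB}(sb)^{2^{k}}\ge |B|\,\E^{+}_{2^{k}}(A)$, while for each fixed $b\neq 0$ one has $\sum_{s}r_{AB}(sb)^{2^{k}}=\E^{+}_{2^{k}}(AB)$, so this chain collapses to the trivial $\E^{+}_{2^{k}}(A)\le \E^{+}_{2^{k}}(AB)$; no amount of pigeonholing on the size of $r_A(s)$ turns this into your asserted bound with a factor $|B|^{-1}$, and you never pass from exponent $2^{k}$ to $2^{k+1}$ either. The missing ingredient is the geometric sum--product input: in the paper one first applies the Cauchy--Schwarz step of Lemma \ref{l:E_P_gen} to the popular level set $P$ of $r^{l}_{A-A}$, transfers to $AB$ by dilation, and then bounds $\sum_x r^2_{B(P-P)}(x)$ by Rudnev's point--plane incidence theorem (Theorem \ref{t:Misha+}, packaged as Corollary \ref{c:change_QG}); it is the term $|B|^{3/2}|P|^3$ there, combined with the relations $\D|P|\le\E^{+}_{l}$, $\D^2|P|\le\E^{+}_{2l}$, that yields the $|B|^{-1/8}$ per iteration, and Lemma \ref{l:AA_small_energy} that yields the final $|B|^{-1/2}$. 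You explicitly defer this to ``presumably section \ref{sec:small_prod}'', but it is the substance of the proof, not bookkeeping, and without it the induction does not close.

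Two smaller points. First, your base case and your final accounting ignore the terms with $p$ in the denominator: over $\F_p$ the incidence bound carries a main term $|P|^4/p$, and the paper needs the hypotheses $|Q\G^{k+1}||Q\G^{k}||\G|\le p^2$ and $|Q\G^{k}||\G|\le p$ (present in Theorem \ref{t:QG}, suppressed in the introduction's statement) together with a dichotomy of the form ``either the claimed bound holds or $\E^{+}_{s}(Q)\ge |Q|^{s+1}|\G|^{-\frac{1}{8}\log s-\frac12}$'' to dispose of them; any complete proof must address this. Second, your remark that higher energies cannot be controlled by lower ones is not quite the right obstruction (trivially $\E^{+}_{2m}(A)\le |A|^{m}\E^{+}_{m}(A)$); the real issue is that the only non-lossy passage known goes through the energy of the level set $P$, and bounding $\E^{+}(P)$ nontrivially is precisely where the multiplicative hypothesis and the incidence theorem enter.
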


	Our approach develops the ideas from \cite{Bourgain_more}, \cite{Shkr_H+L} (see especially section 4 from here) and uses several  sum--product observations of course. 
	We avoid to repeat  combinatorial arguments of Bourgain's paper \cite{Bourgain_more} (although   we use a similar inductive  strategy of the proof) 
	but the method 
	relies on recent geometrical sum--product bounds from Rudnev's article \cite{Rudnev_pp} and further papers as \cite{AMRS}, \cite{MPR-NRS}, \cite{RRS}, \cite{s_E_k} and others.
	In some sense we introduce a new 
	approach 
	of estimating moments $\oM_{k} (f)$ (e.g., $\T^{+}_k (H)$ in Theorem \ref{t:Bourgain_T_k} or $\E^{+}_k (A)$ in Theorem \ref{t:QG_intr}) 
	of some specific functions $f$ : instead of calculating $\oM_{k} (f)$ in terms of 
	suitable 
	norms of $f$, we comparing  $\oM_{k} (f)$ and $\oM_{k/2} (f)$.
	If  $\oM_{k} (f)$ is much less than $\oM_{k/2} (f)$, then  we use induction  and
	if not then thanks 
	some special nature of the function $f$ we deriving from this fact  that
	the additive energy $\E^{+}$ 
	of a level set of $f$ is huge and it gives a contradiction.  
	Clearly, this process can be applied at most $O(\log k)$ number of times and that is why we usually have logarithmic savings
	(compare the index in $\T^{+}_{2^k} (A)$ and the gain $|B|^{-\frac{(k-1)}{2}}$ in estimate (\ref{f:T_k_G_M_intr}), say).

	The paper is organized as follows.
	Section \ref{sec:definitions} contains all required definitions. 
	In section \ref{sec:preliminaries} we give a list of the results, which will be further used  in the text.
	In the next section we consider a particular case of multiplicative subgroups $\G$ and obtain an upper 
	estimate 
	for $\T^{+}_k (\G)$.
	It allows us to obtain new upper bounds for the exponential sums over subgroups which are the best at the moment. 
	This technique is developed in section \ref{sec:small_prod} although we 
	avoid to use the Fourier approach as was done in \cite{Bourgain_more} and in the previous section \ref{sec:proof}.  
	The last section \ref{sec:small_prod} contains all main Theorems \ref{t:ABC_my_intr}--\ref{t:QG_intr}.


	
	The author thanks Misha Rudnev and Sophie Stevens for careful reading of the first draft of this paper and for useful discussions.

\section{Notation}
\label{sec:definitions}


In this paper $p$ is an odd prime number, 
$\F_p = \Z/p\Z$ and $\F_p^* = \F_p \setminus \{0\}$. 
%
%
We denote the Fourier transform of a function  $f : \F_p \to \C$ by~$\FF{f},$
\begin{equation}\label{F:Fourier}
\FF{f}(\xi) =  \sum_{x \in \F_p} f(x) e( -\xi \cdot x) \,,
\end{equation}
where $e(x) = e^{2\pi i x/p}$. 
We rely on the following basic identities
\begin{equation}\label{F_Par}
\sum_{x\in \F_p} |f(x)|^2
=
\frac{1}{p} \sum_{\xi \in \F_p} \big|\widehat{f} (\xi)\big|^2 \,,
\end{equation}
\begin{equation}\label{svertka}
\sum_{y\in \F_p} \Big|\sum_{x\in \F_p} f(x) g(y-x) \Big|^2
= \frac{1}{p} \sum_{\xi \in \F_p} \big|\widehat{f} (\xi)\big|^2 \big|\widehat{g} (\xi)\big|^2 \,,
\end{equation}
and
\begin{equation}\label{f:inverse}
f(x) = \frac{1}{p} \sum_{\xi \in \F_p} \FF{f}(\xi) e(\xi \cdot x) \,.
\end{equation}
Let $f,g : \f_p \to \C$ be two functions.
Put
\begin{equation}\label{f:convolutions}
(f*g) (x) := \sum_{y\in \f_p} f(y) g(x-y) \quad \mbox{ and } \quad
(f\circ g) (x) := \sum_{y\in \f_p} \ov{f(y)} g(y+x)  \,.
\end{equation}
Then
\begin{equation}\label{f:F_svertka}
\FF{f*g} = \FF{f} \FF{g} \quad \mbox{ and } \quad 
\FF{f \circ g} 
= \ov{\FF{f}} \FF{g} \,.
\end{equation}
Put
$\E^{+}(A,B)$ for the {\it common additive energy} of two sets $A,B \subseteq \f_p$
(see, e.g., \cite{TV}), that is, 
$$
\E^{+} (A,B) = |\{ (a_1,a_2,b_1,b_2) \in A\times A \times B \times B ~:~ a_1+b_1 = a_2+b_2 \}| \,.
$$
If $A=B$ we simply write $\E^{+} (A)$ instead of $\E^{+} (A,A)$
and $\E^{+} (A)$ is called the {\it additive energy} in this case. 
Clearly,
\begin{equation*}\label{f:energy_convolution}
\E^{+} (A,B) = \sum_x (A*B) (x)^2 = \sum_x (A \circ B) (x)^2 = \sum_x (A \circ A) (x) (B \circ B) (x)
\end{equation*}
and by (\ref{svertka}),
\begin{equation}\label{f:energy_Fourier}
\E(A,B) = \frac{1}{p} \sum_{\xi} |\FF{A} (\xi)|^2 |\FF{B} (\xi)|^2 \,.
\end{equation}
Also, notice that
\begin{equation}\label{f:E_CS}
\E^{+} (A,B) \le \min \{ |A|^2 |B|, |B|^2 |A|, |A|^{3/2} |B|^{3/2} \} \,.
\end{equation}
In the same way define the {\it common multiplicative energy} of two sets $A,B \subseteq \f_p$
$$
\E^{\times} (A,B) =  |\{ (a_1,a_2,b_1,b_2) \in A\times A \times B \times B ~:~ a_1 b_1 = a_2 b_2 \}| \,. 
$$
Certainly, the multiplicative energy $\E^{\times} (A,B)$ can be expressed in terms of multiplicative convolutions 
similar to (\ref{f:convolutions}).

Sometimes we  use representation function notations like $r_{AB} (x)$ or $r_{A+B} (x)$, which counts the number of ways $x \in \F_p$ can be expressed as a product $ab$ or a sum $a+b$ with $a\in A$, $b\in B$, respectively. 
For example, $|A| = r_{A-A}(0)$ and  $\E^{+} (A) = r_{A+A-A-A}(0)=\sum_x r^2_{A+A} (x) = \sum_x r^2_{A-A} (x)$.  
In this paper we use the same letter to denote a set $A\subseteq \F_p$ and  its characteristic function $A: \F_p \to \{0,1 \}$. 
Thus, $r_{A+B} (x) = (A*B) (x)$, say.

Now consider two families of higher energies. Firstly, let
\begin{equation}\label{def:T_k}
\T^{+}_k (A) := |\{ (a_1,\dots,a_k,a'_1,\dots,a'_k) \in A^{2k} ~:~ a_1 + \dots + a_k = a'_1 + \dots + a'_k \}|
=
\frac{1}{p} \sum_{\xi} |\FF{A} (\xi)|^{2k}
\,.
\end{equation}
Secondly, for $k\ge 2$, we put 
\begin{equation}\label{def:E_k}
\E^{+}_k (A) = \sum_{x\in \F_p} (A\c A)(x)^k = \sum_{x\in \F_p} r_{A-A}^k (x) = 
\E^{+} (\Delta_k (A),A^{k}) \,,
\end{equation}
where 
$$
\Delta_k (A) := \{ (a,a, \dots, a)\in A^k \}\,.
$$ 
Thus, $\E^{+}_2 (A) = \T^{+}_2 (A)= \E^{+} (A)$. 
Also, notice that we always have $\E^{+}_k (A) \ge |A|^k$. 
Finally, let us remark that by definition (\ref{def:E_k}) one has $\E^{+}_1 (A) = |A|^2$.
Some results about the  properties of the energies $\E^{+}_k$ can be found in \cite{SS1}.
Sometimes we use $\T^{+}_k (f)$ and $\E^{+}_k (f)$ for an arbitrary function $f$
and the first  formula from   (\ref{def:E_k}) allows to define $\E^{+}_k (A)$ for any positive $k$.  
It was proved in \cite[Proposition 16]{s_E_k} that  $(\E^{+}_k (f))^{1/2k}$ is a norm for even $k$ and a real function $f$. 
The fact that $(\T^{+}_k (f))^{1/2k}$ is a norm is contained in \cite{TV}.

Let $A$ be a set. 
Put 
$$
R[A] := \left\{ \frac{a_1-a}{a_2-a} ~:~ a,a_1,a_2\in A,\, a_2 \neq a \right\}
$$
and 
$$
Q[A] := \left\{ \frac{a_1-a_2}{a_3-a_4} ~:~ a_1,a_2,a_3,a_4 \in A,\, a_3 \neq a_4 \right\} \,.
$$ 

\bigskip 

All logarithms are to base $2.$ The signs $\ll$ and $\gg$ are the usual Vinogradov symbols.
When the constants in the signs  depend on some parameter $M$, we write $\ll_M$ and $\gg_M$. 
For a positive integer $n,$ we set $[n]=\{1,\ldots,n\}.$ 

\section{Preliminaries}
\label{sec:preliminaries}




We begin with a variation on the famous Pl\"{u}nnecke--Ruzsa inequality, see \cite[Chapter 1]{Ruzsa_book}.  

\begin{lemma}
	Let $\Gr$ be a commutative group. 
	Also, let $A,B_1, \dots, B_h \subseteq \Gr$, $|A+B_j| = \a_j |A|$, $j \in [h]$.
	Then there is a non-empty set  $X\subseteq A$ such that 
	\begin{equation}\label{f:Plunnecke_R}
	|X+B_1+\dots+B_h| \le \a_1 \dots \a_h |X| \,.
	\end{equation}
	Further for any $0< \d < 1$ there is $X \subseteq A$ such that $|X| \ge (1-\d) |A|$ and 
	\begin{equation}\label{f:Plunnecke_XR}
	|X+B_1+\dots+B_h| \le \d^{-h} \a_1 \dots \a_h |X| \,.
	\end{equation}
	\label{l:Plunnecke_R}
\end{lemma}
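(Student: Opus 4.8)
The statement is the Pl\"unnecke--Ruzsa inequality for distinct summands together with its standard ``large subset'' refinement. The plan is to establish the first estimate by Petridis's most-efficient-subset method (this is essentially the form recorded in \cite[Chapter 1]{Ruzsa_book}), and then to deduce the second estimate from the first by extracting a maximal good subset.

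For the first part I would induct on $h$. The base case $h=1$ is immediate with $X=A$, since then $|X+B_1|=\alpha_1|X|$. For the inductive step, choose a nonempty $X\subseteq A$ minimising $|Z+B_h|/|Z|$ over all nonempty $Z\subseteq A$, and let $K$ be this minimum; since $A$ competes in the minimisation, $K\le|A+B_h|/|A|=\alpha_h$. The crucial input is Petridis's covering lemma: for this particular $X$ one has $|X+B_h+C|\le K|X+C|$ for \emph{every} finite set $C$. Taking $C=B_1+\dots+B_{h-1}$ gives
\[
|X+B_1+\dots+B_h|\ \le\ K\,|X+B_1+\dots+B_{h-1}|\ \le\ \alpha_h\,|X+B_1+\dots+B_{h-1}|\,,
\]
and it remains to bound $|X+B_1+\dots+B_{h-1}|$ by $\alpha_1\cdots\alpha_{h-1}|X|$. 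I expect this to be the main technical point: one cannot simply invoke the inductive hypothesis with $X$ in place of $A$, because passing to the subset $X\subseteq A$ may enlarge the doubling constants $|X+B_j|/|X|$; retaining the product $\alpha_1\cdots\alpha_{h-1}$ (rather than a power of a single ratio) across this shrinking is exactly what the Pl\"unnecke layered-graph argument, respectively Petridis's iteration of the covering lemma, is designed to achieve, and I would follow that route as in \cite[Chapter 1]{Ruzsa_book}.

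For the second part I would use a maximality argument. Let $\mathcal X$ be the family of nonempty $X\subseteq A$ with $|X+B_1+\dots+B_h|\le\delta^{-h}\alpha_1\cdots\alpha_h|X|$; it is nonempty by the first part. Choose $X\in\mathcal X$ of maximal cardinality and suppose, for contradiction, that $|X|<(1-\delta)|A|$. Then $A':=A\setminus X$ satisfies $|A'|>\delta|A|$, so each ratio $|A'+B_j|/|A'|\le|A+B_j|/|A'|=\alpha_j|A|/|A'|<\alpha_j/\delta$. Applying the first part to $A'$ yields a nonempty $Y\subseteq A'$ with $|Y+B_1+\dots+B_h|\le\delta^{-h}\alpha_1\cdots\alpha_h|Y|$. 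Since $X$ and $Y$ are disjoint, $(X\cup Y)+B_1+\dots+B_h=(X+B_1+\dots+B_h)\cup(Y+B_1+\dots+B_h)$, so $X\cup Y\in\mathcal X$ while $|X\cup Y|>|X|$, contradicting maximality. Hence $|X|\ge(1-\delta)|A|$, and membership of $X$ in $\mathcal X$ gives the claimed bound. Apart from the distinct-summand bookkeeping in the first part flagged above, every step here is elementary.
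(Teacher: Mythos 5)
Your second half is fine: the maximality argument (take a largest $X\subseteq A$ satisfying (\ref{f:Plunnecke_XR}), note that if $|X|<(1-\d)|A|$ then $A'=A\setminus X$ has $|A'|>\d|A|$, hence $|A'+B_j|/|A'|<\a_j/\d$, apply the first part to $A'$ and absorb the resulting $Y$ into $X$ using disjointness) is the standard and correct derivation of the large-subset refinement from the basic inequality, and it is at least as much detail as the paper itself supplies, since the paper states the whole lemma as a known preliminary with a reference to \cite[Chapter 1]{Ruzsa_book} and gives no proof.

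The genuine gap is exactly the one you flag in the first part, and it is not a mere bookkeeping issue. After choosing $X\subseteq A$ minimising $|Z+B_h|/|Z|$ and applying the covering lemma with $C=B_1+\dots+B_{h-1}$, you still need $|X+B_1+\dots+B_{h-1}|\le \a_1\cdots\a_{h-1}|X|$, and this is not an instance of your inductive hypothesis: it would have to be applied to $X$, whereas the ratios $|X+B_j|/|X|$ (indeed even the minima of $|Z+B_j|/|Z|$ over $\emptyset\neq Z\subseteq X$) may well exceed $\a_j$, because the minimiser for $B_h$ need not be good for the other summands. The same obstruction reappears in every naive rearrangement (e.g.\ minimising for $B_1+\dots+B_{h-1}$ first and then needing $|X+B_h|\le\a_h|X|$), so ``iterating the covering lemma'' as sketched does not close the induction; the distinct-summand inequality (\ref{f:Plunnecke_R}) genuinely requires the additional input, namely Ruzsa's graph-theoretic argument (Pl\"unnecke's inequality for commutative layered graphs plus the construction handling different summands, which is the Chapter~1 material the paper cites) or Petridis's own, differently organised treatment of several summands. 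Deferring this to \cite{Ruzsa_book} is legitimate here, exactly as the paper does, but you should be clear that as written your argument proves (\ref{f:Plunnecke_XR}) from (\ref{f:Plunnecke_R}), while (\ref{f:Plunnecke_R}) itself is quoted, not proved.
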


We need a result from \cite{Rudnev_pp} or see \cite[Theorem 8]{MPR-NRS}.
By the number of point--planes incidences $\mathcal{I} (\mathcal{P}, \Pi)$ between a set of points 
$\mathcal{P} \subseteq \F_p^3$ and a collection of planes $\Pi$  in $\F_p^3$ we mean 
$$
\mathcal{I} (\mathcal{P}, \Pi) := |\{ (p,\pi) \in \mathcal{P} \times \Pi ~:~ p \in \pi \}| \,.
$$

\begin{theorem}
	Let $p$ be an odd prime, $\mathcal{P} \subseteq \F_p^3$ be a set of points and $\Pi$ be a collection of planes in $\F_p^3$. Suppose that $|\mathcal{P}| = |\Pi|$ and that $k$ is the maximum number of collinear points in $\mathcal{P}$. 
	Then the number of point--planes incidences satisfies 
	\begin{equation}\label{f:Misha+}
	\mathcal{I} (\mathcal{P}, \Pi)  \ll \frac{|\mathcal{P}|^2}{p} + |\mathcal{P}|^{3/2} + k |\mathcal{P}| \,.	
	\end{equation}
	\label{t:Misha+}	
\end{theorem}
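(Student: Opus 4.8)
The plan is to prove Theorem~\ref{t:Misha+} following the strategy of Rudnev \cite{Rudnev_pp} (see also \cite[Theorem 8]{MPR-NRS}). First I would projectivise, embedding $\mathcal{P}$ and $\Pi$ into ${\rm PG}(3,\F_p)$, where points and planes are dual and the incidence relation is symmetric; adjoining dummy objects we may assume $|\mathcal{P}|=|\Pi|=N$, while the few genuinely degenerate configurations (all of $\mathcal{P}$ on one line, or one plane carrying almost all of $\mathcal{P}$) are checked directly and are consistent with the claimed bound. In the generic case, writing $i(\pi):=|\mathcal{P}\cap\pi|$, Cauchy--Schwarz over $\pi\in\Pi$ gives $\mathcal{I}(\mathcal{P},\Pi)^2\le N\sum_{\pi}i(\pi)^2$, so it suffices to show
$$ \sum_{\pi\in\Pi} i(\pi)^2 \;\ll\; \frac{N^3}{p^2}+N^2+k^2N \,. $$
Expanding the square, $\sum_\pi i(\pi)^2=\mathcal{I}(\mathcal{P},\Pi)+\sum_{\ell}t_\ell(t_\ell-1)m(\ell)$, where $\ell$ ranges over projective lines, $t_\ell:=|\mathcal{P}\cap\ell|$ and $m(\ell):=|\{\pi\in\Pi:\ell\subseteq\pi\}|$; the diagonal term is $\le N^2$ and harmless.

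Only the connecting lines ($t_\ell\ge 2$) contribute, and there are $\ll N^2/\tau^2$ lines with $t_\ell\ge\tau$ (every pair of points spans a unique line). Partitioning dyadically by $t_\ell\sim 2^j$,
$$ \sum_\ell t_\ell(t_\ell-1)m(\ell)\;\ll\;\sum_{j\ge 1}4^j\,\mathcal{J}_j\,,\qquad \mathcal{J}_j:=\sum_{\ell:\,t_\ell\sim 2^j}m(\ell)\,, $$
so $\mathcal{J}_j$ counts incidences between a family of $\ll N^2/4^j$ ``rich'' lines and the $N$ planes of $\Pi$. Now I would invoke the Pl\"ucker/Klein correspondence: the lines of ${\rm PG}(3,\F_p)$ become the points of the Klein quadric $\K\subseteq{\rm PG}(5,\F_p)$, and the relation $\ell\subseteq\pi$ becomes the relation ``$[\ell]$ lies on the generator $2$-plane $\beta_\pi\subset\K$''. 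Hence $\mathcal{J}_j$ is an incidence count, inside the four-dimensional quadric $\K$, between $\ll N^2/4^j$ of its points and $N$ of its $\beta$-generators.

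The remaining --- and hardest --- task is to bound this quadric incidence count without a spurious factor $p$. Exploiting the ruled structure of $\K$, one descends by repeated generic hyperplane sections and projections to a point--line incidence problem in $\F_p^2$, where the elementary bounds $I(P,L)\ll|P||L|/p+\sqrt{p\,|P||L|}$ and $I(P,L)\ll|L|+|P||L|^{1/2}$ are available, and the parameters can be arranged so that these yield what is needed. The sole obstruction to a clean planar estimate is a degenerate pencil --- a family of lines of ${\rm PG}(3,\F_p)$ all through a fixed point or all lying in a fixed plane, equivalently a collinear set of points of $\K$ --- and a pencil that is heavy in $\mathcal{J}_j$ forces $\gg 2^j$ points of $\mathcal{P}$ to be collinear, hence contributes only $O(kN)$. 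Reassembling the planar bounds, summing the $O(\log N)$ dyadic ranges in $j$, and substituting back into the Cauchy--Schwarz of the first step yields $\mathcal{I}(\mathcal{P},\Pi)\ll N^2/p+N^{3/2}+kN$, which is \eqref{f:Misha+}.

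I expect the main obstacle to be exactly this last descent: arranging the passage from the four-dimensional quadric $\K$ down to $\F_p^2$ so that no generators degenerate and the intermediate point counts stay controlled, and --- most delicately --- tracking the dependence on the maximal number $k$ of collinear points so that it emerges precisely as the single term $k|\mathcal{P}|$ rather than anything coarser. By contrast, the projectivisation, the two applications of Cauchy--Schwarz, the dyadic splitting, and the final summation are all routine.
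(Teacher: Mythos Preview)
The paper does not prove this statement at all: Theorem~\ref{t:Misha+} is quoted as a known result (``We need a result from \cite{Rudnev_pp} or see \cite[Theorem 8]{MPR-NRS}'') and is used throughout as a black box. So there is no ``paper's own proof'' to compare against; the honest comparison is with Rudnev's argument in \cite{Rudnev_pp}.

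Against that benchmark, your outline diverges from the actual proof in a way that matters. Rudnev does \emph{not} start with Cauchy--Schwarz and then try to bound $\sum_\pi i(\pi)^2$; that second moment is at least as hard as the original incidence count, and the planar bounds you propose to invoke at the bottom --- Vinh's $I(P,L)\ll |P||L|/p+(p|P||L|)^{1/2}$ and the trivial $|L|+|P||L|^{1/2}$ --- are too weak to recover the exponent $3/2$ without a factor of $p$. The published argument instead maps each point $q\in\mathcal{P}$ and each plane $\pi\in\Pi$ to a line in $\F_p^3$ so that $q\in\pi$ becomes ``the two lines meet'', and then bounds the number of pairwise intersections among $2N$ lines in three dimensions via the Guth--Katz/Koll\'ar polynomial method adapted to positive characteristic. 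The collinearity parameter $k$ enters because a bundle of many concurrent lines on one side forces many collinear points in $\mathcal{P}$, and it is the constraint $N\le p^2$ (implicit in the term $N^2/p$) that lets the polynomial-method bound go through over $\F_p$.

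Your Klein-quadric picture is a correct rewriting of the line--plane relation, but the step you flag as the obstacle --- ``descend by repeated generic hyperplane sections and projections to a point--line problem in $\F_p^2$'' --- is not something that is known to work with the required exponents, and is not what \cite{Rudnev_pp} does. If you want a self-contained proof, the honest route is the three-dimensional line-incidence bound; otherwise, do what the paper does and cite \cite{Rudnev_pp} or \cite[Theorem~8]{MPR-NRS}.
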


Notice that in $\R$ we do not need in the first term in estimate (\ref{f:Misha+}).

Let us derive a consequence of Theorem \ref{t:Misha+}.

\begin{lemma}
	Let $A,Q\subseteq \F_p$ be two sets, $A,Q\neq \{0\}$, $M\ge 1$ be a real number, and
	$|QA|\le M|Q|$. 
	Then
	\begin{equation}\label{f:AA_small_energy}
	\E^{+} (Q) \le C_* \left( \frac{M^2 |Q|^4}{p} + \frac{M^{3/2} |Q|^3}{|A|^{1/2}} \right)  \,,
	\end{equation} 
	where $C_* \ge 1$ is an absolute constant. 
	\label{l:AA_small_energy}
\end{lemma}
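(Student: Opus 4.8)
The plan is to realise the additive energy $\E^{+}(Q)$ as a point–plane incidence count and then invoke Theorem \ref{t:Misha+}. First I would write
$$
\E^{+} (Q) = |\{ (q_1,q_2,q_3,q_4) \in Q^4 ~:~ q_1 - q_2 = q_3 - q_4 \}| \,,
$$
and the standard trick is to introduce an auxiliary multiplicative variable coming from the hypothesis $|QA|\le M|Q|$. Fix an element $a \in A$ (or rather run over all $a\in A$) and rewrite the equation $q_1-q_2 = q_3-q_4$ in a way that involves the products $q_i a$. Concretely, the equation $q_1 - q_3 = q_2 - q_4$ can be "lifted" by multiplying through by elements of $A$: one counts solutions of $a(q_1 - q_3) = a(q_2 - q_4)$, which for each fixed $a$ recovers $\E^{+}(Q)$, so summing over $a\in A$ gives $|A|\cdot \E^{+}(Q)$ on one side. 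The point is to reorganise the resulting $5$-variable count $(q_1,q_2,q_3,q_4,a)$ so that one group of variables indexes points in $\F_p^3$ and another indexes planes.

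The key step is the incidence setup. I would take the point set to be (a subset of) $(Q-Q)\times Q \times A$ or a similar product, and the planes to be parametrised by triples from $QA\times Q \times A$; the defining linear relation among the five free variables becomes the incidence "point lies on plane". One then has $|\mathcal P|, |\Pi| \ll |Q-Q|\cdot|Q|\cdot|A| \ll M|Q|^2|A|$ up to the appropriate normalisations, and crucially the number $k$ of collinear points is controlled by $|QA| \le M|Q|$ or by $|A|$ — this is where the hypothesis $|QA|\le M|Q|$ enters a second time, bounding the richest line. Plugging these sizes into \eqref{f:Misha+},
$$
|A|\cdot \E^{+}(Q) \ll \mathcal{I}(\mathcal P, \Pi) \ll \frac{|\mathcal P|^2}{p} + |\mathcal P|^{3/2} + k|\mathcal P| \,,
$$
and after substituting $|\mathcal P| \asymp M|Q|^2|A|$ and $k \asymp M|Q|$ (or $k\asymp |A|$, whichever the construction gives), dividing by $|A|$ and simplifying should yield exactly
$$
\E^{+}(Q) \le C_*\left( \frac{M^2|Q|^4}{p} + \frac{M^{3/2}|Q|^3}{|A|^{1/2}} \right) \,,
$$
with the middle term $|\mathcal P|^{3/2}/|A|$ producing the $M^{3/2}|Q|^3|A|^{-1/2}$ contribution and the collinear term being dominated by it under the constraint that $A\ne\{0\}$.

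The main obstacle I expect is getting the incidence geometry set up correctly: one must choose the points, planes, and the linear form so that (i) the incidence count is genuinely $\ge |A|\cdot\E^{+}(Q)$ (no loss of solutions, and a bounded-multiplicity parametrisation so we don't overcount), and (ii) the number of collinear points $k$ is provably small — a line in $\F_p^3$ carrying many points of $\mathcal P$ would have to correspond to many coincidences among the $q_i a$, and bounding this by $M|Q|$ or $|A|$ requires tracing through what collinearity means in the chosen coordinates. This kind of construction is exactly the Rudnev-style argument from \cite{Rudnev_pp}, \cite{MPR-NRS}; the bookkeeping of which product set ($QA$) supplies the plane coefficients and which difference set ($Q-Q$) supplies the points, together with verifying the line bound, is the delicate part, while the final arithmetic of substituting sizes is routine.
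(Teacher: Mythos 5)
Your high-level plan --- lift the energy equation using the hypothesis $|QA|\le M|Q|$ and count point--plane incidences via Theorem \ref{t:Misha+} --- is indeed the paper's route, but the concrete lift you commit to does not work, and the arithmetic you sketch cannot give the stated bound. Multiplying the whole equation $q_1-q_3=q_2-q_4$ by a single $a\in A$ is a bijective rescaling: it gains only one factor of $|A|$ on the left, while the point/plane families you propose have size about $M|Q|^2|A|$. Plugging in, the middle incidence term gives $|\mathcal{P}|^{3/2}/|A|\asymp M^{3/2}|Q|^3|A|^{1/2}$ and the first gives $M^{2}|Q|^4|A|/p$, each off by a full factor of $|A|$ from \eqref{f:AA_small_energy}, so the substitution you claim ``yields exactly'' the lemma in fact does not. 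Moreover, your suggested point set $(Q-Q)\times Q\times A$ with the bound $|Q-Q|\ll M|Q|$ confuses the difference set with the product set: the hypothesis controls $|QA|$ only, and $|Q-Q|$ can be as large as $|Q|^2$ however small $M$ is.

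The missing idea is to use \emph{two independent} multiplicative variables, one on each side. Writing a solution of $q_1+q_2=q_3+q_4$ as $q_1+\tilde{q}_2/a=q_3+\tilde{q}_4/a'$ with $\tilde{q}_2=q_2a,\ \tilde{q}_4=q_4a'\in QA$ and $a,a'\in A_*:=A\setminus\{0\}$, every original solution lifts in at least $|A_*|^2$ ways, so $|A_*|^{2}\,\E^{+}(Q)$ is at most the number of incidences between the points $\mathcal{P}=Q\times QA\times A_*^{-1}$ and the symmetric family of planes, with $|\mathcal{P}|=|\Pi|=|A_*||Q||QA|\le M|A||Q|^2$. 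Then
$$
\frac{|\mathcal{P}|^{3/2}}{|A|^{2}}\asymp \frac{M^{3/2}|Q|^{3}}{|A|^{1/2}}\,,\qquad \frac{|\mathcal{P}|^{2}}{p|A|^{2}}\asymp \frac{M^{2}|Q|^{4}}{p}\,,
$$
which is exactly the lemma: the gain of $|A|^{2}$ against a point set of size only $M|A||Q|^2$ is what produces the $|A|^{-1/2}$ saving. Finally, since $\mathcal{P}$ is a Cartesian product, the maximal number of collinear points is $k=|QA|$ (this is where $A,Q\neq\{0\}$ is used), and the term $k|\mathcal{P}|$ is absorbed into $|\mathcal{P}|^{3/2}$ via the trivial inequality $|QA|\le|Q||A|$, not merely because $A\neq\{0\}$ as you suggest.
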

\begin{proof}
	Put $A=A\setminus \{0\}$.
	We have
	$$
	\E^{+} (Q) = |\{ q_1+q_2=q_3+q_4 ~:~ q_1,q_2,q_3,q_4\in Q\}| 
	\le
	$$
	$$
	\le
	|A_*|^{-2} |\{ q_1+\t{q}_2/a=q_3+\t{q}_4/a' ~:~  q_1,q_3 \in Q,\, \t{q}_2,\t{q}_4 \in QA,\, a,a'\in A_* \}| \,. 
	$$
	The number of the solutions to the last equation can be interpreted as the number of incidences between the set of  points 
	$\mathcal{P} = Q\times QA \times A_*^{-1}$ and planes $\Pi$ with $|\mathcal{P}| = |\Pi| = |A_*||Q||QA|$. Here $k = |QA|$ because $A,Q\neq \{0\}$.
	Using Theorem \ref{t:Misha+} an a trivial inequality $|QA| \le |Q||A|$, we obtain
	$$
	\E^{+} (Q) \ll |A|^{-2} \left( \frac{|A|^2 |Q|^2 |QA|^2}{p} + |Q|^{3/2} |QA|^{3/2} |A|^{3/2} \right)
	\ll
	\frac{M^2 |Q|^4}{p} + \frac{M^{3/2} |Q|^3}{|A|^{1/2}} 
	$$
	as required. 
	$\hfill\Box$
\end{proof}



\bigskip

Finally, we need a combinatorial 

\begin{lemma}
	Let $\Gr$ be a finite abealian group, $A,P$ be subsets of $\Gr$.
	Then for any $k\ge 1$ one has 
	\begin{equation}\label{f:E_P_gen}
	\left( \sum_{x\in P} r^k_{A-A} (x)  \right)^2 \le |A|^{k} \sum_x r^k_{A-A} (x) r_{P-P} (x)  \,.
	\end{equation}
	In particular, 
	\begin{equation}\label{f:E_P_gen+}
	\left( \sum_{x\in P} r^k_{A-A} (x)  \right)^4 \le |A|^{2k} \E^{+}_{2k} (A) \E^{+} (P) \,.
	\end{equation} 
	\label{l:E_P_gen}
\end{lemma}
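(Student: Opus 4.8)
The plan is to prove \eqref{f:E_P_gen} first by an application of the Cauchy--Schwarz inequality, and then to derive \eqref{f:E_P_gen+} from it by a second application of Cauchy--Schwarz together with the identities relating $\E^{+}_{2k}$ and $\E^{+}$ to sums of powers of representation functions. Everything is based on the observation that $r_{A-A}(x) = (A\c A)(x)$ is supported on $A-A$ and satisfies $\sum_x r_{A-A}(x) = |A|^2$, and more usefully that $r_{A-A}^{k}(x)$ can be written as a convolution-type sum which, when unfolded, lets one introduce a difference variable from $P$.

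For \eqref{f:E_P_gen} I would start from the left-hand side and write, for each $x\in P$, the quantity $r^k_{A-A}(x)$ as the number of $k$-tuples of pairs $(a_i,a'_i)\in A\times A$ with $a_i - a'_i = x$ for all $i$. Summing over $x\in P$ and using that $x\in P$, the total count equals $\E^{+}(\Delta_k(A), A^k \text{ restricted so the common difference lies in } P)$; concretely it is $\sum_{x} r^k_{A-A}(x) P(x)$. Now I apply Cauchy--Schwarz in the variable $x$, splitting $r^k_{A-A}(x)P(x) = \big(r^{k/2}_{A-A}(x) P(x)\big)\cdot r^{k/2}_{A-A}(x)$ — but the cleaner route is the standard ``popularity/tensor'' trick: interpret $\sum_{x\in P} r^k_{A-A}(x)$ as $\sum_{a\in A}\#\{(a_1,\dots,a_k,a'_1,\dots,a'_k)\in A^{2k} : a_i - a'_i \in P,\ \text{all equal}\}$ after fixing one coordinate, and then Cauchy--Schwarz over that fixed coordinate against the remaining $2k-1$ free coordinates produces the factor $|A|$ per deleted coordinate — but to get exactly $|A|^k$ one should instead Cauchy--Schwarz so as to symmetrize: write the left side as $\langle f, g\rangle$ where $f(x) = r_{A-A}^{k}(x)P(x)$ and use $\sum_x r^k_{A-A}(x) P(x) = \sum_{x} r^{k}_{A-A}(x) (P*P^{-})(x)^{1/2}\cdot(\dots)$ — I expect the intended argument is: bound $\big(\sum_{x\in P} r^k_{A-A}(x)\big)^2 = \big(\sum_x r^k_{A-A}(x) P(x)\big)^2 \le \big(\sum_x r^k_{A-A}(x)\big)\big(\sum_x r^k_{A-A}(x) r_{P-P}(x)\big)$ is NOT quite it either since $\sum_x r^k_{A-A}(x) = \E^{+}_k(A)$ not $|A|^k$. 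The correct move is to pull out one copy of the ``trivial'' bound $r_{A-A}(x)\le |A|$ from $k$ of the $2k$ sheets after doubling: square the left-hand side, obtaining a count with $2k$ equal differences $x$ and $2k$ equal differences $y$ all lying in $P$; fix the pairing so that $x-y$ ranges over $P-P$; for $k$ of the sheets bound the difference-pair count by $|A|$ (i.e.\ $r_{A-A}\le|A|$), leaving $k$ sheets giving $r^k_{A-A}(x-y)$ summed against $r_{P-P}(x-y)$. This yields exactly \eqref{f:E_P_gen}.

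For \eqref{f:E_P_gen+} I would take \eqref{f:E_P_gen}, square it, and apply Cauchy--Schwarz to the right-hand side sum $\sum_x r^k_{A-A}(x) r_{P-P}(x)$, splitting it as $\sum_x \big(r^k_{A-A}(x)\big)\cdot\big(r_{P-P}(x)\big) \le \big(\sum_x r^{2k}_{A-A}(x)\big)^{1/2}\big(\sum_x r^2_{P-P}(x)\big)^{1/2} = \big(\E^{+}_{2k}(A)\big)^{1/2}\big(\E^{+}(P)\big)^{1/2}$, using $\sum_x r^{2k}_{A-A}(x) = \E^{+}_{2k}(A)$ from \eqref{def:E_k} and $\sum_x r^2_{P-P}(x) = \E^{+}(P)$. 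Raising \eqref{f:E_P_gen} to the fourth power and substituting this bound gives $\big(\sum_{x\in P} r^k_{A-A}(x)\big)^4 \le |A|^{2k}\,\E^{+}_{2k}(A)\,\E^{+}(P)$ as claimed.

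The main obstacle is getting the exponent on $|A|$ exactly right in \eqref{f:E_P_gen} rather than something like $\E^{+}_k(A)$ or $|A|^{2k}$: one must be careful to apply the trivial bound $r_{A-A}(x)\le|A|$ on precisely the right number of ``sheets'' (namely $k$ of them) after the symmetrizing Cauchy--Schwarz, and to organize the difference variable so that the surviving sum is genuinely indexed by $P-P$ with weight $r_{P-P}$. Once the bookkeeping of which coordinates are bounded trivially and which are kept is fixed, both inequalities are routine; the second statement is a purely formal consequence of the first.
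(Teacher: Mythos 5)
Your derivation of the second inequality (\ref{f:E_P_gen+}) from the first is correct and is exactly the paper's step: Cauchy--Schwarz applied to $\sum_x r^k_{A-A}(x)r_{P-P}(x)$ gives $(\E^{+}_{2k}(A))^{1/2}(\E^{+}(P))^{1/2}$, and raising (\ref{f:E_P_gen}) to the square finishes it. The problem is your proof of the main inequality (\ref{f:E_P_gen}) itself. After several discarded attempts you settle on: square the left-hand side, regard $\sum_{x,y\in P} r^k_{A-A}(x)\,r^k_{A-A}(y)$ as a count on $2k$ ``sheets'' ($k$ with difference $x$, $k$ with difference $y$ --- not $2k$ and $2k$ as written), bound $k$ of the sheets trivially by $|A|$, and claim the remaining sheets produce $r^k_{A-A}(x-y)$ weighted by $r_{P-P}$. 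There is no mechanism for that last step: in the product $r^k_{A-A}(x)\,r^k_{A-A}(y)$ the representations of $x$ and of $y$ are completely independent, so deleting sheets can only leave factors of the form $r^{j}_{A-A}(x)\,r^{k-j}_{A-A}(y)$; to turn an $x$-sheet and a $y$-sheet into a representation of $x-y$ by elements of $A$ they would have to share an element of $A$, and that coupling must be manufactured by a Cauchy--Schwarz over a common variable, not by the trivial bound $r_{A-A}\le|A|$. Indeed the pointwise estimate your bookkeeping implicitly needs, $r_{A-A}(x)\,r_{A-A}(y)\le |A|\,r_{A-A}(x-y)$, is false in general: take $A$ the union of an arithmetic progression of step $1$ and one of a large generic step $d$, with $x=1$, $y=d$; then the left side is $\gg |A|^2$ while $r_{A-A}(x-y)$ can be $0$. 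So even the case $k=1$ of your argument does not close, and (independent sheets only give differences of elements of $A-A$, i.e. something like $r_{(A-A)-(A-A)}$, not $r_{A-A}$).

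The missing idea is to introduce the shared variable \emph{before} squaring. Write $\sum_{x\in P} r^k_{A-A}(x)=\sum_{a_1,\dots,a_k\in A}|P\cap(A-a_1)\cap\dots\cap(A-a_k)|$ and apply Cauchy--Schwarz over the $k$-tuple $(a_1,\dots,a_k)\in A^k$: the factor $|A|^k$ is simply the number of tuples, and in the squared term the two copies of $P$ see the \emph{same} tuple, so for fixed $p,p'\in P$ the sum over admissible $a_i$ is at most $\prod_{i}|(A-p)\cap(A-p')|=r^k_{A-A}(p-p')$, whence the right-hand side is $|A|^k\sum_x r^k_{A-A}(x)\,r_{P-P}(x)$. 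This is the paper's two-line proof; once Cauchy--Schwarz is applied in this variable, the question of ``which sheets to bound trivially'' disappears, since the exponent $k$ on $|A|$ comes from the size of the tuple space rather than from discarded sheets.
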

\begin{proof}
	Clearly, inequality (\ref{f:E_P_gen+}) follows from (\ref{f:E_P_gen}) by the Cauchy--Schwarz inequality.
	To prove estimate (\ref{f:E_P_gen}), we observe that 
	$$
	\left( \sum_{x\in P} r^k_{A-A} (x)  \right)^2
	=  \left( \sum_{x_1,\dots,x_k\in A} |P \cap (A - x_1) \cap \dots \cap (A - x_k)| \right)^2
	\le
	$$
	$$
	\le
	|A|^k \sum_{x_1,\dots,x_k} |P \cap (A - x_1) \cap \dots \cap (A - x_k)|^2
	=
	|A|^k \sum_x r_{P-P} (x) r^k_{A-A} (x)  
	$$
	as required. 
	$\hfill\Box$
\end{proof}

\bigskip 

Combining Theorem \ref{t:Misha+} and Lemma \ref{l:E_P_gen}, we obtain 

\begin{corollary}
	Let $A \subseteq \F_p$, and $B, P\subseteq \F^*_p$ be sets.
	Then for any $k\ge 1$ one has 
	\begin{equation}
	\left( \sum_{x\in P} r^k_{A-A} (x)  \right)^4 \le C_* |A|^{2k} \E^{+}_{2k} (AB) 
	\left( \frac{|P|^4}{p} + \frac{|P|^3}{|B|^{1/2}} \right) \,.
	\end{equation}
	\label{c:change_QG}	
\end{corollary}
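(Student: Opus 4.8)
The plan is to combine the two ingredients exactly as the corollary's name suggests. Start from Lemma~\ref{l:E_P_gen}, applied not to $A$ itself but to the product set $AB$; that is, use inequality (\ref{f:E_P_gen+}) with $A$ replaced by $AB$ and with the same $P$. This yields
\begin{equation*}
\left( \sum_{x\in P} r^k_{AB-AB} (x)  \right)^4 \le |AB|^{2k} \E^{+}_{2k} (AB) \E^{+} (P) \,.
\end{equation*}
However, this is not quite what we want: the left side features $r_{AB-AB}$, whereas the corollary asks for $r_{A-A}$. The point is that one should instead dilate the set $P$ (or rather, fix $b\in B$ and observe that $r_{A-A}(x) = r_{bA - bA}(bx)$, so $\sum_{x\in P} r^k_{A-A}(x) = \sum_{x\in bP} r^k_{bA-bA}(x)$), and then run the Lemma on $A$ with the set $bP$. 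Averaging over $b\in B$ and using Cauchy--Schwarz in $b$ is the standard device here; alternatively one embeds $\{(x, r_{A-A}(x))\}$ into an incidence configuration built from $A$ and $B$ together. I expect the cleanest route is the one implicit in Lemma~\ref{l:AA_small_energy}: realise $\sum_{x\in P} r^k_{A-A}(x)$ as counting solutions to a system that, after clearing denominators using elements of $B$, becomes a point--plane incidence count with point set of size $\sim |P|\,|B|\,(\text{something})$ and $k$ collinear points controlled by $|B|$ or $|PB|$.

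Concretely, the second step is to apply Theorem~\ref{t:Misha+} to bound $\E^{+}(P)$ — or more precisely the relevant incidence count — in terms of $p$, $|P|$, and $|B|$, in the same way Lemma~\ref{l:AA_small_energy} bounds $\E^{+}(Q)$ by $C_*(M^2|Q|^4/p + M^{3/2}|Q|^3/|A|^{1/2})$. Indeed, if we simply substitute into (\ref{f:E_P_gen+}) the bound on $\E^{+}(P)$ coming from Lemma~\ref{l:AA_small_energy} with $Q = P$ and the role of the ``large dilating set'' played by $B$ (noting $|PB| \le |P||B|$, so the hypothesis $|PB| \le M|P|$ holds trivially with $M \le |B|$ — but we want the $|B|^{1/2}$ saving rather than an $M$-dependent bound, so we use the version of Lemma~\ref{l:AA_small_energy} in the wasteful regime $M = |B|$ is wrong). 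The right way: we want the clean form $\E^{+}(P) \le C_*(|P|^4/p + |P|^3/|B|^{1/2})$, which is exactly Lemma~\ref{l:AA_small_energy} applied with $Q = P$, the auxiliary set being $B$, and $M = 1$ in the sense that we only use $|PB| \le |P|\cdot|B|$ and feed $|B|^{1/2}$ into the denominator — re-examining the proof of Lemma~\ref{l:AA_small_energy}, the factor $|A|^{1/2}$ in the denominator is the size of the dilating set, so here it becomes $|B|^{1/2}$, and the $M$ factors disappear because we are not assuming any compression of $PB$. Plugging this into (\ref{f:E_P_gen+}) gives
\begin{equation*}
\left( \sum_{x\in P} r^k_{A-A} (x)  \right)^4 \le |A|^{2k} \E^{+}_{2k} (A) \cdot C_* \left( \frac{|P|^4}{p} + \frac{|P|^3}{|B|^{1/2}} \right)\,,
\end{equation*}
and to turn $\E^{+}_{2k}(A)$ into $\E^{+}_{2k}(AB)$ one notes $\E^{+}_{2k}(A) \le \E^{+}_{2k}(AB)$ is false in general, so the correct manoeuvre is to have applied the Lemma to $AB$ from the start after the dilation trick, matching $r_{A-A}$ on the left via a fixed $b$.

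The main obstacle, and the step deserving care, is precisely this bookkeeping of which set plays which role: the corollary's left side has $r_{A-A}$ but its right side has $\E^{+}_{2k}(AB)$ and a $|B|^{1/2}$ saving, so the proof must route through the dilated configuration $bP$ and the set $AB$ simultaneously. The resolution is that $\sum_{x \in P} r_{A-A}^k(x) = \sum_{x \in P} r_{(AB)-(AB)}^k(\cdot)$ is not literally an identity, but one picks $b \in B$ realising the largest contribution and uses $r_{A-A}(x) \le r_{AB-AB}(bx)$ termwise (each representation $x = a_1 - a_2$ gives $bx = ba_1 - ba_2 \in AB - AB$), hence $\sum_{x\in P} r_{A-A}^k(x) \le \sum_{y \in bP} r_{AB-AB}^k(y)$; now apply (\ref{f:E_P_gen+}) with the set $AB$ and the set $bP$ (whose additive energy equals $\E^{+}(P)$ since dilation is an additive isomorphism), and bound $\E^{+}(P)$ by Lemma~\ref{l:AA_small_energy} with auxiliary set $B$ to extract the $|B|^{1/2}$. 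Everything else is substitution and collecting the absolute constant $C_*$, which I would not grind through here.
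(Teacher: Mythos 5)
There is a genuine gap, and it sits exactly at the step you flag as the "resolution". Your final route is: fix one $b\in B$, use $r_{A-A}(x)\le r_{AB-AB}(bx)$ to pass to $\sum_{y\in bP} r^k_{AB-AB}(y)$, apply (\ref{f:E_P_gen+}) with the pair $(AB, bP)$, and then bound $\E^{+}(P)=\E^{+}(bP)$ by "Lemma \ref{l:AA_small_energy} with auxiliary set $B$", claiming the clean bound $\E^{+}(P)\le C_*\bigl(|P|^4/p+|P|^3/|B|^{1/2}\bigr)$ with "the $M$ factors disappearing". That claimed bound is false for arbitrary $P$ and $B$: Lemma \ref{l:AA_small_energy} genuinely needs $|PB|\le M|P|$ and its conclusion degrades with $M$ (the incidence count in its proof involves $|PB|$, which without compression can be as large as $|P||B|$, giving only $\E^{+}(P)\ll |B|^2|P|^4/p+|B||P|^3$). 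Indeed no unconditional bound of the shape $\E^{+}(P)\ll |P|^4/p+|P|^3/|B|^{1/2}$ can hold: take $P$ an arithmetic progression, so $\E^{+}(P)\gg|P|^3$ no matter how large $B$ is. Since in the corollary $P$ is arbitrary (it arises later as a level set with no multiplicative structure), this step cannot be repaired within your scheme. A secondary defect: applying (\ref{f:E_P_gen+}) to $AB$ produces the factor $|AB|^{2k}$, not the $|A|^{2k}$ of the statement, so even granting the energy bound you would get a weaker inequality.

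The paper's proof avoids bounding $\E^{+}(P)$ altogether, and this is the idea you brushed past when you mentioned (and then dropped) averaging over $b$. One applies Lemma \ref{l:E_P_gen} to $A$ itself, getting $\bigl(\sum_{x\in P} r^k_{A-A}(x)\bigr)^2\le |A|^k\sum_x r^k_{A-A}(x)r_{P-P}(x)$, then uses $r_{A-A}(x)\le r_{AB-AB}(xb)$ averaged over \emph{all} $b\in B$, which yields the weighted sum $\frac{|A|^k}{|B|}\sum_x r^k_{AB-AB}(x)\,r_{B(P-P)}(x)$. Cauchy--Schwarz then produces $\E^{+}_{2k}(AB)$ together with $\sum_x r^2_{B(P-P)}(x)$, and it is \emph{this} quantity (solutions of $b(p_1-p_2)=b'(p_3-p_4)$) that is fed into the point--plane incidence Theorem \ref{t:Misha+}, giving $\le C_*\bigl(|B|^2|P|^4/p+|B|^{3/2}|P|^3\bigr)$. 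The $|B|^{1/2}$ saving comes from the prefactor $|B|^{-2}$ (created by the averaging) beating the $|B|^{3/2}$ in the incidence bound --- not from any additive-energy estimate for $P$, which, as above, simply is not available.
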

\begin{proof}
	By Lemma \ref{l:E_P_gen}, we have
	$$
	\left( \sum_{x\in P} r^k_{A-A} (x)  \right)^2 \le |A|^{k} \sum_x r^k_{A-A} (x) r_{P-P} (x) \,.
	$$
	Further clearly for any $b\in B$ the following holds
	$$
	r_{A-A} (x) \le r_{AB-AB} (xb) \,.
	$$
	Hence
	$$
	\left( \sum_{x\in P} r^k_{A-A} (x)  \right)^2 \le \frac{|A|^{k}}{|B|} \sum_x \sum_{b\in B} r^k_{AB-AB} (xb) r_{P-P} (x)
	=
	\frac{|A|^{k}}{|B|} \sum_x r^k_{AB-AB} (x) r_{B(P-P)} (x) \,.
	$$
	Using the Cauchy--Schwarz inequality, we obtain
	$$
	\left( \sum_{x\in P} r^k_{A-A} (x)  \right)^4
	\le
	\frac{|A|^{2k}}{|B|^2} \E^{+}_{2k} (AB) \sum_x r^2_{B(P-P)} (x) \,.
	$$
	To estimate the sum $\sum_x r^2_{B(P-P)} (x)$ we use Theorem \ref{t:Misha+} as in the proof of Lemma \ref{l:AA_small_energy}. 
	We have
	$$
	\sum_x r^2_{B(P-P)} (x) \le C_* \left( \frac{|B|^2|P|^4}{p} + |B|^{3/2} |P|^3 \right) \,.
	$$
	Thus, 
	$$
	\left( \sum_{x\in P} r^k_{A-A} (x)  \right)^4
	\le
	C_* |A|^{2k} \E^{+}_{2k} (AB) \left( \frac{|P|^4}{p} + \frac{|P|^3}{|B|^{1/2}} \right) \,.
	$$
	This completes the proof. 
	$\hfill\Box$
\end{proof}

\section{Multiplicative subgroups}
\label{sec:proof}


In this section we obtain the best upper bounds for $\T^{+}_k (\G)$, $\E^{+}_k (\G)$ and for the exponential sums over multiplicative subgroups $\G$.  
We begin with the quantity $\T^{+}_k (\G)$. 

\begin{theorem}
	Let $\G \subseteq \F_p^*$  be a multiplicative subgroup.
	Then for any $k\ge 2$, 
	$2^{64 k} C^4_* \le |\G|^{}$ 
	one has 
	\begin{equation}\label{f:T_k_G}
	\T^{+}_{2^k} (\G) \le
	2^{4k+6} C_* \log^4 |\G| \cdot \frac{|\G|^{2^{k+1}}}{p} + 16^{k^2} C^{k-1}_* \log^{4(k-1)} |\G| \cdot 
	|\G|^{2^{k+1} -\frac{(k+7)}{2}} \E^{+} (\G) \,,
	\end{equation}
	where $C_*$ is the absolute constant from Lemma \ref{l:AA_small_energy}. 
	\label{t:T_k_G}
\end{theorem}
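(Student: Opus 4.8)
The plan is to prove the estimate by induction on $k$, exactly in the spirit of the ``comparing $\oM_k$ and $\oM_{k/2}$'' strategy announced in the introduction. The base case $k=2$ is $\T^{+}_4 (\G) = \E^{+}_{}(\G)$ ... wait, that is not right: $\T^{+}_4(\G)$ is a genuine eighth-moment quantity, so the base case should instead be extracted from Lemma \ref{l:AA_small_energy} applied with $Q = \G$ and $A = \G$ (using $|\G\G| = |\G| \le M|\G|$ with $M=1$), which bounds $\E^{+}(\G) = \T^{+}_2(\G)$, and then one passes to $\T^{+}_4$ by a Cauchy--Schwarz / dyadic pigeonholing step on the level sets of $r_{\G+\G}$. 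Concretely, write $\T^{+}_{2^k}(\G) = \sum_x r_{\G+\G}(x)^{2^{k-1}} \cdot \ldots$; more efficiently, use the identity $\T^{+}_{2m}(\G) = \sum_x r_{mG}(x)^2$ where $mG$ denotes the $m$-fold sumset with multiplicity, so that $\T^{+}_{2^k}(\G)$ is the additive energy of the function $r_{2^{k-1}\G}$, and the self-similar structure $\T^{+}_{2^k}(\G) \ge \T^{+}_{2^{k-1}}(\G)^2 / (\text{something})$ by Cauchy--Schwarz goes the wrong way, so one needs the reverse inequality coming from the multiplicative structure.

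Here is the mechanism I would use at the inductive step. Suppose the bound holds for $k-1$. Dyadically decompose $\F_p$ according to the size of $r_{\G-\G}(x)$: there is a level set $P = P_\Delta := \{ x : \Delta \le r_{\G-\G}(x) < 2\Delta \}$ carrying a $\log$-fraction of the relevant sum, so that $\T^{+}_{2^k}(\G) \lessapprox \log |\G| \cdot \Delta^{2^{k}} |P| \cdot (\ldots)$ — more precisely one relates $\T^{+}_{2^k}(\G)$ to $\sum_{x\in P} r_{\G-\G}(x)^{2^k}$ for the dominant $P$. Now apply Corollary \ref{c:change_QG} with $A = \G$, $B = \G$ (legitimate since $\G \subseteq \F_p^*$ is a subgroup, so $\G\G = \G$), and the exponent $2^{k-1}$ in place of $k$: this yields
\begin{equation*}
\Big( \sum_{x\in P} r^{2^{k-1}}_{\G-\G}(x) \Big)^4 \le C_* |\G|^{2^{k}} \E^{+}_{2^{k}}(\G) \Big( \frac{|P|^4}{p} + \frac{|P|^3}{|\G|^{1/2}} \Big)\,,
\end{equation*}
and since $\E^{+}_{2^k}(\G) = \sum_x r_{\G-\G}(x)^{2^k} = \T^{+}_{2^k}(\G)$ (the energy $\E^{+}_{2^k}$ of a set equals its $2^k$-th additive moment), this is a \emph{self-improving} inequality for $\T^{+}_{2^k}(\G)$ on the left, controlled by $\T^{+}_{2^{k}}(\G)$ itself on the right — the point being that the left side involves only $P$, a much smaller piece, so after rearranging (dividing through, absorbing the $\T^{+}_{2^k}(\G)$ factor) one gets either that the first ($p$-dependent) term dominates, giving the first term in \eqref{f:T_k_G}, or else a genuine gain. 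In the gain case, one relates $\sum_{x \in P} r_{\G-\G}(x)^{2^{k-1}}$ back to $\T^{+}_{2^{k-1}}(\G)$ via the level-set bound $\sum_{x\in P} r_{\G-\G}(x)^{2^{k-1}} \gtrsim \Delta \cdot (\text{stuff}) $ combined with $\T^{+}_{2^{k-1}}(\G) \approx \sum_x r_{\G-\G}(x)^{2^{k-1}}$ up to $\log$ factors, and then feeds in the inductive hypothesis for $\T^{+}_{2^{k-1}}(\G)$. Tracking the constants: each step costs a factor $C_*$, a factor $\log^4|\G|$, and a power-of-$2$ factor, and there are $k-1$ steps, which is exactly how the $16^{k^2}$, $C_*^{k-1}$, $\log^{4(k-1)}|\G|$ and the $|\G|^{-(k+7)/2}$ savings (roughly $|\G|^{-1/2}$ gained $k$-ish times) arise. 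The hypothesis $2^{64k} C_*^4 \le |\G|$ is what guarantees at every stage that the error terms are genuinely absorbed and the induction does not degenerate.

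The main obstacle I anticipate is the bookkeeping in the self-improving inequality: one must be careful that when $\T^{+}_{2^k}(\G)$ appears on \emph{both} sides, the coefficient on the right-hand side after substituting the trivial bound $\sum_{x\in P} r^{2^{k-1}}_{\G-\G}(x) \le \T^{+}_{2^{k-1}}(\G)$ and iterating does not exceed $1$, so that the inequality can actually be solved for $\T^{+}_{2^k}(\G)$ rather than being vacuous; this is precisely where the lower bound on $|\G|$ and the dichotomy ``$p$-term dominates or we gain $|\G|^{-1/2}$'' must be invoked at each level. A secondary technical point is making sure the dyadic pigeonholing is done on the correct quantity — one wants $\sum_{x\in P} r_{\G-\G}^{2^k}$ versus $\sum_{x\in P} r_{\G-\G}^{2^{k-1}}$ on the same level set $P$, so that the ratio is controlled by $\Delta^{2^{k-1}}$, and $\Delta$ itself is controlled crudely by $|\G|$ — these elementary but fiddly inequalities are the routine calculations I would defer to the write-up. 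Finally, once $\T^{+}_{2^k}(\G)$ is bounded, the stated applications to exponential sums over $\G$ follow by the standard completion / Hölder argument relating $\max_{\xi\neq 0}|\widehat{\G}(\xi)|$ to $\T^{+}_k(\G)^{1/2k}$.
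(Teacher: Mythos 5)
Your proposal contains a genuine error that breaks the whole inductive mechanism: you identify $\sum_x r_{\G-\G}(x)^{2^k}=\E^{+}_{2^k}(\G)$ with $\T^{+}_{2^k}(\G)$. These coincide only for $k=1$; for larger $k$ they are different quantities (the paper treats them as \emph{dual} objects and proves separate theorems for them --- note e.g. $\E^{+}_m(\G)\le |\G|^{m+1}$ while $\T^{+}_m(\G)$ can be as large as about $|\G|^{2m-1}$). Because of this, your pigeonholing on level sets of $r_{\G-\G}$ and your appeal to Corollary \ref{c:change_QG} (which produces $\E^{+}_{2k}(AB)$ on the right-hand side) is machinery aimed at bounding the $\E^{+}_{2^{k+1}}$-type energies --- this is essentially the skeleton of Theorem \ref{t:QG} in section \ref{sec:small_prod} --- and it never gets hold of $\T^{+}_{2^k}(\G)$ at all. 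The ``self-improving inequality with $\T^{+}_{2^k}$ on both sides'' that you anticipate is a symptom of this mismatch; in the correct argument no absorption is needed, because the gain comes from relating the $2s$-th moment to the $s$-th moment, not from solving an inequality for the same quantity.

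The paper's proof runs differently: write $\T^{+}_{2s}(\G)=\sum_{x,y,z} r_{s\G}(x)r_{s\G}(y)r_{s\G}(x+z)r_{s\G}(y+z)$, i.e.\ view it as the additive energy of the $s$-fold \emph{sumset} representation function $r_{s\G}$ (not $r_{\G-\G}$). After discarding the contribution of $x=0$ and of small values (cost $\le 4|\G|^{2s-1}\T^{+}_s(\G)$), dyadic pigeonholing plus H\"older produces a single level set $P=\{x:\D<r_{s\G}(x)\le 2\D\}\subseteq\F_p^*$ with $\T^{+}_{2s}(\G)\lesssim L^4\D^4\,\E^{+}(P)$. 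The crucial structural point, which your sketch never uses, is that $P$ is $\G$-invariant ($P\G=P$, since $r_{s\G}$ is invariant under multiplication by the subgroup), so Lemma \ref{l:AA_small_energy} applies with $M=1$ and gives $\E^{+}(P)\le C_*\bigl(|P|^4/p+|P|^3/|\G|^{1/2}\bigr)$. Combining this with the trivial bounds $|P|\D\le|\G|^s$ and $|P|\D^2\le\T^{+}_s(\G)$ yields the recursion
\begin{equation*}
\T^{+}_{2s}(\G)\;\le\;32\,C_*\,s^4\log^4|\G|\cdot\Bigl(\frac{|\G|^{4s}}{p}+|\G|^{2s-1/2}\,\T^{+}_s(\G)\Bigr)\,,
\end{equation*}
which is then iterated $k-1$ times down to $\T^{+}_2(\G)=\E^{+}(\G)$ (this is why $\E^{+}(\G)$ survives as a factor in (\ref{f:T_k_G}); no separate base-case estimate for $\E^{+}(\G)$ is needed, contrary to your opening paragraph). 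The hypothesis $2^{64k}C_*^4\le|\G|$ is only used at the end to clean up the accumulated logarithmic factors in the $p$-term. To repair your write-up you would have to abandon the $r_{\G-\G}$/Corollary \ref{c:change_QG} route and redo the decomposition on the level sets of $r_{s\G}$, exploiting their $\G$-invariance via Lemma \ref{l:AA_small_energy}.
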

\begin{proof}
	Fix any $s \ge 2$. 
	Our intermediate aim is to prove 
	\begin{equation}\label{f:T_2s,T_s}
	\T^{+}_{2s} (\G) \le 32 C_* s^4 \log^4 |\G| \cdot  \left( \frac{|\G|^{4s}}{p} + |\G|^{2s-1/2} \T^{+}_s (\G) \right) 
	\,.
	\end{equation}
	We have 
	$$
	\T^{+}_{2s} (\G) = \sum_{x,y,z} r_{s\G} (x) r_{s\G} (y) r_{s\G} (x+z) r_{s\G} (y+z) 
	\le 
	\frac{8}{5} \sum'_{x,y,z} r_{s\G} (x) r_{s\G} (y) r_{s\G} (x+z) r_{s\G} (y+z) + \mathcal{E} \,,
	$$
	where the sum above is taken over nonzero variables  $x$ with  $r(x) > \T^{+}_{2s} (\G) /(8|\G|^{3s}) := \rho$
	and
	\begin{equation}\label{f:E_bound}
	\mathcal{E} \le 4 r_{s\G} (0) \sum_{y,z} r_{s\G} (y) r_{s\G} (z) r_{s\G} (y+z)
	\le
	4 r_{s\G} (0) |\G|^s \T^{+}_s (\G) 
	\le
	4 |\G|^{2s-1} \T^{+}_s (\G) \,.
	\end{equation}
	Put $P_j = \{ x ~:~ \rho 2^{j-1} < r_{s\G} (x) \le \rho 2^{j}\} \subseteq \F^*_p$. 
	If (\ref{f:T_2s,T_s}) does not hold, then the possible number of sets $P_j$ does not exceed $L:= s\log |\G|$. 
	By the Dirichlet principle there is $\D = \D_{j_0}$, 
	and a set $P=P_{j_0}$ such that 
	$$
	\T^{+}_{2s} (\G) 
	\le \frac{8}{5} L^4 (2\D)^4 \E^{+} (P) + \mathcal{E} 
	= \T'_{2s} (\G) + \mathcal{E} \,.
	$$
	Indeed, putting $f_i (x) = P_i (x) r_{s\G} (x)$, and using the H\"older inequality, we get 
	$$
	\sum'_{x,y,z} r_{s\G} (x) r_{s\G} (y) r_{s\G} (x+z) r_{s\G} (y+z) 
	\le
	\sum_{i,j,k,l=1}^L\, \sum_{x,y,z} f_i (x) f_j (y) f_k (x+z)  f_l (y+z)
	\le 
	$$
	$$
	\sum_{i,j,k,l=1}^L (\E^{+} (f_i) \E^{+} (f_j) \E^{+} (f_k) \E^{+} (f_l) )^{1/4} 
	=
	\left( \sum_{i=1}^L (\E^{+} (f_i))^{1/4} \right)^4
	\le
	L^3  \sum_{i=1}^L \E^{+} (f_i) \le L^4 \max_i \E^{+} (f_i) \,. 
	$$
	Moreover we always have $|P| \D^2 \le \T^{+}_{s} (\G)$ and $|P| \D \le |\G|^s$. 
	Using Lemma \ref{l:AA_small_energy}, we obtain
	$$
	\E^{+} (P) 
	\le
	C_* \left( \frac{|P|^4}{p} + \frac{|P|^3}{|\G|^{1/2}} \right) \,.
	$$
	Hence
	\begin{equation}\label{tmp:30.03_1}
	\T'_{2s} (\G)
	\le \frac{8}{5}  (16 C_*) L^4 \D^4 \left( \frac{|P|^4}{p} + \frac{|P|^3}{|\G|^{1/2}} \right)
	\le
	\frac{8}{5} (16 C_*) L^4 \left( \frac{|\G|^{4s}}{p} + \frac{|P|^3 \D^4}{|\G|^{1/2}} \right) \,.
	\end{equation}
	Let us consider 
	the second term in (\ref{tmp:30.03_1}).
	Then in view of $|P| \D^2 \le \T^{+}_{s} (\G)$ and $|P| \D \le |\G|^s$, we have 
	$$
	|P|^3 \D^4 = (P \D)^2 P \D^2 \le |\G|^{2s}  \T^{+}_{s} (\G) \,.
	$$
	In other words, by (\ref{f:E_bound}), we get 
	$$
	\T^{+}_{2s} (\G) \le \frac{8}{5} (16 C_*) L^4 \left( \frac{|\G|^{4s}}{p} + |\G|^{2s-1/2} \T^{+}_s (\G) \right)  
	+
	4 |\G|^{2s-1} \T^{+}_s (\G)
	\le
	$$
	$$ 
	\le
	32 C_* s^4 \log^4 |\G| \cdot  \left( \frac{|\G|^{4s}}{p} + |\G|^{2s-1/2} \T^{+}_s (\G) \right) 
	$$
	and inequality  (\ref{f:T_2s,T_s}) is proved. 
	
	Now applying formula  (\ref{f:T_2s,T_s})  successively $(k-1)$ times, we obtain 
	$$
	\T^{+}_{2^k} (\G) \le 2^{4k+6} C_* \log^4 |\G| 
	\cdot 
	\frac{|\G|^{2^{k+1}}}{p} + 16^{k^2} C^{k-1}_* \log^{4(k-1)} |\G| \cdot |\G|^{2^k + \dots + 4 -\frac{(k-1)}{2}} \E^{+} (\G)  
	\le
	$$
	\begin{equation}\label{tmp:30.03_2}
	\le
	2^{4k+6} C_* \log^4 |\G| \cdot \frac{|\G|^{2^{k+1}}}{p} + 16^{k^2} C^{k-1}_* \log^{4(k-1)} |\G| \cdot 
	|\G|^{2^{k+1} -\frac{(k+7)}{2}} \E^{+} (\G) \,.
	\end{equation}
	To get the first term in the last formula we have used our condition 
	$2^{64 k} C^4_*\le |\G|^{}$ to insure that $|\G|^{1/2} \ge 2^{4k+1} C_* \log^4 |\G|$. 
	This completes the proof. 
	$\hfill\Box$
\end{proof}

\begin{remark}
	The condition  
	$2^{64 k} C^{4}_* \le |\G|^{}$ 
	can be dropped but then we will have the multiple $16^{k^2} (C_* \log |\G|)^{k-1}$ in the first term of (\ref{f:T_k_G}). 
\end{remark}

Splitting any $\G$---invariant set onto cosets over $\G$ and applying the norm property of $\T^{+}_l$, we obtain

\begin{corollary}
	Let $\G \subseteq \F_p^*$ be a multiplicative subgroup, and $Q \subseteq \F_p^*$ be a set with  $Q \G = Q$. 
	Then for any $k\ge 2$, 
	$2^{64 k} C^{4}_*\le |\G|^{}$ 
	one has 
	\begin{equation}\label{f:Holder_Q}
	\T^{+}_{2^k} (Q) \le
	2^{4k+6} C_* \log^4 |\G| \cdot \frac{|Q|^{2^{k+1}}}{p} + 16^{k^2} C^{k-1}_* \log^{4(k-1)} |\G| \cdot 
	|\G|^{-\frac{(k+7)}{2}} \E^{+} (\G) |Q|^{2^{k+1}} \,.
	\end{equation}	
	\label{c:Holder_Q}	
\end{corollary}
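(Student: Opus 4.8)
The plan is to reduce the estimate for the $\G$-invariant set $Q$ to the estimate for $\G$ itself, which is exactly Theorem~\ref{t:T_k_G}, by splitting $Q$ into cosets of $\G$ and using that $(\T^{+}_l (f))^{1/2l}$ is a norm.

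First I would note that since $Q\subseteq \F_p^*$ and $Q\G = Q$, the set $Q$ is a disjoint union of full cosets of $\G$: with $m := |Q|/|\G|$ (a positive integer) there are $x_1,\dots,x_m \in \F_p^*$ with $Q = \bigsqcup_{i=1}^{m} x_i \G$, hence as functions $Q = \sum_{i=1}^{m} (x_i \G)$. Since $l := 2^k \ge 4$ and, by the remark in section~\ref{sec:definitions} (see \cite{TV}), the map $f\mapsto (\T^{+}_l (f))^{1/2l}$ is a norm, the triangle inequality gives
$$
\big( \T^{+}_{2^k} (Q) \big)^{1/2^{k+1}} \le \sum_{i=1}^{m} \big( \T^{+}_{2^k} (x_i \G) \big)^{1/2^{k+1}} \,.
$$
Next I would use that $\T^{+}_{2^k}$ is invariant under dilation by a nonzero scalar, because the equation $a_1+\dots+a_{2^k} = a'_1+\dots+a'_{2^k}$ defining $\T^{+}_{2^k}$ is homogeneous; thus $\T^{+}_{2^k} (x_i\G) = \T^{+}_{2^k} (\G)$ for every $i$, and therefore
$$
\T^{+}_{2^k} (Q) \le m^{2^{k+1}}\, \T^{+}_{2^k} (\G) = \left( \frac{|Q|}{|\G|} \right)^{2^{k+1}} \T^{+}_{2^k} (\G) \,.
$$

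Finally I would invoke Theorem~\ref{t:T_k_G}, which applies since $2^{64k} C^4_* \le |\G|$, to bound $\T^{+}_{2^k} (\G)$, and then multiply through by $(|Q|/|\G|)^{2^{k+1}}$. In the first term the factor $|\G|^{2^{k+1}}$ cancels against $|\G|^{-2^{k+1}}$, leaving $2^{4k+6} C_* \log^4 |\G| \cdot |Q|^{2^{k+1}}/p$; in the second term $|\G|^{2^{k+1} - (k+7)/2}$ becomes $|\G|^{-(k+7)/2}$, leaving $16^{k^2} C_*^{k-1} \log^{4(k-1)} |\G| \cdot |\G|^{-(k+7)/2} \E^{+} (\G) |Q|^{2^{k+1}}$, which is precisely (\ref{f:Holder_Q}).

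This corollary has no genuine obstacle: the only points warranting a line of justification are that $Q$ really is a union of cosets (immediate from $Q\G=Q$ and $0\notin Q$), that the norm property of $\T^{+}_l$ quoted from \cite{TV} is being applied at the legitimate exponent $l=2^k$, and that the powers of $|\G|$ cancel as claimed — so the actual work is the bookkeeping of exponents rather than any new idea.
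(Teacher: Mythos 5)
Your proposal is correct and is exactly the paper's argument: the paper derives Corollary \ref{c:Holder_Q} precisely by "splitting any $\G$--invariant set onto cosets over $\G$ and applying the norm property of $\T^{+}_l$", and your write-up just fills in the straightforward bookkeeping (dilation invariance of $\T^{+}_{2^k}$, the factor $(|Q|/|\G|)^{2^{k+1}}$, and the cancellation of the powers of $|\G|$), all of which checks out against Theorem \ref{t:T_k_G}.
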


Let $\G$ be a subgroup of size less than $\sqrt{p}$. 
Considering a particular case $k=2$ of formula (\ref{t:T_k_G}) of Theorem \ref{t:T_k_G} and using $\E^{+} (\G) \ll |\G|^{5/2-c}$, where $c>0$ is an absolute constant (see \cite{s_ineq}), one has   

\begin{corollary}
	Let $\G$ be a multiplicative subgroup, $|\G| \le \sqrt{p}$.
	Then 
	$$
	\T^{+}_4 (\G) \ll \frac{|\G|^8 \log^4 |\G|}{p} + |\G|^{6-c} \,.
	$$
	In particular, 
	$|4\G| \gg |\G|^{2+c}$.  
\end{corollary}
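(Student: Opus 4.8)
The plan is to specialise Theorem~\ref{t:T_k_G} to the value $k=2$ and then to substitute the known power--saving bound for the additive energy of a multiplicative subgroup. Taking $k=2$ in (\ref{f:T_k_G}) and using $2^{k+1}=8$ together with $2^{k+1}-(k+7)/2 = 8-9/2 = 7/2$, the inequality reads
$$
\T^{+}_4 (\G) \le 2^{14} C_* \log^4 |\G| \cdot \frac{|\G|^8}{p} + 16^{4} C_* \log^4 |\G| \cdot |\G|^{7/2}\, \E^{+} (\G) \,,
$$
where $C_*$ is the absolute constant of Lemma~\ref{l:AA_small_energy}. The size restriction in the hypothesis of Theorem~\ref{t:T_k_G} (namely $2^{128} C_*^4 \le |\G|$ when $k=2$) is an absolute constant, hence costs nothing: if $|\G|$ were below it, then $\T^{+}_4(\G) \le |\G|^8 = O(1)$ and every asserted inequality would be trivial.

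Next I would feed in the bound $\E^{+}(\G) \ll |\G|^{5/2-c}$, valid for every multiplicative subgroup with $|\G| \le \sqrt p$ and some absolute $c>0$, see \cite{s_ineq}. Substituting and collapsing $|\G|^{7/2}\cdot|\G|^{5/2-c}=|\G|^{6-c}$ gives
$$
\T^{+}_4(\G) \ll \frac{|\G|^8 \log^4|\G|}{p} + |\G|^{6-c} \log^4|\G| \,;
$$
absorbing the logarithm into the exponent (replace $c$ by $c/2$, which is legitimate once $|\G|$ exceeds an absolute constant) yields the first assertion $\T^{+}_4(\G) \ll |\G|^8 \log^4|\G|/p + |\G|^{6-c}$.

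For the consequence $|4\G| \gg |\G|^{2+c}$ I would apply the Cauchy--Schwarz inequality in the standard form
$$
\T^{+}_4 (\G) = \sum_x r^2_{4\G} (x) \ge \frac{\left( \sum_x r_{4\G} (x) \right)^2}{|4\G|} = \frac{|\G|^8}{|4\G|} \,,
$$
whence $|4\G| \ge |\G|^8 / \T^{+}_4(\G)$. Since $|\G|\le\sqrt p$ forces $|\G|^8/p \le |\G|^6$, the geometric term in the bound for $\T^{+}_4(\G)$ is, up to the logarithm, no larger than the arithmetic term; in the range where $|\G|$ is a little below $\sqrt p$ (so that $|\G|^8\log^4|\G|/p \le |\G|^{6-c}$) one gets $\T^{+}_4(\G)\ll|\G|^{6-c}$ and hence $|4\G| \gg |\G|^{2+c}$.

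The step I expect to require the most care is exactly this last comparison: controlling the point--plane term $|\G|^8 \log^4|\G| / p$ --- which enters (\ref{f:T_k_G}) through the incidence bound of Theorem~\ref{t:Misha+} --- against the arithmetic term $|\G|^{6-c}$. This is the one place where the hypothesis $|\G| < \sqrt p$ is genuinely used, and one has to be slightly careful about exactly how close to $\sqrt p$ the subgroup is allowed to be if one wants the clean power saving for $|4\G|$.
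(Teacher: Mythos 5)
Your argument is exactly the paper's: the corollary is presented as an immediate consequence of the $k=2$ case of Theorem~\ref{t:T_k_G} together with the bound $\E^{+}(\G) \ll |\G|^{5/2-c}$ from \cite{s_ineq}, with the Cauchy--Schwarz step $|4\G| \ge |\G|^8/\T^{+}_4(\G)$ left implicit, and your exponent bookkeeping ($2^{k+1}-(k+7)/2 = 7/2$, hence $|\G|^{7/2+5/2-c}=|\G|^{6-c}$) matches. Your closing caveat about $|\G|$ very close to $\sqrt p$, where the term $|\G|^8\log^4|\G|/p$ blocks the clean conclusion $|4\G| \gg |\G|^{2+c}$, applies equally to the paper's own statement, so it is a fair (and slightly more careful) reading rather than a defect of your proof.
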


Previous results on $\T^{+}_{k} (\G)$, $|\G| \le \sqrt{p}$ 
with small  $k$ had the form $\T^{+}_{k} (\G) \ll |\G|^{2k-2+c_k}$ with some $c_k>0$, see, e.g., \cite{KS1}. 
The best 
upper bound for
$\T^{+}_3 (\G)$ can be found in \cite{Steinikov_T_3}.

\bigskip 

Now we prove a corollary about exponential sums over subgroups which is parallel to results from \cite{BG}, \cite{BGK}, \cite{Garaev_survey}. 
The difference between the previous 
estimates and Corollary \ref{c:exp_sums} 
is just slightly better constant 
$C$ in (\ref{f:exp_sums'}).

\begin{corollary}
	Let $\G$ be a multiplicative subgroup, $|\G| \ge p^\d$, $\d>0$.
	Then
	\begin{equation}\label{f:exp_sums}
	\max_{\xi \neq 0} |\FF{\G} (\xi)| \ll |\G| \cdot p^{-\frac{\d}{2^{7+ 2\d^{-1}}}} \,.
	\end{equation}
	Further we have a nontrivial upper bound $o(|\G|)$ for the maximum in (\ref{f:exp_sums})  if 
	\begin{equation}\label{f:exp_sums'}	
	\log |\G| \ge \frac{C\log p}{\log \log p} \,,
	\end{equation}
	where $C>2$ is any constant.
	\label{c:exp_sums}
\end{corollary}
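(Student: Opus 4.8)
The plan is to feed the estimate for $\T^{+}_{2^k}(\G)$ from Theorem \ref{t:T_k_G} into the Fourier identity $\T^{+}_{2^k}(\G) = p^{-1}\sum_{\xi}|\FF{\G}(\xi)|^{2^{k+1}}$, using that $|\FF{\G}(\xi)|$ is constant along every multiplicative coset $\xi\G\subseteq\F_p^*$. Fix an integer $k\ge 2$ (to be chosen) and set $M := \max_{\xi\neq 0}|\FF{\G}(\xi)|$, attained at some $\xi_0\neq 0$. Substituting $x\mapsto gx$ in the defining sum shows $|\FF{\G}(g\xi)| = |\FF{\G}(\xi)|$ for all $g\in\G$, so $M$ is attained on the entire coset $\xi_0\G$, a set of $|\G|$ nonzero frequencies; since $\FF{\G}(0)=|\G|$ this gives
$$
|\G|\, M^{2^{k+1}} \;\le\; \sum_{\xi\neq 0}|\FF{\G}(\xi)|^{2^{k+1}} \;=\; p\,\T^{+}_{2^k}(\G) - |\G|^{2^{k+1}} \;\le\; p\,\T^{+}_{2^k}(\G)\,.
$$

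Next I would insert Theorem \ref{t:T_k_G}, bound $\E^{+}(\G)\le|\G|^3$ trivially, and use $p\le|\G|^{1/\d}$ (from $|\G|\ge p^\d$). Dividing by $|\G|$ and assuming $|\G|$ is large in terms of $k$, so that $2^{4k+6}C_*\log^4|\G|$ and $16^{k^2}C^{k-1}_*\log^{4(k-1)}|\G|$ are both $\le|\G|^{1/2}$ (this also secures the side condition $2^{64k}C^4_*\le|\G|$ of Theorem \ref{t:T_k_G}), one obtains
$$
M^{2^{k+1}} \;\le\; |\G|^{2^{k+1}-1/2} + |\G|^{2^{k+1}-\frac{k+3}{2}+\frac{1}{\d}+\frac{1}{2}}\,.
$$
Choosing $k := \lceil 2\d^{-1}\rceil + 1$ pushes the second exponent below $2^{k+1}-3/2$, hence $M^{2^{k+1}}\le 2|\G|^{2^{k+1}-1/2}$ and therefore $M \le 2|\G|^{1-2^{-(k+2)}} = 2|\G|\cdot|\G|^{-2^{-(k+2)}}\le 2|\G|\,p^{-\d 2^{-(k+2)}}$. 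Since $k+2\le 2\d^{-1}+4\le 7+2\d^{-1}$, the exponent is at least $\d/2^{\,7+2\d^{-1}}$, which is (\ref{f:exp_sums}). (If $|\G|$ is not large in terms of $\d$, then $|\G|$ and $p$ are both bounded in terms of $\d$ and (\ref{f:exp_sums}) holds after adjusting the implied constant.)

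For the second assertion I would run the same computation with $\d$ depending on $p$: the hypothesis $\log|\G|\ge C\log p/\log\log p$ permits $\d = C/\log\log p$, so $k\asymp 2\log\log p/C$ and, since all logarithms are base $2$, $2^{k+2}\le 16(\log p)^{2/C}$. One checks that $\log p\gg(\log\log p)^3$ is enough to keep $16^{k^2}C^{k-1}_*\log^{4(k-1)}|\G|\le|\G|^{1/2}$ and $2^{64k}C^4_*\le|\G|$, so the absolute constant in $M\le 2|\G|\,p^{-\d 2^{-(k+2)}}$ persists and
$$
M \;\le\; 2|\G|\cdot 2^{-\frac{C}{16}(\log p)^{1-2/C}/\log\log p}\,.
$$
As $C>2$ makes $1-2/C>0$, the exponent tends to $-\infty$ with $p$, so $M=o(|\G|)$, which is the second claim.

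The only genuine difficulty is the bookkeeping in the middle step: one must ensure that the large constant $16^{k^2}$ and the factor $\log^{4(k-1)}|\G|$ coming from Theorem \ref{t:T_k_G} are absorbed by the true power saving $|\G|^{-(k+3)/2+1/\d}$, and that the remaining $2^{k+1}$-th root still leaves a positive power of $p$. Tracking these quantities is exactly what fixes the threshold $C>2$: for $C=2$ one has $(\log p)^{1-2/C}=1$, which is wiped out by the surviving $1/\log\log p$, so no nontrivial bound remains.
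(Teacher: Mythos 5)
Your proposal is correct and follows essentially the same route as the paper: bound $|\G|\,M^{2^{k+1}}\le p\,\T^{+}_{2^k}(\G)$ via the multiplicative invariance of $|\FF{\G}|$, insert Theorem \ref{t:T_k_G} with the trivial bound $\E^{+}(\G)\le|\G|^3$, and take $k\approx 2\d^{-1}+O(1)$ so that the saving survives the $2^{k+1}$-th root. The only cosmetic difference is your choice $k=\lceil 2\d^{-1}\rceil+1$ with $p\le|\G|^{1/\d}$ in place of the paper's $k=\lceil 2\log p/\log|\G|+4\rceil$ (and the paper's optional reduction to $|\G|<\sqrt p$), which changes nothing in the exponent $\d/2^{7+2\d^{-1}}$ or in the threshold $C>2$.
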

\begin{proof}
	We can assume that $|\G| < \sqrt{p}$, say, because otherwise estimate (\ref{f:exp_sums}) is known, see \cite{KS1}.
	By $\rho$ denote the maximum in (\ref{f:exp_sums}). 
	Then by Theorem \ref{t:T_k_G}, a trivial bound $\E^{+}(\G) \le |\G|^3$ 
	and formula (\ref{def:T_k}), we obtain 
	\begin{equation}\label{tmp:31.03_1}
	|\G| \rho^{2^{k+1}} \le p \T_{2^k} (\G) \le  
	2^{4k+6} C_* \log^4 |\G| \cdot |\G|^{2^{k+1}} + 16^{k^2} C^{k-1}_* \log^{4(k-1)} |\G| \cdot 
	|\G|^{2^{k+1} -\frac{k+1}{2}} p \,,
	\end{equation}
	provided  $2^{64k} C^4_* \le |\G|$. 
	Put $k = \lceil 2\log p /\log |\G| + 4 \rceil \le 2/\d +5$. 
	Also, notice that 
	\begin{equation}\label{tmp:k_choice}
	\frac{p \log^{4(k-1)} |\G| }{|\G|^{k/2}} 
	\le 
	1
	\end{equation}
	because $k\ge 2\log p /\log |\G| + 4$ and $p$ is a sufficiently large number.
	Also, since  $|\G| \ge p^\d$, it follows that  $2^{64k} C^4_* \le |\G|$ 
	for sufficiently large $p$. 
	Taking  a power $1/2^{k+1}$ from both parts of (\ref{tmp:31.03_1}), we see 
	in view of (\ref{tmp:k_choice}) 
	that 
	$$
	\rho \ll |\G| \left( |\G|^{-\frac{1}{2^{k+2}}} + |\G|^{-\frac{1}{2^{k+2}}} \right)
	\ll
	|\G|^{1-\frac{1}{2^{k+2}}}
	\ll
	|\G| \cdot p^{-\frac{\d}{2^{7+ 2\d^{-1}}}} \,.
	$$
	To prove the second part of our corollary just notice that the same choice of $k$ gives something nontrivial if $2^{k+2}  \le \eps \log |\G|$ for any $\eps>0$. 
	In other words, it is enough to have 
	$$
	k  + 2 \le \frac{2\log p}{\log |\G|} + 7  \le \log \log |\G| - \log (1/\eps) \,.
	$$
	It means that the inequality  $\log |\G| \ge C \log p /(\log \log p)$ for any $C>2$ is enough. 
	This completes the proof. 
	$\hfill\Box$
\end{proof}


\begin{remark}
	One can improve 
	some constants in the proof (but not the constant $C$ in (\ref{f:exp_sums'})), probably, 
	but we did not make such calculations.
\end{remark}


Now we estimate a "dual"\, quantity $\E^{+}_s (Q)$ for $\G$--invariant set $Q$ (about duality of $\T^{+}_{k/2} (A)$ and  $\E^{+}_k (A)$, see \cite{SS1} and formulae (\ref{tmp:12.04_1--})---(\ref{tmp:12.04_1.5})). 
We give even two bounds and both of them use the Fourier approach.

\begin{theorem}
	Let $\G \subseteq \F_p^*$ be a multiplicative subgroup, and $Q \subseteq \F^*_p$ be a set with $Q \G = Q$
	and $|Q|^2 |\G| \le p^2$.
	Then for $0 \le k$, $2^{64k} C^4_* \le |\G|$ one has 
	$$
	\E^{+}_{2^{k+1}} (Q) 
	\le
	2^{2^{k+2}+3} (\log |Q|)^{2^{k+1}} |Q|^{2^{k+1}}  
	\times 
	$$
	\begin{equation}\label{f:Q_shift}
	\times
	\left(
	2^{4k+6} \log^4 |Q| + 16^{k^2} C^{k-1}_* (\log |Q|)^{4(k-1)} \cdot 
	|\G|^{-\frac{(k+1)}{2}} p  \right) \,.
	\end{equation}
	Further 
	let $k\ge 1$ be such that 
	$|\G|^{\frac{k+2}{2}} \ge |Q| \log^{4k} |Q|$.
	Then 
	\begin{equation}\label{f:Q_shift'}
	\E^{+}_{2^{k+1}} (Q)
	\le (2^8 C_*)^{k+1} |Q|^{2^{k+1}} |\G|^{\frac{1}{2}} 
	\,.
	\end{equation}
	\label{t:Q_shift}
\end{theorem}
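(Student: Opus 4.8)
The strategy is to set up an inductive scheme on $k$ that compares $\E^{+}_{2^{k+1}} (Q)$ with $\E^{+}_{2^{k}} (Q)$, exactly parallel to the comparison of $\T^{+}_{2s}(\G)$ with $\T^{+}_s(\G)$ in Theorem~\ref{t:T_k_G}, but now for the ``dual'' energy $\E^{+}_l$. First I would recall the standard duality identities $(\ref{tmp:12.04_1--})$--$(\ref{tmp:12.04_1.5})$ linking $\E^{+}_{2^{k+1}}(Q)$ to $\T^{+}_{2^{k}}$ of a related object: writing $\E^{+}_{2m}(Q) = \sum_x (Q\circ Q)(x)^{2m} = \sum_x r_{Q-Q}(x)^{2m}$, one expresses this Fourier-analytically and dyadically decomposes the level sets $P_j = \{x : \rho 2^{j-1} < r_{Q-Q}(x) \le \rho 2^j\}$ of the convolution $r_{Q-Q}$. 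Since $Q\G = Q$, each $P_j$ is $\G$-invariant, which is the crucial structural fact that lets Corollary~\ref{c:Holder_Q} (or Theorem~\ref{t:T_k_G} applied coset-by-coset) be brought to bear.

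For the \emph{first} bound $(\ref{f:Q_shift})$: I would bound $\E^{+}_{2^{k+1}}(Q)$ by a sum over the $O(\log|Q|)$ dyadic pieces, isolate the dominant piece $P = P_{j_0}$ by the pigeonhole principle, and estimate the contribution of $P$ using the trivial bounds $|P|\Delta \le |Q|^{2}$-type inequalities together with an upper bound for $\T^{+}_{2^{k}}(P)$ coming from Corollary~\ref{c:Holder_Q} with the trivial $\E^{+}(\G) \le |\G|^3$. Tracking the powers of $2$, the $16^{k^2}$ and $C_*^{k-1}$ factors, and the $(\log|Q|)$ losses through the dyadic decomposition (each level of the induction costing one more factor of $2^{k+2}$ in the exponent of $\log$, hence the $(\log|Q|)^{2^{k+1}}$ prefactor) gives $(\ref{f:Q_shift})$. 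The condition $|Q|^2|\G| \le p^2$ is what guarantees the ``$p$-term'' in the incidence bound does not dominate.

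For the \emph{second}, cleaner bound $(\ref{f:Q_shift'})$: here the plan is to run the same $\E^{+}_{2^{k+1}} \rightsquigarrow \E^{+}_{2^k}$ recursion but at each step either the new energy has dropped by a definite factor $|\G|^{-1/2}$ (times absolute and logarithmic constants absorbed into $(2^8 C_*)$), in which case we recurse, or else it has \emph{not} dropped, in which case the dominant level set $P$ must be large and $\G$-invariant with large additive energy, and Lemma~\ref{l:AA_small_energy} applied with the subgroup $\G$ (so $M = O(1)$, since $|\G P| = |P|$) forces $\E^{+}(P) \le C_*|P|^3/|\G|^{1/2}$, contradicting largeness. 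Iterating at most $k+1$ times and using the hypothesis $|\G|^{(k+2)/2} \ge |Q|\log^{4k}|Q|$ to absorb all accumulated logarithmic factors yields $(\ref{f:Q_shift'})$, with the final $|\G|^{1/2}$ being the residual loss from the last step of the recursion (when $k$ becomes too small to continue).

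The main obstacle I expect is bookkeeping the constants and logarithmic factors through the $O(k)$-fold iteration so that they collapse into the claimed closed forms $2^{2^{k+2}+3}(\log|Q|)^{2^{k+1}}$ and $(2^8 C_*)^{k+1}$ respectively; in particular one must be careful that the dyadic pigeonholing, which loses a factor $L = O(\log|Q|)$ raised to a power depending on the order of the energy at that stage, compounds correctly — this is why the base is $\log|Q|$ rather than $\log|\G|$ and why the exponent doubles at each level. A secondary technical point is verifying that the ``trivial edge case'' contributions (the analogue of the term $\mathcal E$ in the proof of Theorem~\ref{t:T_k_G}, coming from $x$ with small $r_{Q-Q}(x)$, and the $x=0$ term $r_{Q-Q}(0) = |Q|$) are genuinely lower-order under the stated hypotheses and can be folded into the stated constants.
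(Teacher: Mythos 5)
Your high-level strategy (dyadic pigeonholing, $\G$-invariance of the resulting level sets, the subgroup estimates of Corollary \ref{c:Holder_Q} and Lemma \ref{l:AA_small_energy}, and an iteration comparing $\E^{+}_{2^{k+1}}(Q)$ with lower-order energies) is indeed the paper's strategy, but your concrete instantiation decomposes the wrong object, and with that choice the key step has no mechanism. You pigeonhole the physical-space level sets $P_j=\{x:\ \rho 2^{j-1}<r_{Q-Q}(x)\le \rho 2^{j}\}$, whereas the paper works entirely in the dual group: for (\ref{f:Q_shift}) it writes $\E^{+}_{2s}(Q)$, $s=2^k$, via Parseval as $p^{-(2s-1)}\sum_{x_1+\dots+x_{2s}=0}|\FF{Q}(x_1)|^2\cdots|\FF{Q}(x_{2s})|^2$ and pigeonholes the level sets of $|\FF{Q}|$; such a spectrum set $P$ satisfies $P\G=P$ (because $Q\G=Q$), and the constrained sum restricted to $P$ is, up to pigeonholing losses, exactly $\D^{4s}\,\T^{+}_{s}(P)/p^{2s-1}$ --- that, and only that, is how Corollary \ref{c:Holder_Q} enters, together with the Parseval bound $\D^2|P|\le |Q|p$ (not $|P|\D\le|Q|^2$, which is a physical-space inequality). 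If instead $P$ is a level set of $r_{Q-Q}$, its contribution to $\E^{+}_{2^{k+1}}(Q)$ is about $\D^{2^{k+1}}|P|$, and there is no inequality controlling this by $\T^{+}_{2^k}(P)$; what one can exploit in physical space is $\E^{+}(P)$, via Lemma \ref{l:E_P_gen} and Corollary \ref{c:change_QG}, and that route is precisely Theorem \ref{t:QG}, which produces a bound of a different shape (note that the factor $p$ and the saving $|\G|^{-\frac{(k+1)}{2}}$ in (\ref{f:Q_shift}) are artefacts of the Fourier normalisation which a physical-space count cannot produce). You also omit the zero frequencies: the terms with some $x_j=0$ contribute $2s|Q|^{2}\E^{+}_{2s-1}(Q)/p$, and it is there (and in the base case $k=0$) that $|Q|^2|\G|\le p^2$ is actually used, through an induction on $k$ --- not to tame the $p$-term of the incidence bound as you suggest.

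The same issue undermines your plan for (\ref{f:Q_shift'}). The per-step gain of $|\G|^{-1/2}$ you want comes from applying Lemma \ref{l:AA_small_energy} to a level set $G$ of $\FF{g}$ with $g=r^{l}_{Q-Q}$ (again $\G$-invariant), combined with the Parseval facts $\o^2|G|\le p\,\E^{+}_{2l}(Q)$ and $\o|G|\le p|Q|^{l}$ and the $p^{-3}$ normalisation in $\E^{+}_{4l}(Q)=p^{-3}\E^{+}(\FF{g})$; this gives the recursion $\E^{+}_{4l}(Q)\lesssim L^{4}|Q|^{2l}\E^{+}_{2l}(Q)\bigl(|Q|^{2l}/\E^{+}_{2l}(Q)+|\G|^{-1/2}\bigr)$, and the correct dichotomy is ``either the trivial term dominates and the iteration stops, or one gains $|\G|^{-1/2}$ and recurses'' (no contradiction is derived; the case $s=k$ is finished with $\E^{+}(Q)\le 2C_*|Q|^3/|\G|^{1/2}$, which again needs $|Q|^2|\G|\le p^2$). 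If you run your decomposition on $r_{Q-Q}$ itself, the best per-step saving available from the paper's tools is $|\G|^{-1/8}$ (Corollary \ref{c:change_QG}, i.e.\ the mechanism of Theorem \ref{t:QG}), which after about $k$ steps is not enough to reach (\ref{f:Q_shift'}) under the hypothesis $|\G|^{\frac{k+2}{2}}\ge |Q|\log^{4k}|Q|$. So both parts require the dual-group decomposition; as written, your argument stalls exactly where Corollary \ref{c:Holder_Q} is supposed to be applied.
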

\begin{proof}
	We begin with (\ref{f:Q_shift}) and we prove this inequality by induction. 
	For $k=0$ the result is trivial in view of our condition $|Q|^2 |\G| \le p^2$.
	Put $s=2^{k}$, $k\ge 1$.  
	By the Parseval identity and formula  (\ref{f:F_svertka}), we have
	\begin{equation}\label{tmp:12.04_1--}
	\E^{+}_{2s} (Q) = \frac{1}{p^{2s-1}} \sum_{x_1+\dots+x_{2s}=0} |\FF{Q} (x_1)|^2 \dots |\FF{Q} (x_{2s})|^2 
	\le
	\end{equation}
	\begin{equation}\label{tmp:12.04_1-}
	\le
	\frac{2s|Q|^{2} \E^{+}_{2s-1} (Q)}{p}  + \frac{1}{p^{2s-1}} \sum_{x_1+\dots+x_{2s}=0 ~:~ \forall j\,\, x_j\neq 0} |\FF{Q} (x_1)|^2 \dots |\FF{Q} (x_{2s})|^2 
	=
	\end{equation}
	\begin{equation}\label{tmp:12.04_1}
	= 
	\frac{2s|Q|^{2} \E^{+}_{2s-1} (Q)}{p} + \E'_{2s} (Q)\,.
	\end{equation}
	Put $L=\log |Q|$. 
	By the Parseval identity
	$$
	\frac{1}{p^{2s-1}} \sum_{x_1+\dots+x_{2s}=0 ~:~ \forall j\,\, x_j\neq 0} |\FF{Q} (x_1)|^2 \dots |\FF{Q} (x_{2s})|^2
	\le
	$$
	$$
	\le
	\max_{x\neq 0} |\FF{Q} (x)|^2 \cdot \frac{1}{p^{2s-1}} \sum_{x_1+\dots+x_{2s}=0 ~:~ \forall j\,\, x_j\neq 0} |\FF{Q} (x_1)|^2 \dots |\FF{Q} (x_{2s-1})|^2
	\le 
	\max_{x\neq 0} |\FF{Q} (x)|^2 \cdot |Q|^{2s-1} \,.
	$$
	Hence as in the proof of Theorem \ref{t:T_k_G} consider $\rho^2 = \E^{+}_{2s} (Q)/ (8|Q|^{2s-1})$, further, the sets $P_j = \{ x ~:~ \rho 2^{j-1} < |\FF{Q} (x)| \le \rho 2^j \} \subseteq \F_p^*$ and using 
	the Dirichlet principle, we find $\D=\D_{j_0} \ge \rho$ and $P=P_{j_0}$ such that 
	\begin{equation}\label{tmp:12.04_1.5}
	\E'_{2s} (Q) \le \frac{4L^{2s} (2\D)^{4s}}{p^{2s-1}} \T^{+}_{s} (P) \,.	
	\end{equation}
	Clearly, $P\G = P$ (and this is the crucial point of  the proof, actually). 
	Applying Corollary \ref{c:Holder_Q}, we get
	\begin{equation}\label{tmp:31.03_2}
	\E'_{2s} (Q) \le \frac{2^{4s+2} L^{2s} \D^{4s}}{p^{2s-1}} \cdot \left(
	2^{4k+6} \log^4 |\G| \cdot \frac{|P|^{2s}}{p} + 16^{k^2} C^{k-1}_* \log^{4(k-1)} |\G| \cdot 
	|\G|^{-\frac{(k+7)}{2}} \E^{+} (\G) |P|^{2s} \right) \,.
	\end{equation}
	By the Parseval identity, we see that
	\begin{equation}\label{tmp:P_energy-}
	\D^2 |P| \le |Q| p \,.
	\end{equation}
	Whence 
	\begin{equation}\label{tmp:12.04_2}
	\E'_{2s} (Q) \le 2^{4s+2} L^{2s} |Q|^{2s}  \cdot \left(
	2^{4k+6} L^4 + 16^{k^2} C^{k-1}_* L^{4(k-1)} \cdot 
	|\G|^{-\frac{(k+7)}{2}} \E^{+} (\G)  p  \right) \,.
	\end{equation}
	Using a trivial bound $\E^{+} (\G) \le |\G|^3$, we get 
	\begin{equation}\label{tmp:06.05_1}
	\E'_{2s} (Q) 
	\le 2^{4s+2}
	L^{2s} |Q|^{2s}  \cdot \left(
	2^{4k+6} L^4 + 16^{k^2}  C^{k-1}_* L^{4(k-1)} \cdot 
	|\G|^{-\frac{(k+1)}{2}} p  \right)
	\,.
	\end{equation}
	Applying a crude bound
	$\E^{+}_{2s-1} (Q) \le |Q|^{s-1} \E^{+}_s (Q)$, 
	the condition $|Q|^2 |\G| \le p^2$,  
	and induction assumption, 
	we get
	$$
	\frac{2s|Q|^{2} \E^{+}_{2s-1} (Q)}{p} \le \frac{2s |Q|^{s+1} \E^{+}_s (Q)}{p}
	\le
	$$
	$$
	\le
	\frac{2s |Q|^{s+1}}{p} \cdot L^s |Q|^{s}  \cdot 2^{2s+3} \left(
	2^{4k+2} L^4 + 16^{(k-1)^2} C^{k-2}_* L^{4(k-2)} \cdot 
	|\G|^{-\frac{k}{2}} p  \right) 
	\le
	$$
	$$		
	\le
	2^{4s+2} L^{2s} |Q|^{2s}  \cdot \left(
	2^{4k+6} L^4 + 16^{k^2} C^{k-1}_* L^{4(k-1)} \cdot 
	|\G|^{-\frac{(k+1)}{2}} p  \right)
	\,.
	$$
	Hence combining the last estimate with (\ref{tmp:06.05_1}), we derive 
	$$
	\E^{+}_{2^{k+1}} (Q)
	\le
	2^{2^{k+2}+3} L^{2^{k+1}} |Q|^{2^{k+1}}  \cdot \left(
	2^{4k+6} L^4 + 16^{k^2} C^{k-1}_* L^{4(k-1)} \cdot 
	|\G|^{-\frac{(k+1)}{2}} p  \right)
	$$
	and thus we have obtained (\ref{f:Q_shift}).

	To get (\ref{f:Q_shift'}), put $l=2^{k-1}$, $k\ge 1$ and consider $\E^{+}_{4l} (Q)$.
	Further
	define $g(x) = r^l_{Q-Q} (x)$ and notice that $\FF{g} (\xi) \ge 0$, $\FF{g} (0) = \E^{+}_l (Q)$. 
	Moreover, taking the Fourier transform and using the Dirichlet principle, we get 
	\begin{equation}\label{tmp:12.04_1'}
	\E^{+}_{4l} (Q) 
	= \frac{1}{p^3} \sum_{x,y,z} \FF{g} (x) \FF{g} (y) \FF{g} (x+z) \FF{g}(y+z)
	=
	\frac{\E^{+} (\FF{g})}{p^3} 
	\le 
	\frac{4 \E^{+}_l (Q) \E^{+}_{3l} (Q)}{p} 
	+ \frac{4 L^4 (2\o)^4}{p^3} \E^{+} (G) \,,
	\end{equation}
	where $G=\{ \xi ~:~ \omega < \FF{g} (\xi) \le 2 \omega \} \subseteq \F_p^*$, and 
	$\omega \ge 2^{-3} \E^{+}_{4l} (Q) |Q|^{-3l} := \rho_*$
	because the sum over $\FF{g}(\xi) < \rho_*$ by formula (\ref{f:inverse}) does not exceed 
	$$	
	\frac{4\rho_*}{p^3} \cdot \sum_{x,y,z} \FF{g} (y) \FF{g} (x+z) \FF{g}(y+z)
	=
	4\rho_* g^3 (0) =  4 \rho_* |Q|^{3l} \,.
	$$
	Further in view of the Parseval identity, we see that  
	\begin{equation}\label{tmp:P_energy}
	\o^2 |G| \le \sum_{\xi \in G} \FF{g} (\xi)^2 \le  p \E^{+}_{2l} (Q) 
	\,, 
	\end{equation}
	and by formula (\ref{f:inverse}) 
	\begin{equation}\label{tmp:P_energy'}
	\o |G| \le  \sum_{\xi \in G} \FF{g} (\xi) = p g(0) = p |Q|^l \,.
	\end{equation}
	Clearly, $G$ is $\G$--invariant set (again it is the crucial point of the proof). 
	Further 
	returning to (\ref{tmp:12.04_1'})
	and applying 
	Lemma \ref{l:AA_small_energy}, 
	we see that 
	$$
	\E^{+}_{4l} (Q) \le
	\frac{4 \E^{+}_l (Q) \E^{+}_{3l} (Q)}{p} + \frac{2^6 L^4 \o^4}{p^3} \E^{+} (G) 
	\le
	\frac{4\E^{+}_l (Q) \E^{+}_{3l} (Q)}{p} + \frac{2^6 C_* L^4 \o^4}{p^3}
	\left( \frac{|G|^4}{p} + \frac{|G|^3}{|\G|^{1/2}} \right) 
	=
	$$
	$$
	=
	\frac{4\E^{+}_l (Q) \E^{+}_{3l} (Q)}{p} + \E'_{4l} (Q)	\,.
	$$  
	Applying (\ref{tmp:P_energy}), (\ref{tmp:P_energy'}), we get 
	$$
	\E'_{4l} (Q) \le 2^6 C_* L^4 |Q|^{4l} + \frac{2^6 C_* L^4 (\o|G|)^2 \o^2 |G|}{|\G|^{1/2} p^3}
	\le
	2^6 C_* L^4 |Q|^{4l} + 2^6 C_* L^4 |Q|^{2l} \E^{+}_{2l} (Q) |\G|^{-1/2} \,.
	$$
	It follows that
	\begin{equation}\label{f:E_4l_ind}
	\E^{+}_{4l} (Q) \le \frac{4\E^{+}_l (Q) \E^{+}_{3l} (Q)}{p}
	+ 
	2^6 C_* L^{4} |Q|^{2l} \E^{+}_{2l} (Q) \left( \frac{|Q|^{2l}}{\E^{+}_{2l} (Q)} +  \frac{1}{|\G|^{1/2}} \right) \,.
	\end{equation}
	Further estimating the first term of (\ref{f:E_4l_ind}) very roughly as 
	$$
	\frac{\E^{+}_l (Q) \E^{+}_{3l} (Q)}{p} \le \frac{|Q|^{l+1} \E^{+}_{3l} (Q)}{p} \le \frac{|Q|^{2l+1} \E^{+}_{2l} (Q)}{p} \,,
	$$
	we get in view of our condition $|Q|^2 |\G| \le p^2$ that this term is less than $L^{4} |Q|^{2l} \E^{+}_{2l} (Q) |\G|^{-1/2}$.
	Whence
	\begin{equation}\label{f:E_4l_ind-}
	\E^{+}_{4l} (Q) \le 2^7 C_* 
	L^{4} |Q|^{2l} \E^{+}_{2l} (Q) \left( \frac{|Q|^{2l}}{\E^{+}_{2l} (Q)} +  \frac{1}{|\G|^{1/2}} \right) \,.
	\end{equation}
	Notice that the term $\frac{|Q|^{2l}}{\E^{+}_{2l} (Q)} +  \frac{1}{|\G|^{1/2}} \le 2$. 
	Applying bound (\ref{f:E_4l_ind}) exactly $0\le s \le k$ times, where $s$ is the maximal number (if it exists) such that the second term $\frac{1}{|\G|^{1/2}}$ in formula (\ref{f:E_4l_ind-}) dominates,
	we obtain 
	\begin{equation}\label{f:19.04.2017_1}
	\E^{+}_{2^{k+1}} (Q) \le (2^8 C_*)^s 
	L^{4s} |\G|^{-s/2} |Q|^{2^{k} + \dots + 2^{k-s+1}} 
	\E^{+}_{2^{k-s+1}} (Q) \left( \frac{|Q|^{2^{k-s+1}}}{\E^{+}_{2^{k-s+1}} (Q)} +  \frac{1}{|\G|^{1/2}} \right)
	\,.
	\end{equation}
	Now by the definition of $s$, we see that the first  term in (\ref{f:19.04.2017_1}) dominates.
	Whence, using (\ref{f:E_4l_ind}), (\ref{f:E_4l_ind-})  one more time (if $s<k$), we 
	get 
	$$
	\E^{+}_{2^{k+1}} (Q) \le 2(2^8 C_*)^{s} 
	L^{4s} |\G|^{-s/2} |Q|^{2^{k+1} - 2^{k-s+1}} \cdot |Q|^{2^{k-s+1}}
	=
	$$
	\begin{equation}\label{tmp:06.05_2} 
	=
	2(2^8 C_*)^{s}  L^{4s} |\G|^{-s/2} |Q|^{2^{k+1}} \,.
	\end{equation}
	From the assumption $|\G|^{\frac{k+2}{2}} \ge |Q| \log^{4k} |Q|$, it follows that $|\G|\ge  |Q|^{2/(k+2)} \log^{8k/(k+2)} |Q|$. 
	Hence bound (\ref{tmp:06.05_2}) is much better than (\ref{f:Q_shift'}) if $s<k$.
	If $s=k$, then by the same calculations, we 
	derive 
	$$
	\E^{+}_{2^{k+1}} (Q) \le (2^8 C_*)^{k}
	L^{4k} |\G|^{-k/2} \E^{+}_2 (Q) |Q|^{2^{k+1}-2} \,.
	$$
	Since $|Q|^2 |\G| \le p^2$ by Lemma \ref{l:AA_small_energy}, it follows that 
	$\E^{+} (Q) \le 2C_* |Q|^3 / |\G|^{1/2}$
	and hence
	$$
	\E^{+}_{2^{k+1}} (Q) \le (2^8 C_*)^{k+1} L^{4k} |\G|^{-(k+1)/2} |Q|^{2^{k+1}+1} \,.
	$$
	Further by the choice of $k$, namely, 
	$|\G|^{\frac{k+2}{2}} \ge |Q| \log^{4k} |Q|$ 
	we see that the last bound is better than (\ref{f:Q_shift'}). 
	Finally, if 
	$s=0$, then by definition $\E^{+}_{2^{k}} (Q) \le |Q|^{2^{k}} |\G|^{1/2}$ 
	and hence $\E^{+}_{2^{k+1}} (Q) \le |Q|^{2^{k+1}} |\G|^{1/2}$. 
	This completes the proof. 
	$\hfill\Box$
\end{proof}

\begin{remark}
	From the second part of the arguments above one can derive explicit  bounds for the energies $\E^{+}_{s} (Q)$ for small $s$.
	For example, 
	$$
	\E_{4} (Q) \ll \frac{|Q|^2 \E_3 (Q)}{p} +  (\log |\G|)^{4} |Q|^4 + (\log |\G|)^{4} |Q|^2 \E(Q) |\G|^{-1/2}  \,.
	$$   
\end{remark}

Now we obtain an uniform upper bound for size of the intersection of an additive shift of any $\G$--invariant set. 
Our bound (\ref{f:Q_cap_Q+x'}) is especially effective if sizes of $Q_1$,$Q_2$ are comparable with size of $\G$, 
namely, $|Q_1|,|Q_2| \ll |\G|^C$, $C$ is an absolute constant (which can be large). 
In this case the number $k$ below is a constant as well.

\begin{corollary}
	Let $\G \subseteq \F_p^*$ be a multiplicative subgroup, 
	$|\G| \ge p^\d$, $\d>0$, and $Q_1,Q_2 \subseteq \F^*_p$ be two sets with $Q_1 \G = Q_1$, $Q_2 \G = Q_2$,
	$|Q_1|^2 |\G| \le p^2$, $|Q_2|^2 |\G| \le p^2$.
	Put $Q=\max\{|Q_1|, |Q_2| \}$. 
	Then for any $x\neq 0$, one has
	\begin{equation}\label{f:Q_cap_Q+x}	
	|Q_1\cap (Q_2+x)| \ll  \sqrt{|Q_1||Q_2|} \log Q \cdot  p^{-\frac{\d}{2^{7+2\d^{-1}}}}   \,.
	\end{equation}
	Further choose $k\ge 1$ such that 
	$|\G|^{\frac{k+2}{2}} \ge Q \log^{4k} Q$.
	Then  for an arbitrary $x\neq 0$ the following holds
	\begin{equation}\label{f:Q_cap_Q+x'}	
	|Q_1\cap (Q_2+x)| \ll  \sqrt{|Q_1||Q_2|} \cdot |\G|^{-\frac{1}{4} \cdot 2^{-k}} \,.
	\end{equation}
	\label{c:Q_cap_Q+x}
\end{corollary}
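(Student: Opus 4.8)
The plan is to exploit the multiplicative $\G$--invariance of $Q_1,Q_2$: it forces $r_{Q_1-Q_2}$ to be constant on multiplicative orbits, which lets us trade a factor $|\G|$ for a high additive moment that is in turn controlled by Theorem~\ref{t:Q_shift}. Concretely, since $Q_1\G=Q_1$ and $Q_2\G=Q_2$, for every $g\in\G$ and every $y$ the substitution $(q_1,q_2)\mapsto(g^{-1}q_1,g^{-1}q_2)$ gives
$$r_{Q_1-Q_2}(gy)=|\{(q_1,q_2)\in Q_1\times Q_2:\ q_1-q_2=gy\}|=r_{Q_1-Q_2}(y)\,,$$
so $y\mapsto|Q_1\cap(Q_2+y)|=r_{Q_1-Q_2}(y)$ is constant on each orbit $\G y$. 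Writing $t:=|Q_1\cap(Q_2+x)|=r_{Q_1-Q_2}(x)$ and using $|\G x|=|\G|$ for $x\neq0$, we get for every integer $n\ge2$
$$t^n\,|\G|=\sum_{y\in\G x}r_{Q_1-Q_2}(y)^n\ \le\ \sum_{y}r_{Q_1-Q_2}(y)^n\,.$$
Thus the problem reduces to controlling the full $n$--th moment of $r_{Q_1-Q_2}$.

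For that moment I would use the elementary splitting inequality: for finite $A,B$ and $n\ge2$,
$$\sum_{y}r_{A-B}(y)^n\ \le\ \big(\E^{+}_n(A)\big)^{1/2}\big(\E^{+}_n(B)\big)^{1/2}\,.$$
Indeed $\sum_y r_{A-B}(y)^n$ counts the tuples $(a_1,b_1,\dots,a_n,b_n)\in(A\times B)^n$ with $a_1-b_1=\dots=a_n-b_n$; recording the common offset pattern $u_j:=a_1-a_j=b_1-b_j$ $(2\le j\le n)$ one gets $\sum_y r_{A-B}(y)^n=\sum_{u_2,\dots,u_n}N_A(\vec u)N_B(\vec u)$ with $N_A(\vec u):=|A\cap(A+u_2)\cap\dots\cap(A+u_n)|$, while a direct count gives $\sum_{\vec u}N_A(\vec u)^2=\sum_y r_{A-A}(y)^n=\E^{+}_n(A)$; Cauchy--Schwarz in $\vec u$ finishes it. Combining the two displays with $A=Q_1$, $B=Q_2$,
$$|Q_1\cap(Q_2+x)|\ \le\ \left(\frac{\big(\E^{+}_n(Q_1)\,\E^{+}_n(Q_2)\big)^{1/2}}{|\G|}\right)^{1/n}\,.$$

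It remains to insert the moment bounds of Theorem~\ref{t:Q_shift} with $n=2^{k+1}$. For (\ref{f:Q_cap_Q+x'}) I would invoke (\ref{f:Q_shift'}): for the prescribed $k\ge1$ one has $|\G|^{(k+2)/2}\ge Q\log^{4k}Q\ge|Q_i|\log^{4k}|Q_i|$ and $|Q_i|^2|\G|\le p^2$, hence $\E^{+}_{2^{k+1}}(Q_i)\le(2^8C_*)^{k+1}|Q_i|^{2^{k+1}}|\G|^{1/2}$ for $i=1,2$; substituting and using $(k+1)/2^{k+1}\le1/2$ gives $|Q_1\cap(Q_2+x)|\ll\sqrt{|Q_1||Q_2|}\,|\G|^{-1/2^{k+2}}$, which is (\ref{f:Q_cap_Q+x'}) since $1/2^{k+2}=\frac14\cdot2^{-k}$. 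For (\ref{f:Q_cap_Q+x}) I would instead use (\ref{f:Q_shift}), choosing $k+1=\lceil2\d^{-1}\rceil$ (admissible once $p$ is large in terms of $\d$, since then $2^{64k}C_*^4\le p^\d\le|\G|$); this makes $|\G|^{(k+1)/2}\ge p$, so the $p$--term in (\ref{f:Q_shift}) is dominated and $\E^{+}_{2^{k+1}}(Q_i)\ll_\d(\log Q)^{O_\d(1)}|Q_i|^{2^{k+1}}$. Inserting this and using $|\G|\ge p^\d$ yields $|Q_1\cap(Q_2+x)|\ll_\d(\log Q)^{O_\d(1)}\sqrt{|Q_1||Q_2|}\,|\G|^{-1/2^{k+1}}$, a bound of the shape (\ref{f:Q_cap_Q+x}), the bounded power of $\log Q$ and the slack in the exponent of $p$ being absorbed into the crude constants; the cases $t=0$ and $\G=\F_p^*$ are trivial.

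The only idea that has to be spotted is the first one: multiplicative $\G$--invariance of $Q_1,Q_2$ forces $r_{Q_1-Q_2}$ to be constant on $\G$--orbits, which is precisely what produces the $|\G|$ in the denominator; everything after that is Cauchy--Schwarz bookkeeping, the hard input being already packaged in Theorem~\ref{t:Q_shift} (hence in Rudnev's point--plane bound and Corollary~\ref{c:Holder_Q}). The main technical nuisance is matching the dyadic index $n=2^{k+1}$ to the relevant regime in each of the two bounds --- $|\G|$ against $p$ for (\ref{f:Q_cap_Q+x}), $|\G|$ against $Q$ for (\ref{f:Q_cap_Q+x'}) --- and keeping the accumulated logarithmic and absolute factors in hand; getting the power of $\log Q$ in (\ref{f:Q_cap_Q+x}) down to exactly $1$ would require a one--step dyadic argument in place of the inductive bound (\ref{f:Q_shift}).
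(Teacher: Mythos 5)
Your proposal is correct and is essentially the paper's own argument: the $\G$--invariance of $Q_1,Q_2$ makes $r_{Q_1-Q_2}$ constant on the orbit $\G x$, giving $|\G|\,|Q_1\cap(Q_2+x)|^{2^{k+1}}\le\sum_y r^{2^{k+1}}_{Q_1-Q_2}(y)$, Cauchy--Schwarz reduces this to $\E^{+}_{2^{k+1}}(Q_1),\E^{+}_{2^{k+1}}(Q_2)$, and Theorem \ref{t:Q_shift} (formulas (\ref{f:Q_shift}) and (\ref{f:Q_shift'}) respectively) finishes exactly as in the paper. The only deviation is your choice $k+1=\lceil 2\d^{-1}\rceil$ in the first part instead of the paper's $k=\lceil 2\log p/\log|\G|+4\rceil$, which merely costs an extra $(\log Q)^{O_\d(1)}$ that is legitimately absorbed, for $p$ large in terms of $\d$, by the slack between $|\G|^{-1/2^{k+1}}$ and the stated exponent $p^{-\d/2^{7+2\d^{-1}}}$.
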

\begin{proof}
	From the conditions $|Q_1|^2 |\G| \le p^2$, $|Q_2|^2 |\G| \le p^2$, it follows that $|\G| \le p^{2/3}$. 
	Put $L = \log Q$. 
	On the one hand, applying 
	the Cauchy--Schwarz inequality, we obtain 
	$$
	\sum_{y} r^{2^{k+1}}_{Q_1 - Q_2} (y) 
	\le (\E^{+}_{2^{k+1}} (Q_1))^{1/2} (\E^{+}_{2^{k+1}} (Q_2))^{1/2} \,.
	$$ 
	On the other hand,  by formula (\ref{f:Q_shift}) of Theorem \ref{t:Q_shift} and $\G$--invariance of $Q_1$, $Q_2$, we have 
	$$
	|\G| |Q_1 \cap (Q_2+x)|^{2^{k+1}} 
	\le 
	\sum_{y} r^{2^{k+1}}_{Q_1 - Q_2} (y) 
	\le
	$$
	$$ 
	\le
	2^{2^{k+2}+3} L^{2^{k+1}} (|Q_1| |Q_2|)^{2^{k}} 
	\left(
	2^{4k+6} \log^4 |Q| + 16^{k^2} C^{k-1}_* L^{4(k-1)} \cdot 
	|\G|^{-\frac{(k+1)}{2}} p  \right) \,,
	$$
	provided $2^{64k} C^4_* \le |\G|$.
	As in Corollary \ref{c:exp_sums} choosing $k = \lceil 2 \log p /\log |\G| + 4 \rceil \le 2/\d+5$ and 
	applying an analogue of 
	(\ref{tmp:k_choice}) which 
	holds 
	for large $p$, namely,
	$$
	\frac{p \log^{4(k-1)} |Q|}{|\G|^{k/2}} \ll 1 
	$$
	we obtain 
	$$
	|Q_1\cap (Q_2+x)| \ll L \sqrt{|Q_1||Q_2|} \cdot (|\G|^{-1/2^{k+2}} + |\G|^{-1/2^{k+2}})
	\ll
	L  \sqrt{|Q_1||Q_2|} |\G|^{-1/2^{k+2}} 
	\ll 
	$$
	$$ 
	\ll
	L\sqrt{|Q_1||Q_2|} p^{-\frac{\d}{2^{7+2\d^{-1}}}} 
	$$
	and it easy to insure that inequality $2^{64k} C^4_* \le |\G|$ takes place for sufficiently large $p$.

	To derive (\ref{f:Q_cap_Q+x'}), we just use the second formula  (\ref{f:Q_shift'}) of Theorem \ref{t:Q_shift} and the previous calculations.
	This completes the proof. 
	$\hfill\Box$
\end{proof}

\bigskip

\begin{remark}
	It is known, see, e.g., \cite{KS1} that if $\G \subseteq \F_p^*$ is a multiplicative subgroup with $|\G| < p^{3/4}$, then for any $x\neq 0$ one has $|\G\cap (\G+x)| \ll |\G|^{2/3}$ and this bound it tight in some regimes. 
	One can extend this to larger $\G$--invariant sets and obtain a lower bound of a comparable  quality. It gives  a lower estimate in (\ref{f:Q_cap_Q+x}).

	Indeed, let $\G \subseteq \F_p^*$ be a multiplicative subgroup with $|\G| < p^{1/2}$. 
	Consider $R=R[\G]$ and $Q=Q[\G]$.  
	It was proved in \cite{Shkr_tripling} that $|R| \gg |\G|^2 / \log |\G|$
	and one can check that $R = 1-R$, see, e.g., \cite{MPR-NRS}.
	Finally, the set $Q$ is $\G$--invariant and it is easy to check \cite{s_diff} that $|Q| \le |\G|^3$. 
	Whence 
	$$
	|Q \cap (1-Q)| \ge |R| \gg \frac{|\G|^2}{\log |\G|} 
	\gg 
	\frac{|Q|^{2/3}}{\log |Q|} \,.
	$$

	Also, notice that if $|\G|<p^{1/2}$ and $|Q [\G]|^2 |\G| \le p^2$, then $|Q [\G]| \gg |\G|^{2+c}$ for some $c>0$, see the first part of Corollary \ref{c:superquadratic} from the next section.   
	\label{r:2/3}
\end{remark}

\bigskip 

Corollary 
\ref{c:Q_cap_Q+x}
gives a nontrivial upper bound  for the common additive energy of an arbitrary invariant set and {\it any} subset of $\F_p$.

\begin{corollary}
	Let $\G \subseteq \F_p^*$ be a multiplicative subgroup, 
	$|\G| \ge p^\d$, $\d>0$, and $Q \subseteq \F^*_p$ be a set with  $Q \G = Q$, $|Q|^2 |\G| \le p^2$. 
	Then for any set $A\subseteq \F_p$, 
	one has
	\begin{equation}\label{f:E(Q,A)}	
	\E^{+} (A,Q) \ll |Q| |A|^2 \cdot p^{-\frac{\d}{2^{7+2\d^{-1}}}} \log |Q| + |A| |Q| \,.
	\end{equation}
	Further for an arbitrary $\a \neq 0$ the following holds 
	\begin{equation}\label{f:E(Q,A)+}	
	\E^{\times} (A,Q+\a) \ll |Q| |A|^2 \cdot p^{-\frac{\d}{2^{7+2\d^{-1}}}} \log |Q| + |A| |Q| \,.
	\end{equation}
	In particular, 
	\begin{equation}\label{f:E(Q,A)'}	
	|A+Q| \gg |Q| \cdot \min \{ |A|, p^{\frac{\d}{2^{7+2\d^{-1}}}} \log^{-1} |Q| \} \,,
	\end{equation}
	and
	\begin{equation}\label{f:E(Q,A)'+}	
	|A(Q+\a)| \gg |Q| \cdot \min \{ |A|, p^{\frac{\d}{2^{7+2\d^{-1}}}} \log^{-1} |Q| \} \,.
	\end{equation}	
	Further if 
	$k\ge 1$ is chosen as  
	$|\G|^{\frac{k+2}{2}} \ge |Q| \log^{4k} |Q|$, 
	then one can replace the quantity\\ 
	$p^{\frac{\d}{2^{7+2\d^{-1}}}} \log^{-1} |Q|$ above by $|\G|^{-\frac{1}{4} \cdot 2^{-k}}$. 
	\label{c:E(Q,A)}
\end{corollary}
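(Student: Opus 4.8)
The plan is to read all four estimates off the uniform intersection bound of Corollary~\ref{c:Q_cap_Q+x}, using the standard rewriting of an energy as a weighted sum of sizes of intersections of translates (for $\E^{+}$), respectively of translates of dilates (for $\E^{\times}$). For the additive part I would expand
\begin{equation*}
\E^{+} (A,Q) = \sum_{a_1,a_2\in A} r_{Q-Q} (a_1-a_2) = |A||Q| + \sum_{x\neq 0} r_{A-A} (x)\, |Q\cap (Q+x)| \,,
\end{equation*}
where the first summand is the diagonal $a_1=a_2$ (for which $r_{Q-Q}(0)=|Q|$). In the remaining sum I would bound $|Q\cap(Q+x)|$ by its maximum over $x\neq 0$ and use $\sum_{x\neq 0} r_{A-A}(x)\le |A|^2$; Corollary~\ref{c:Q_cap_Q+x} applied with $Q_1=Q_2=Q$ (its hypotheses $|Q|^2|\G|\le p^2$, $|\G|\ge p^\d$ are precisely ours) gives $\max_{x\neq 0}|Q\cap(Q+x)|\ll |Q|\log|Q|\cdot p^{-\frac{\d}{2^{7+2\d^{-1}}}}$, which is (\ref{f:E(Q,A)}).

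For the multiplicative part I would first reduce to $0\notin A$ and $Q+\a\subseteq\F^*_p$ (deleting $0$ from $A$, or the single element $0$ from $Q+\a$ when $-\a\in Q$, affects only lower--order terms in the energy and at most one element of the product set). Then the analogue is
\begin{equation*}
\E^{\times} (A,Q+\a) = \sum_{a_1,a_2\in A} \left|(Q+\a)\cap \tfrac{a_1}{a_2}(Q+\a)\right| = |A||Q| + \sum_{t\neq 1} r_{A/A}(t)\left|(Q+\a)\cap t(Q+\a)\right| \,,
\end{equation*}
the first summand again being the diagonal (ratio $t=1$). The key identity, obtained from the substitution $q=y-\a$, is that for $t\neq 1$
\begin{equation*}
\left|(Q+\a)\cap t(Q+\a)\right| = \left|Q\cap (tQ+\a(t-1))\right| \,.
\end{equation*}
Here $tQ$ is again $\G$--invariant because $\G(tQ)=t(\G Q)=tQ$, with $|tQ|=|Q|$ and hence $|tQ|^2|\G|\le p^2$, while $\a(t-1)\neq 0$ since $\a\neq 0$ and $t\neq 1$; so Corollary~\ref{c:Q_cap_Q+x} applies with $Q_1=Q$, $Q_2=tQ$ and gives the same uniform bound. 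Summing against $\sum_{t\neq 1} r_{A/A}(t)\le |A|^2$ yields (\ref{f:E(Q,A)+}).

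It then remains to derive the set--size statements and the sharp version. By Cauchy--Schwarz $|A+Q|\ge |A|^2|Q|^2/\E^{+}(A,Q)$ and, since $|Q+\a|=|Q|$, $|A(Q+\a)|\ge |A|^2|Q|^2/\E^{\times}(A,Q+\a)$; writing the energy bound as $\ll |A||Q|(1+|A|E)$ with $E:=\log|Q|\cdot p^{-\frac{\d}{2^{7+2\d^{-1}}}}$, the quotient is $\gg |Q|/(1+|A|E)\gg |Q|\min\{|A|,E^{-1}\}$ with $E^{-1}=p^{\frac{\d}{2^{7+2\d^{-1}}}}\log^{-1}|Q|$, which gives (\ref{f:E(Q,A)'}) and (\ref{f:E(Q,A)'+}). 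Finally, under the stated choice of $k$ with $|\G|^{(k+2)/2}\ge |Q|\log^{4k}|Q|$, I would rerun the whole argument feeding in the second bound (\ref{f:Q_cap_Q+x'}) of Corollary~\ref{c:Q_cap_Q+x} in place of the first; this replaces $E$ by $\ll|\G|^{-\frac14 2^{-k}}$ throughout, hence $E^{-1}$ by $|\G|^{\frac14 2^{-k}}$.

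I do not expect a serious obstacle here: the statement is essentially a corollary of Corollary~\ref{c:Q_cap_Q+x}. The single point that requires an idea is the observation that a nonzero dilate of a $\G$--invariant set is again $\G$--invariant, which is exactly what allows the intersection bound to be used for $\E^{\times}(A,Q+\a)$; the remaining steps --- isolating the diagonal term, disposing of the degenerate contributions arising when $0\in A$ or $0\in Q+\a$, and the Cauchy--Schwarz arithmetic --- are routine bookkeeping.
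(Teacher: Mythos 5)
Your proposal is correct and follows essentially the same route as the paper: expand the energies over differences (resp.\ ratios), isolate the diagonal, bound the off-diagonal intersections uniformly via Corollary~\ref{c:Q_cap_Q+x} using the identity $|(Q+\a)\cap t(Q+\a)|=|Q\cap(tQ+\a(t-1))|$ together with the $\G$--invariance of $tQ$, and finish with Cauchy--Schwarz, substituting the second bound of Corollary~\ref{c:Q_cap_Q+x} for the sharp version. The only difference is that you spell out the degenerate cases ($0\in A$, $0\in Q+\a$) which the paper passes over silently.
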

\begin{proof}
	Inequalities  (\ref{f:E(Q,A)'}), (\ref{f:E(Q,A)'+}) follow from (\ref{f:E(Q,A)}), (\ref{f:E(Q,A)+})  via the Cauchy--Schwarz inequality, so it is enough to obtain the required  upper bound for the additive energy of $A$ and $Q$ and for  the multiplicative energy of $A$ and $Q+\a$. 
	By Corollary \ref{c:Q_cap_Q+x}, we have 
	$$
	\E^{+} (A,Q) = \sum_x r_{A-A} (x) r_{Q-Q} (x) 
	= |A||Q| + \sum_{x\neq 0} r_{A-A} (x) r_{Q-Q} (x)
	\ll
	|A||Q| + |Q| |A|^2 \cdot p^{-\frac{\d}{2^{7+2\d^{-1}}}} \log |Q|
	$$
	as required. 
	Similarly 
	$$
	\E^{\times} (A,Q+\a) 
	\ll |A||Q| + \sum_{x\neq 0,1}  r_{A/A} (x) r_{(Q+\a)/(Q+\a)} (x)
	\ll
	|A||Q| + |Q| |A|^2 \cdot p^{-\frac{\d}{2^{7+2\d^{-1}}}} \log |Q|
	$$
	because in view of Corollary \ref{c:Q_cap_Q+x} one has
	$$
	r_{(Q+\a)/(Q+\a)} (x) = |Q\cap (xQ + \a(x-1))| 
	\ll
	|Q| \cdot p^{-\frac{\d}{2^{7+2\d^{-1}}}} \log |Q| \,.
	$$
	So, we have obtained bounds (\ref{f:E(Q,A)})---(\ref{f:E(Q,A)'+}) with $p^{\frac{\d}{2^{7+2\d^{-1}}}} \log^{-1} |Q|$ and to replace it by $|\G|^{-\frac{1}{4} \cdot 2^{-k}}$ one should use the second part of Corollary \ref{c:Q_cap_Q+x}. 
	This completes the proof. 
	$\hfill\Box$
\end{proof}

\bigskip

From (\ref{f:E(Q,A)'}) one can obtain that for any multiplicative subgroup $\G \subseteq \F_p^*$ there is $N$ such that $N\G = \F_p$ and $N\ll \d^{-1} 4^{\d^{-1}}$.
The results of comparable quality were obtained in \cite{GK}.

\section{The proof of the main result}
\label{sec:small_prod}


In this section we obtain an upper bound for $\T^{+}_k (A)$  (see Theorem \ref{t:T_k_G_M})
and an upper bound for $\E^{+}_k (A)$ (see Theorem \ref{t:QG}) 
in the case when 
size of the product set $AB$ is small comparable to $A$, where  $B$ is a sufficiently large set.
From the last result we derive our quantitative asymmetric sum--product Theorem \ref{t:T_k_G_M_intr} from the introduction. 
Let us begin with an upper bound for $\T^{+}_k (A)$.

\begin{theorem}
	Let $A,B \subseteq \F_p$  be sets, $M\ge 1$ be a real number and 
	$|AB| \le M|A|$. 
	Then for any $k\ge 2$, 
	$2^{16 k} M^{2^{k+1}} C^2_* \log^8 |A| \le |B|^{}$,  
	one has 
	\begin{equation}\label{f:T_k_G_M}
	\T^{+}_{2^k} (A) \le	
	2^{4k+6} C_* \log^4 |A| \cdot \frac{M^{2^{k}}|A|^{2^{k+1}}}{p} +  16^{k^2} C_*^{k-1} M^{2^{k+1}}  \log^{4(k-1)} |A| \cdot 
	|A|^{2^{k+1} - 4} |B|^{-\frac{(k-1)}{2}} \E^{+} (A) \,.
	\end{equation}
	\label{t:T_k_G_M}
\end{theorem}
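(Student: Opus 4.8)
The plan is to mimic the structure of the proof of Theorem~\ref{t:T_k_G} for multiplicative subgroups, but replacing the role of $\G$-invariance with the hypothesis $|AB|\le M|A|$. The key substitution is that, whereas for a subgroup $\G$ one has $r_{A-A}(x)\le r_{\G A-\G A}(x\gamma)$ for any $\gamma\in\G$ and can average over $\gamma$, here we use the elementary inequality $r_{A-A}(x)\le r_{AB-AB}(xb)$ for every $b\in B$ and average over $b\in B$ as in Corollary~\ref{c:change_QG}. So the natural tool is Corollary~\ref{c:change_QG} itself: for any level set $P\subseteq\F_p^*$ and any $k\ge 1$,
\begin{equation*}
\left(\sum_{x\in P} r^k_{A-A}(x)\right)^4 \le C_*|A|^{2k}\,\E^{+}_{2k}(AB)\left(\frac{|P|^4}{p}+\frac{|P|^3}{|B|^{1/2}}\right)\,.
\end{equation*}
This is the analogue of the application of Lemma~\ref{l:AA_small_energy} in the subgroup proof. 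The presence of $\E^{+}_{2k}(AB)$ rather than $\E^{+}(\G)$ on the right is handled by Pl\"{u}nnecke--Ruzsa (Lemma~\ref{l:Plunnecke_R}): since $|AB|\le M|A|$, after passing to a large subset we control $|A B^{m}|\le M^{m}|A|$ for all $m$, and hence bound higher energies $\E^{+}_{2k}(AB)$ in terms of $M$, $|A|$ and a residual $\E^{+}(A)$.

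The argument itself would be an induction proving, for each $s\ge 2$, the one-step inequality
\begin{equation*}
\T^{+}_{2s}(A)\le 32 C_* s^4\log^4|A|\cdot\left(\frac{M^{2s}|A|^{4s}}{p}+M^{2s}|A|^{2s-?}|B|^{-1/2}\T^{+}_s(A)\right)\,,
\end{equation*}
then iterating it $k-1$ times starting from $\T^{+}_2(A)=\E^{+}(A)$. To get the one-step bound one writes $\T^{+}_{2s}(A)=\sum_{x,y,z} r_{sA}(x)r_{sA}(y)r_{sA}(x+z)r_{sA}(y+z)$, peels off the contribution of $x=0$ (which is $\le 4r_{sA}(0)|A|^{s}\T^{+}_s(A)\le 4|A|^{2s-1}\T^{+}_s(A)$, using $r_{sA}(0)\le |A|^{s-1}$), dyadically decomposes the remaining $x$ into level sets $P_j=\{x: \rho 2^{j-1}<r_{sA}(x)\le\rho 2^j\}$ with $\rho=\T^{+}_{2s}(A)/(8|A|^{3s})$, applies H\"{o}lder to reduce the quadruple sum to $L^4\max_j\E^{+}(P_j)$ where $L\asymp s\log|A|$, and bounds $\E^{+}(P_j)$ via the displayed consequence of Corollary~\ref{c:change_QG}. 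The two a priori constraints $|P|\Delta^2\le\T^{+}_s(A)$ and $|P|\Delta\le|A|^s$ (Parseval-type bounds on the level sets of $r_{sA}$) convert the factor $|P|^3\Delta^4$ into $|A|^{2s}\T^{+}_s(A)$, exactly as in the subgroup case. One then feeds in $\E^{+}_{2k}(AB)$ controlled by Pl\"{u}nnecke--Ruzsa as above, tracking the accumulated constants $16^{k^2}$, $C_*^{k-1}$, $M^{2^{k+1}}$ and the logarithmic powers, and checking that the hypothesis $2^{16k}M^{2^{k+1}}C_*^2\log^8|A|\le|B|$ is precisely what is needed to absorb the log and $M$ factors into a power of $|B|$ (ensuring $|B|^{1/2}\ge 2^{4k+1}C_*\log^4|A|\cdot M^{2^{k}}$ or similar at each stage) and to keep the first ("$p$-term") coefficient clean.

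The main obstacle I expect is bookkeeping rather than conceptual: getting the exponent of $|A|$ in the second term of \eqref{f:T_k_G_M} exactly right (the paper's claim is $|A|^{2^{k+1}-4}|B|^{-(k-1)/2}\E^{+}(A)$, i.e.\ a gain of $|B|^{-1/2}$ per iteration starting from $\E^{+}_2(A)=\E^{+}(A)$, with a compensating $|A|^{-4}$ coming from how the energies $\E^{+}_{2k}(AB)$ and the crude bound $\E^{+}_{2s-1}\le |A|^{s-1}\E^{+}_s$ interact with the Pl\"{u}nnecke factor $M^{2^{k+1}}$). Care is needed because $B$ can be much larger or much smaller than $A$, so one cannot freely use $|AB|\le|A||B|$; the correct moves are $r_{A-A}(x)\le r_{AB-AB}(xb)$ and $|AB^{m}|\le M^m|A|$. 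A secondary subtlety is that the level-set $P$ is no longer $\G$-invariant — that structure is replaced entirely by the product-set inequality inside Corollary~\ref{c:change_QG}, so there is nothing extra to verify there, but one must make sure $P\subseteq\F_p^*$ (guaranteed since we excluded $x=0$) so that $r_{AB-AB}(xb)$ dominates genuinely. Once the one-step inequality is pinned down, the iteration and the final collection of constants proceed exactly as in the display leading to \eqref{tmp:30.03_2}.
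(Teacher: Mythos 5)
Your skeleton (peeling off $x=0$ with $\mathcal{E}\le 4|A|^{2s-1}\T^{+}_s(A)$, dyadic level sets $P_j$ of $r_{sA}$, H\"older reduction to $L^4\max_j\E^{+}(f_j)$, the constraints $|P|\D^2\le\T^{+}_s(A)$ and $|P|\D\le|A|^s$, and iterating a one-step inequality $k-1$ times) is indeed how the paper argues, but the central mechanism you propose is the wrong tool, and this is a genuine gap. After H\"older you must bound the additive energy $\E^{+}(P)$ of a level set $P$ of the $s$-fold \emph{sum} representation function $r_{sA}$. Corollary \ref{c:change_QG} says nothing about this quantity: it bounds $\sum_{x\in P}r^k_{A-A}(x)$, i.e.\ moments of the \emph{difference} representation function, and it is the tool for the $\E^{+}_k$-theorem (Theorem \ref{t:QG}), not for $\T^{+}_{2^k}$. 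The observation the paper actually uses, and which is missing from your write-up, is that the level set inherits a product-set bound from $A$: if $r_{sA}(x)>\D$ and $b\in B$, then $r_{s(AB)}(xb)\ge\D$, hence $|PB|\,\D\le\sum_{x\in PB}r_{sAB}(x)\le|AB|^s\le M^s|A|^s$. This lets one apply Lemma \ref{l:AA_small_energy} directly to $Q=P$ with multiplier set $B$ and effective doubling $M^s|A|^s/(\D|P|)$, and combining with $|P|\D\le|A|^s$, $|P|\D^2\le\T^{+}_s(A)$ gives the one-step inequality whose $|A|$-exponent you left as a question mark.

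Moreover, the auxiliary input your route requires --- controlling $\E^{+}_{2k}(AB)$ ``in terms of $M$, $|A|$ and a residual $\E^{+}(A)$'' via Pl\"unnecke--Ruzsa --- is not available: Lemma \ref{l:Plunnecke_R} controls cardinalities of iterated product sets $|XB^m|$, not additive energies, and a bound of the requested shape would be essentially as strong as the theorem you are proving (the nearest available statement, Theorem \ref{t:QG}, concerns difference energies, carries no $\E^{+}(A)$ term, and needs lower bounds on $|B|$ in terms of $|A|$). So the proposal cannot be completed along the indicated path; replacing Corollary \ref{c:change_QG} by the $|PB|$ bound plus Lemma \ref{l:AA_small_energy} is exactly the fix, after which the bookkeeping you describe (using $2^{16k}M^{2^{k+1}}C_*^2\log^8|A|\le|B|$ to absorb the $M^{2^k}$ and logarithmic factors, and starting the iteration from $\T^{+}_2(A)=\E^{+}(A)$) goes through as in the subgroup case.
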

\begin{proof}
	We have $B \neq \{ 0\}$ by the condition $2^{16 k} M^{2^{k+1}} C^2_* \log^8 |A| \le |B|^{}$, say. 
	We apply the arguments and the notation of the  proof of Theorem \ref{t:T_k_G}.
	Fix any $s \ge 2$ and put $L:= s \log |A|$. 
	Our intermediate aim is to prove 
	\begin{equation}\label{f:T_2s,T_s_M}
	\T^{+}_{2s} (A) \le C s^4 M^{2s} \log^{4} |A| \cdot  \left( \frac{|A|^{4s}}{p} + \frac{|A|^{2s}}{\sqrt{|B|}} \T^{+}_s (A) \right) \,,
	\end{equation}
	where $C = 2^5 C_*$.
	As in the proof of Theorem \ref{t:T_k_G}, we get  
	$$
	\T^{+}_{2s} (A) 
	\le \frac{8}{5} L^4 (2\D)^4 \E^{+} (P) + \mathcal{E} \,,
	$$
	where 
	\begin{equation}\label{f:E_bound'}
	\mathcal{E} \le 4 |A|^{2s-1} \T^{+}_s (A)
	\end{equation}
	further 
	$\D> \T^{+}_{2s} (A)/(8|A|^{3s})$ is a real number and 
	$P = \{ x ~:~ \D< r_{sA} (x) \le 2\D \} \subseteq \F^*_p$. 
	Moreover, we always have $|P| \D^2 \le \T^{+}_{s} (A)$.

	To proceed as in the proof of Theorem \ref{t:T_k_G} we need to estimate $|PB|$. 
	Observe that for any $x\in PB$ the following holds $r_{sAB} (x) \ge \D$. 
	Thus, we have 
	\begin{equation}\label{f:PB}
	|PB| \D \le
	\sum_{x\in PB} r_{sAB} (x) \le |AB|^s \le M^s |A|^s \,.
	\end{equation}
	Hence using Lemma \ref{l:AA_small_energy}, we obtain
	$$
	\E^{+} (P) 
	\le C_* \left( 
	\frac{M^{2s} |A|^{2s} |P|^2}{\D^2 p} 
	+ \frac{M^{3s/2} |A|^{3s/2} |P|^{3/2}}{\D^{3/2} |B|^{1/2}} \right) \,.
	$$
	Hence in view of estimate (\ref{f:E_bound'}), combining with  $|P|\D \le |A|^s$ and $|P| \D^2 \le \T^{+}_{s} (A)$, we get
	$$
	\T^{+}_{2s} (A) \le \frac{8}{5} (16 C_*) L^4 \D^4 \left( 
	\frac{M^{2s} |A|^{2s} |P|^2}{\D^2 p} 
	+ \frac{M^{3s/2} |A|^{3s/2} |P|^{3/2}}{\D^{3/2} |B|^{1/2}} \right) + 4 |A|^{2s-1} \T^{+}_s (A)
	=
	$$
	$$
	=
	\frac{8}{5} (16 C_*)  L^4 \left( \frac{M^{2s} |A|^{2s} |P|^2 \D^2}{p} + \frac{M^{3s/2} |A|^{3s/2} |P|^{3/2} \D^{5/2}}{|B|^{1/2}} \right) 
	+ 4 |A|^{2s-1} \T^{+}_s (A)
	\le
	$$
	$$
	\le
	\frac{8}{5} (16 C_*)  L^4 \left( \frac{M^{2s} |A|^{4s}}{p} + \frac{M^{3s/2} |A|^{3s/2} (|P| \D^2) (|P| \D)^{1/2}}{|B|^{1/2}} \right) 
	+ 4 |A|^{2s-1} \T^{+}_s (A)
	\le
	$$
	\begin{equation*}\label{tmp:30.03_1_M}
	\le 
	32 C_*  L^4 \left( \frac{M^{2s} |A|^{4s}}{p} + \frac{M^{3s/2} |A|^{2s} \T^{+}_{s} (A)}{|B|^{1/2}} \right) 
	\end{equation*}
	and inequality  (\ref{f:T_2s,T_s_M}) is proved. 
	Here we have used a trivial inequality $|B|^{1/2} \le |A|$ which follows from 
	$|B| \le |AB| \le M|A| \le |B|^{1/2} |A|$ because $M^2 \le 2^{16 k} M^{2^{k+1}} C^2_* \log^8 |A| \le |B|^{}$.

	Now applying formula  (\ref{f:T_2s,T_s_M})  successively $(k-1)$ times, we obtain 
	$$
	\T_{2^k} (A) \le 2^{4k+6} C_* \log^4 |A| 
	\cdot 
	\frac{M^{2^{k}}|A|^{2^{k+1}}}{p} + 16^{k^2} M^{2^{k+1}} C_*^{k-1} \log^{4(k-1)} |A| \cdot |A|^{2^k + \dots + 4} |B|^{-\frac{(k-1)}{2}} \E^{+} (A)  
	\le
	$$
	\begin{equation}\label{tmp:30.03_2_M}
	\le 
	2^{4k+6} C_* \log^4 |A| \cdot \frac{M^{2^{k}}|A|^{2^{k+1}}}{p} +  16^{k^2} M^{2^{k+1}} C_*^{k-1} \log^{4(k-1)} |A| \cdot 
	|A|^{2^{k+1} - 4} |B|^{-\frac{(k-1)}{2}} \E^{+} (A) \,.
	\end{equation}
	To get the first term in we have used our condition $2^{16 k} M^{2^{k+1}} C^2_* \le |B|^{}$
	to insure that $|B|^{1/2} \ge 2^{4k+1} C_* M^{2^k} \log^4 |A|$.
	This completes the proof. 		
	$\hfill\Box$
\end{proof}


\begin{remark}
	It is easy to see that instead of the assumption  $|AB| \ll |A|$ we can assume a weaker condition  $|A^s \cdot \Delta_s (B)| \ll |A|^s$,
	$1<s \le 2^{k-1}$, 
	see formula (\ref{f:PB}).
\end{remark}


The same arguments work in the case of real numbers. 
In this 
situation 
we have no the characteristic  $p$ and hence we have no any restrictions on the parameter $k$.

\begin{theorem}
	Let $A,B \subset \R$  be finite sets, $M\ge 1$ be a real number and 
	$|AB| \le M|A|$.
	Then for any $k\ge 2$, 
	one has 
	\begin{equation}\label{f:T_k_G_MR}
	\T^{+}_{2^k} (A) \le
	16^{k^2} C_*^{k-1} M^{\frac{3}{2} (2^{k}-1)}  \log^{4(k-1)} |A| \cdot |A|^{2^{k+1}-1} |B|^{-\frac{k}{2}} \,.	 
	\end{equation}
	\label{t:T_k_G_MR}
\end{theorem}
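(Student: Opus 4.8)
The plan is to run the very same scheme as in the proof of Theorem \ref{t:T_k_G_M} (which itself mimics Theorem \ref{t:T_k_G}), but over $\R$, where Theorem \ref{t:Misha+} — and hence Lemma \ref{l:AA_small_energy} — carries no $p$-term. Consequently there is no restriction on $k$, the would-be main term $M^{2^k}|A|^{2^{k+1}}/p$ simply disappears, and what remains is a pure doubling recursion for $\T^{+}$, which I would iterate down to $\T^{+}_2(A)=\E^{+}(A)$ and then close off by one final application of Lemma \ref{l:AA_small_energy} to $\E^{+}(A)$.

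\emph{The recursion.} Fix $s\ge 2$ and put $L:=s\log|A|$. I would show that
$$
\T^{+}_{2s}(A)\le 32 C_* s^4\log^4|A|\cdot M^{3s/2}\cdot\frac{|A|^{2s}}{\sqrt{|B|}}\,\T^{+}_{s}(A)\,.
$$
Suppose this fails. Then, exactly as for Theorem \ref{t:T_k_G}, write $\T^{+}_{2s}(A)=\sum_{x,y,z}r_{sA}(x)r_{sA}(y)r_{sA}(x+z)r_{sA}(y+z)$, split off the contribution $\mathcal{E}$ of $x=0$ or $y=0$ (so $\mathcal{E}\le 4|A|^{2s-1}\T^{+}_{s}(A)$), dyadically pigeonhole $r_{sA}$ over its nonzero values; since the recursion is assumed to fail, the number of dyadic scales is $O(L)$, and Hölder gives a level set $P\subset\R\setminus\{0\}$ of height $\D$ with $|P|\D\le|A|^s$, $|P|\D^2\le\T^{+}_s(A)$, and $\T^{+}_{2s}(A)\le\tfrac85 L^4(2\D)^4\E^{+}(P)+\mathcal{E}$. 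The new ingredient is the product-set control: for $b\in B$, $p\in P$ one has $r_{s\cdot AB}(pb)\ge r_{sA}(p)>\D$, whence $|PB|\D\le|AB|^s\le M^s|A|^s$. Applying the real version of Lemma \ref{l:AA_small_energy} (only the second term survives) to $P$ and $B$ yields
$$
\E^{+}(P)\le C_*\frac{|PB|^{3/2}|P|^{3/2}}{|B|^{1/2}}\le C_*\frac{M^{3s/2}|A|^{3s/2}|P|^{3/2}}{\D^{3/2}|B|^{1/2}}\,.
$$
Substituting, using $|P|^{3/2}\D^{5/2}=(|P|\D^2)(|P|\D)^{1/2}\le\T^{+}_s(A)\,|A|^{s/2}$, and absorbing $\mathcal{E}$ (legitimate since $\sqrt{|B|}\le\sqrt{M|A|}\le C_*L^4M^{3s/2}|A|$), one recovers precisely the displayed recursion, a contradiction; hence the recursion holds for every $s\ge2$.

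\emph{Iteration.} I would apply the recursion with $s=2^{k-1},2^{k-2},\dots,2$, i.e.\ $k-1$ times, reducing $\T^{+}_{2^k}(A)$ to a multiple of $\T^{+}_2(A)=\E^{+}(A)$, and then invoke Lemma \ref{l:AA_small_energy} once more to get $\E^{+}(A)\le 2C_*M^{3/2}|A|^3/\sqrt{|B|}$. The exponents accumulate cleanly: the power of $M$ is $\tfrac32\sum_{i=1}^{k-1}2^i+\tfrac32=\tfrac32(2^k-1)$; the power of $|A|$ is $\sum_{i=1}^{k-1}2^{i+1}+3=2^{k+1}-1$; the power of $|B|$ is $-(k-1)/2-1/2=-k/2$; and the logarithms multiply to $\log^{4(k-1)}|A|$. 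Collecting the numerical constants — the product of the factors $32C_*(2^i)^4$ together with the final $2C_*$ — one bounds everything by $16^{k^2}C_*^{k-1}$ (absorbing $2^{4\sum i}=2^{2k(k-1)}$ and a stray power of $C_*$ into $16^{k^2}$), which is (\ref{f:T_k_G_MR}).

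\emph{Main obstacle.} There is no deep difficulty here: the real setting is strictly cleaner than $\F_p$, and the argument above is essentially the proof of Theorem \ref{t:T_k_G_M} with the $p$-terms deleted. The work is purely housekeeping: (i) if $|B|$ is so small that the right-hand side of (\ref{f:T_k_G_MR}) already exceeds the universal bound $\T^{+}_{2^k}(A)\le|A|^{2^{k+1}-1}$ there is nothing to prove, so one may restrict to the range of $|B|$ (equivalently of $M$) in which the number of dyadic scales is genuinely $O(s\log|A|)$ once the recursion fails; (ii) one must pass to $B\setminus\{0\}$ (and note $P\subset\R\setminus\{0\}$ by construction) so that Lemma \ref{l:AA_small_energy} is applicable, costing only absolute constants; (iii) checking that the accumulated constant fits under $16^{k^2}C_*^{k-1}$ requires a short numerical comparison. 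A structural remark worth recording: closing the iteration by applying Lemma \ref{l:AA_small_energy} to $\E^{+}(A)$ is exactly what turns the factor $|A|^{2^{k+1}-4}|B|^{-(k-1)/2}\E^{+}(A)$ of Theorem \ref{t:T_k_G_M} into the clean $|A|^{2^{k+1}-1}|B|^{-k/2}$ here.
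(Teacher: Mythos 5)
Your proposal is correct and is essentially the paper's own argument: the paper treats the real case only by remarking that the proof of Theorem \ref{t:T_k_G_M} goes through over $\R$ with the $p$-terms deleted, which is precisely the recursion $\T^{+}_{2s}(A)\ll C_* s^4\log^4|A|\, M^{3s/2}|A|^{2s}|B|^{-1/2}\T^{+}_s(A)$ iterated $k-1$ times and closed by one final application of (the real form of) Lemma \ref{l:AA_small_energy} to $\E^{+}(A)$, and your exponent bookkeeping ($M^{\frac32(2^k-1)}$, $|A|^{2^{k+1}-1}$, $|B|^{-k/2}$, $\log^{4(k-1)}|A|$) matches the stated bound. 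The only caveat, which the paper's own statement shares, is that the closing step contributes an extra factor of $C_*$ (giving $C_*^{k}$ rather than $C_*^{k-1}$), so the absorption into $16^{k^2}C_*^{k-1}$ uses the slack in the numerical constant exactly as you indicate.
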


\begin{corollary}
	Let $A \subset \R$  be a finite set, $M\ge 1$ be a real number and 
	$|AA| \le M|A|$ or $|A/A| \le M|A|$. 
	Then for any $k\ge 2$, one has
	\begin{equation}\label{f:M_cor}
	|2^k A| \gg_k |A|^{1+\frac{k}{2}} M^{-\frac{3}{2} (2^{k}-1)} \cdot \log^{-4(k-1)} |A| \,.
	\end{equation}
\end{corollary}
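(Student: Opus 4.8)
The plan is to deduce this from Theorem~\ref{t:T_k_G_MR} by combining it with a standard Cauchy--Schwarz bound that links the additive energy $\T^{+}_{2^k}(A)$ to the cardinality of the iterated sumset $2^k A$. The first step is to put both hypotheses on the same footing by producing a set $B$ with $|B| = |A|$ and $|AB| \le M|A|$. If $|AA| \le M|A|$, take $B := A$, so that $|AB| = |AA| \le M|A|$. If instead $|A/A| \le M|A|$, one may first assume $0 \notin A$ (deleting $0$ costs at most a factor $2$ in $|A|$ and does not affect the asymptotic claim), then set $B := \{a^{-1} : a \in A\}$; since inversion is a bijection of $\R^{*}$ we get $|B| = |A|$, while $AB = A/A$ forces $|AB| \le M|A|$. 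Either way we land in the hypothesis of Theorem~\ref{t:T_k_G_MR} with $|B| = |A|$.

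Applying Theorem~\ref{t:T_k_G_MR} with this $B$ and substituting $|B| = |A|$ into (\ref{f:T_k_G_MR}) gives the energy bound
\[
\T^{+}_{2^k}(A) \le 16^{k^2} C_*^{k-1} M^{\frac{3}{2}(2^k-1)} \log^{4(k-1)}|A| \cdot |A|^{2^{k+1}-1-k/2} \,.
\]

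For the lower bound on $|2^k A|$, write $r_{2^k A}(x)$ for the number of representations of $x$ as a sum of $2^k$ elements of $A$, so that $\sum_x r_{2^k A}(x) = |A|^{2^k}$ while $\sum_x r_{2^k A}(x)^2 = \T^{+}_{2^k}(A)$ by definition (\ref{def:T_k}). Then Cauchy--Schwarz yields
\[
|A|^{2^{k+1}} = \Big( \sum_{x} r_{2^k A}(x) \Big)^{2} \le |2^k A| \cdot \T^{+}_{2^k}(A) \,,
\]
hence $|2^k A| \ge |A|^{2^{k+1}}/\T^{+}_{2^k}(A)$. Feeding in the energy bound from the previous display,
\[
|2^k A| \ge \frac{|A|^{1+k/2}}{16^{k^2} C_*^{k-1} M^{\frac{3}{2}(2^k-1)} \log^{4(k-1)}|A|} \,,
\]
and absorbing the $k$-dependent factor $16^{k^2} C_*^{k-1}$ into the implied constant gives exactly (\ref{f:M_cor}).

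I do not anticipate a genuine obstacle here: all the real work is contained in Theorem~\ref{t:T_k_G_MR}, which is itself obtained by iterating the recursion (\ref{f:T_2s,T_s_M}) exactly as in the proof of Theorem~\ref{t:T_k_G_M} (over $\R$ the $p$-term is absent, which is why no restriction on $k$ is needed), and the corollary is a one-line consequence of Cauchy--Schwarz. The only points demanding a little care are the reduction of the quotient case to the product-type hypothesis via inversion (and the harmless removal of $0$ from $A$), and the fact that the factor $16^{k^2}C_*^{k-1}$ is only a constant for fixed $k$, so the conclusion must be phrased with $\gg_k$ rather than uniformly in $k$.
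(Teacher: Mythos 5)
Your proof is correct and is essentially the deduction the paper intends: the corollary is stated as an immediate consequence of Theorem~\ref{t:T_k_G_MR}, obtained exactly as you do by taking $B=A$ (or $B=A^{-1}$ in the quotient case) and applying Cauchy--Schwarz in the form $|A|^{2^{k+1}}\le |2^kA|\,\T^{+}_{2^k}(A)$, with the factor $16^{k^2}C_*^{k-1}$ absorbed into $\gg_k$. Your remarks on removing $0$ and on the $k$-dependence of the implied constant are appropriate and introduce no gap.
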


Bounds of such a sort were obtained in \cite{K_mult} by another method. 
The best results concerning lower bounds for multiple sumsets $kA$, $k \to \infty$ of sets $A$ with small product/quotient 	set can be found in \cite{BC}.

\bigskip

To obtain an analogue of Theorem \ref{t:Q_shift} for sets with $|AA|\ll |A|$ we cannot use the same arguments as 
in section \ref{sec:proof} 
because the spectrum is not an invariant set in this case. 
Moreover, in $\R$ there is an additional difficulty with using Fourier transform : the dual group of $\R$ does not coincide with $\R$ of course. 
That is why we suggest another  method which works in "physical space"\, but not in the dual group.

To formulate our main result about $\E^{+}_k (Q)$ for sets $Q$ with small product $Q\G$ for some relatively large set $\G$
we need some notation.  
Let us write $Q^{(k)} = |Q\G^{k-1}|$ for $k\ge 1$ and $Q^{(k)} = |Q|$ for $k=-1$.

\begin{theorem}
	Let $\G,Q \subseteq \F_p$ be two sets, and $k\ge 0$ be an integer.
	Suppose that $|Q\G^{k+1}| |Q\G^k| |\G| \le p^2$, 
	further 
	$Q^{(k)} |\G| \le p$, 
	and $M = |Q \G^{k+1}|/|Q|$. 
	Then 
	either 
	\begin{equation}\label{f:QG}
	\E^{+}_{2^{k+1}} (Q) 
	\le 
	(M^{2^k+1} 2^{3k+1} C^{(k+4)/4}_* \log^{k} Q^{(k)})  \cdot
	|Q|^{2^{k+1}+1} |\G|^{-k/8-1/2} 
	\end{equation}
	or 
	$$
	\E^{+}_{2^{k+1}} (Q)  \le 2 (Q^{(k)})^{2^{k+1}} \,.
	$$
	In particular, if we choose $k$  such that 
	$|\G|^{k/8+1/2} \ge |Q| \cdot  M^{2^k+1} 2^{3k+1} C^{(k+4)/4}_* \log^{k} Q^{(k)}$, then  
	\begin{equation}\label{f:QG'}
	\E^{+}_{2^{k+1}} (Q) \le 2 (Q^{(k)})^{2^{k+1}}  \,.
	\end{equation}
	\label{t:QG}
\end{theorem}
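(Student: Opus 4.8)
I want to argue by induction on $k$, using Lemma~\ref{l:AA_small_energy} as the workhorse at each stage. The base case $k=0$ is immediate: applying Lemma~\ref{l:AA_small_energy} to the pair $(Q,\G)$ (legitimate since $|Q\G|\le M|Q|$) gives $\E^{+}(Q)\le C_*(M^{2}|Q|^{4}p^{-1}+M^{3/2}|Q|^{3}|\G|^{-1/2})$; as $M\ge1$ the second term is $\le M^{2}C_*|Q|^{3}|\G|^{-1/2}$, and since $Q^{(0)}|\G|\le p$ forces $|Q||\G|^{1/2}\le p$ the first term is bounded by the same quantity, which is precisely~(\ref{f:QG}) for $k=0$.

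For the inductive step I would first dispose of two easy cases. If $|\G|^{k/8+1/2}\le M^{2^{k}+1}2^{3k+1}C_{*}^{(k+4)/4}\log^{k}Q^{(k)}$, then the right-hand side of~(\ref{f:QG}) is at least $|Q|^{2^{k+1}+1}$, and since trivially $\E^{+}_{m}(Q)\le|Q|^{m+1}$ the bound~(\ref{f:QG}) holds; so I may assume the reverse inequality, which in particular forces $|\G|$ to be large relative to $C_*$, $M$ and $\log Q^{(k)}$. Similarly, if $\E^{+}_{2^{k+1}}(Q)\le2|Q|^{2^{k+1}}$ then, since $Q^{(k)}=|Q\G^{k-1}|\ge|Q|$, the second alternative of the theorem holds; so I may also assume $\E^{+}_{2^{k+1}}(Q)>2|Q|^{2^{k+1}}$.

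In the remaining case I follow the scheme announced in the introduction and compare the higher moment $\E^{+}_{2^{k+1}}(Q)=\sum_{x}r_{Q-Q}(x)^{2^{k+1}}$ with the half-index one. Dyadic pigeonholing of $r_{Q-Q}$ in a high enough power (so that each level costs only an absolute constant), after discarding the negligible levels — the assumption $\E^{+}_{2^{k+1}}(Q)>2|Q|^{2^{k+1}}$ kills the diagonal term $x=0$, and a suitable cutoff leaves only $O(\log Q^{(k)})$ surviving levels — isolates a level set $P\subseteq\F_p^{*}$ on which $r_{Q-Q}\asymp\Delta$ and which carries a constant fraction of the moment. Corollary~\ref{c:change_QG}, applied to this $P$ with $A=Q$, $B=\G$, the exponent $2^{k-1}$ (so that the resulting energy has index $2^{k}$), and then in its fourth-root form, bounds the relevant partial sum $\sum_{x\in P}r_{Q-Q}(x)^{2^{k-1}}$ in terms of a higher energy $\E^{+}_{2^{k}}(Q\G)$ of the \emph{dilate} $Q\G$, picking up only a $C_*^{1/4}$, an $|\G|^{-1/8}$, a power of $|Q|$, and the factor $\bigl(|P|^{4}/p+|P|^{3}/|\G|^{1/2}\bigr)^{1/4}$. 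I then feed in the inductive hypothesis for the pair $(Q\G,\G)$ at level $k-1$ — whose hypotheses are inherited, since $|Q\G^{k+1}||Q\G^{k}||\G|\le p^{2}$, $(Q\G)^{(k-1)}=|Q\G^{k-1}|=Q^{(k)}$, and $M_{\mathrm{new}}:=|Q\G^{k+1}|/|Q\G|\le M$ — and combine the two using the elementary inequality $|Q\G|^{2^{k}+1}M_{\mathrm{new}}^{2^{k-1}+1}\le(M|Q|)^{2^{k}+1}$, reaching either the second alternative for $Q$ or~(\ref{f:QG}) for $Q$. Unrolled, the recursion walks from level $k$ for $Q$ down through level $0$ for $Q\G^{k}$ (where Lemma~\ref{l:AA_small_energy} applies directly), and the compounded fourth-root factors over these $\le k$ steps produce exactly the exponents $M^{2^{k}+1}$, $|Q|^{2^{k+1}+1}$, $C_{*}^{(k+4)/4}$, $\log^{k}Q^{(k)}$ and $|\G|^{-k/8-1/2}$ in~(\ref{f:QG}) (the last together with the $|\G|^{-1/2}$ from the base case). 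The ``in particular'' claim is then automatic: under the stated hypothesis on $k$ the right-hand side of~(\ref{f:QG}) is $\le|Q|^{2^{k+1}}\le2(Q^{(k)})^{2^{k+1}}$, so whichever branch of the dichotomy holds one gets $\E^{+}_{2^{k+1}}(Q)\le2(Q^{(k)})^{2^{k+1}}$.

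The hard part will be the bookkeeping: one has to keep the two error terms of the point--plane incidence bound (Theorem~\ref{t:Misha+}) under control — in particular to show that the ``$\F_p$-term'' $|P|^{4}/p$ never dominates $|P|^{3}/|\G|^{1/2}$, which is exactly where the hypotheses $|Q\G^{k+1}||Q\G^{k}||\G|\le p^{2}$ and $Q^{(k)}|\G|\le p$ are used — and one has to verify that the advertised factors $2^{3k+1}$, $C_{*}^{(k+4)/4}$ and the $k$ logarithmic factors really do absorb everything that the $\le k$ nested applications of Corollary~\ref{c:change_QG} and of the inductive hypothesis throw up.
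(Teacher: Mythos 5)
Your plan is, in all essentials, the proof the paper gives: dyadic pigeonholing on a high power of $r_{Q-Q}$ so that each level costs only an absolute constant, Corollary~\ref{c:change_QG} with $A=Q$, $B=\G$ to trade the restricted moment for $\E^{+}_{2l}(Q\G)$ at a price of $C_*^{1/4}\log$ and a gain of $|\G|^{-1/8}$, iteration of this step $k$ times along $Q, Q\G,\dots,Q\G^{k}$ (the paper phrases it as successive application of the one-step inequality (\ref{f:iteration_5.2}) rather than induction on $k$, which is the same thing), and Lemma~\ref{l:AA_small_energy} applied to $\E^{+}(Q\G^{k})$ at the bottom, where $|Q\G^{k+1}||Q\G^{k}||\G|\le p^{2}$ is exactly what kills the $p$-term of the incidence bound. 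Your reduction of the ``in particular'' statement and the inheritance of the hypotheses by the pair $(Q\G,\G)$ are also as in the paper.

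The one place where your sketch is thinner than it looks is the claim that, inside the iteration, the term $|P|^{4}/p$ is dominated ``exactly because of'' the two hypotheses involving $p$. After converting the powers of $\D$ and $|P|$ into energies one must verify an inequality of the shape $(\E^{+}_{l}(Q))^{2}|\G|^{1/2}\le\E^{+}_{2l}(Q)\,p$, and this does not follow from $Q^{(k)}|\G|\le p$ together with trivial bounds (that would require roughly $|Q|^{2}|\G|^{1/2}\le p$), nor from your top-level assumption $\E^{+}_{2^{k+1}}(Q)>2|Q|^{2^{k+1}}$. The paper needs an extra reduction here: one may assume $\E^{+}_{s}(Q)\ge|Q|^{s+1}|\G|^{-\frac{1}{8}\log s-\frac{1}{2}}$ for all intermediate indices $s$ (otherwise (\ref{f:QG}) already holds), and then a H\"older comparison of $(\E^{+}_{l}(Q))^{2}$ with $\E^{+}_{2l}(Q)$ reduces the needed inequality to $|Q|^{4l}|\G|^{(2l-1)/2}\le\E^{+}_{2l}(Q)\,p^{2l-1}$, which does follow from $|Q||\G|\le p$. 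Relatedly, you leave unspecified how the powers of $\D$ and $|P|$ are balanced against the fourth root: the paper does it by working with the fractional moment $\E^{+}_{5l/2}$ (on the level set of $r^{l}_{Q-Q}$ one has $r^{5l/2}\le(2\D)^{3/2}r^{l}$ and $\D^{6}|P|^{3}\le(\E^{+}_{2l}(Q))^{3}$, then $\E^{+}_{2l}(Q)\le\E^{+}_{2l}(Q\G)$ and $\E^{+}_{4l}(Q)\le|Q|^{3l/2}\E^{+}_{5l/2}(Q)$), whereas your version would have to absorb a power of $\E^{+}_{2^{k+1}}(Q)$ itself --- workable, but it changes the $\log$ and $C_*$ bookkeeping you promise to recover verbatim. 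So: right strategy and right ingredients, but the deferred ``bookkeeping'' hides one genuine extra idea (the intermediate else-we-win lower bounds plus H\"older), without which the $p$-term step fails as you stated it.
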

\begin{proof}
	Without loss  of generality one can assume that $0\notin \G$.  
	Fix an integer $l\ge 1$ and prove that either 
	\begin{equation}\label{f:iteration_5.2}
	\E^{+}_{5l/2} (Q) 
	\le 8 C^{1/4}_* \log^{} |Q| \cdot |Q|^{l/2} \E^{+}_{2l} (Q\G) |\G|^{-1/8} 
	\end{equation}
	or
	\begin{equation}\label{f:iteration_5.2'}
	\E^{+}_{5l/2} (Q) \le 2|Q|^{5l/2} \,.
	\end{equation}
	Put $g(x)  = r^l_{Q-Q} (x)$, $L = \log |Q|$
	and $\E'_{5l/2} (Q) = \E^{+}_{5l/2} (Q) - |Q|^{5l/2} \ge 0$. 
	We will assume below that $\E'_{5l/2} (Q) \ge 2^{-1} \E^{+}_{5l/2} (Q)$ because otherwise 
	we obtain (\ref{f:iteration_5.2'}) immediately. 
	Using the Dirichlet principle, we find a set $P$ and a positive number $\D$ such that $P = \{ x ~:~ \D < g(x) \le 2\D \} \subseteq \F_p^*$ and 
	$$
	\E'_{5l/2} (Q) \le L \sum_{x \in P} r^{5l/2}_{Q-Q} (x) \,. 
	$$  
	Applying Corollary \ref{c:change_QG}, we obtain	
	\begin{equation*}\label{tmp:30.04_1}
	\E'_{5l/2} (Q) \le L (2\D)^{3/2} \sum_{x \in P} r^{l}_{Q-Q} (x)
	\le
	3 C^{1/4}_* L \D^{3/2} |Q|^{l/2} (\E^{+}_{2l} (Q\G))^{1/4} \left( \frac{|P|^4}{p} + \frac{|P|^3}{|\G|^{1/2}}\right)^{1/4} 
	\le 
	\end{equation*}
	$$
	\le
	3 C^{1/4}_* L  |Q|^{l/2} (\E^{+}_{2l} (Q\G))^{1/4} \left( \frac{\D^6 |P|^4}{p} + \frac{\D^6 |P|^3}{|\G|^{1/2}}\right)^{1/4} \,.
	$$
	We have $\D |P| \le \E^{+}_{l} (Q)$, $\D^2 |P| \le \E^{+}_{2l} (Q)$ and hence $\D^6 |P|^4 \le (\E^{+}_{2l} (Q))^2 (\E^{+}_{l} (Q))^2$.
	It follows that
	$$
	\E'_{5l/2} (Q) 
	\le
	3 C^{1/4}_* L  |Q|^{l/2} (\E^{+}_{2l} (Q\G))^{1/4}  
	\left( \frac{(\E^{+}_{2l} (Q))^2 (\E^{+}_{l} (Q))^2}{p} + \frac{(\E^{+}_{2l} (Q))^3}{|\G|^{1/2}}\right)^{1/4} \,.
	$$
	To prove that the first term $\frac{(\E^{+}_{2l} (Q))^2 (\E^{+}_{l} (Q))^2}{p}$ is less than $\frac{(\E^{+}_{2l} (Q))^{3}}{|\G|^{1/2}}$, we need to check that 
	$$
	(\E^{+}_{l} (Q))^2 |\G|^{1/2} \le \E^{+}_{2l} (Q) p \,.
	$$
	But using the H\"older inequality, 
	we see that the required estimate follows from 
	$$
	(\E^{+}_{l} (Q))^2 |\G|^{\frac{1}{2}} \le (\E^{+}_{2l} (Q))^{\frac{2(l-1)}{2l-1}}
	|Q|^{\frac{4l}{2l-1}} |\G|^{\frac{1}{2}}  \le \E^{+}_{2l} (Q) p 
	$$
	or, in other words, from
	\begin{equation}\label{03.05.2017_2}
	|Q|^{4l} |\G|^{(2l-1)/2} \le \E^{+}_{2l} (Q) p^{2l-1} \,.
	\end{equation}
	Finally, we can suppose that for any $s\ge 2$ one has, say,  
	$$
	\E^{+}_s (Q) \ge |Q|^{s+1} |\G|^{-\frac{1}{8} \log s - \frac{1}{2}} 
	$$
	because otherwise estimate (\ref{f:QG}) follows easily. 
	In view of our assumption  $|Q| |\G| \le p$, we obtain
	$$
	|Q|^{2l-1}|\G|^{\frac{1}{8} \log 2l + l} \le p^{2l-1} |\G|^{1 + \frac{1}{8} \log 2l - l} \le p^{2l-1} 
	$$
	and hence (\ref{03.05.2017_2}) takes place for $l\ge 2$. 
	For $l=1$ see calculations below.
	Hence under this assumption and the inequality  $\E'_{5l/2} (Q) \ge 2^{-1} \E^{+}_{5l/2} (Q)$, we have 
	$$
	\E^{+}_{5l/2} (Q) \le 8 C^{1/4}_* \log^{} |Q| \cdot |Q|^{l/2} \E^{+}_{2l} (Q\G) |\G|^{-1/8} 
	$$
	and we have proved (\ref{f:iteration_5.2}). 
	Trivially, it implies that
	$$
	\E^{+}_{4l} (Q) \le 8 C^{1/4}_* \log^{} |Q| \cdot |Q|^{2l} \E^{+}_{2l} (Q\G) |\G|^{-1/8} 
	$$
	and subsequently using  this bound, we obtain
	$$
	\E^{+}_{2^{k+1}} (Q) \le (2^{3k} C^{k/4}_* \log^{k}  |Q\G^{k-1}|)  \cdot M^{2^{k-1} + \dots + 2} |Q|^{2^{k}+\dots+2} \E^{+} (Q\G^k) |\G|^{-k/8} 
	=
	$$
	$$
	=
	(2^{3k} M^{2^k-2} C^{k/4}_* \log^{k}  |Q\G^{k-1}|)  
	\cdot |Q|^{2^{k+1}-2} \E^{+} (Q\G^k) |\G|^{-k/8} \,.
	$$
	Now 
	recalling 
	the assumption $|Q\G^{k+1}| |Q\G^k| |\G| \le p^2$ and applying Lemma \ref{l:AA_small_energy}, we get
	$$
	\E^{+}_{2^{k+1}} (Q) \le (M^{2^k+1} 2^{3k+1} C^{(k+4)/4}_* \log^{k}  |Q\G^{k-1}|)  \cdot
	|Q|^{2^{k+1}+1} |\G|^{-k/8-1/2} \,. 
	$$
	In particular, this final step covers the remaining case $l=1$ above. 
	This completes the proof. 
	$\hfill\Box$
\end{proof}


\begin{remark}
	Let $\G$ be a multiplicative subgroup and $Q\G=Q$. 
	Then by Theorem \ref{t:QG} if $|Q| |\G| \le p$ and a number $k_1$ is chosen as  $|\G|^{k_1/8+1/2} \ge |Q| \log^{k_1} |Q|$, then $\E^{+}_{2^{k_1+1}} (Q) \ll_{k_1} 	|Q|^{2^{k_1+1}}$. 
	Let us compare this with Theorem \ref{t:Q_shift}.
	By this result, choosing $k_2$ such that $|\G|^{\frac{k_2+2}{2}} \ge |Q| \log^{4k_2} |Q|$, we get 
	$\E^{+}_{2^{k_2+1}} (Q) \ll_{k_2} |Q|^{2^{k_2+1}} |\G|^{\frac{1}{2}}$. 
	After that applying the second part of Corollary \ref{c:Q_cap_Q+x} other $n:=2^{k_2+1}$ times, we obtain 
	$$
	\E^{+}_{2^{2k_2+2}} (Q) \ll_{k_2} |Q|^{2^{2k_2+2}} + \E^{+}_{2^{k_2+1}} (Q) (|Q||\G|^{-\frac{1}{4} \cdot 2^{-k_2}})^n
	\ll_{k_2}
	|Q|^{2^{2k_2+2}} + |Q|^{2^{k_2+1}} |\G|^{\frac{1}{2}} |Q|^n |\G|^{-\frac{1}{2}}
	\ll
	|Q|^{2^{2k_2+2}} \,.
	$$
	Thus, Theorem \ref{t:Q_shift} gives slightly  better  bound (in the case of multiplicative subgroups) but of the same form. 
\end{remark}


\begin{remark}
	From formula (\ref{tmp:12.04_1--}), it follows that for any $l$ one has  $\E^{+}_{l} (Q) \ge \frac{|Q|^{2l}}{p^{l-1}}$.
	Hence upper bound (\ref{f:QG'}) 
	can has place 
	just for small sets  $Q$. 
	For example, taking the smallest possible  $l=2$ and comparing $|Q|^2$ with $|Q|^4/p$ we see that the condition $|Q|<\sqrt{p}$ is enough. 
	If $Q = Q\G$, where $\G$ is a multiplicative subgroup, then it is possible to refine this condition because in the proof of Theorem \ref{t:Q_shift}
	another method (the Fourier approach) was used.
	We did not make such calculations. 
	\label{r:E_sqrt}
\end{remark}


Now we can obtain analogues of Corollaries \ref{c:Q_cap_Q+x} and  \ref{c:E(Q,A)}.

\begin{corollary}
	Let $\G, Q_1,Q_2 \subseteq \F_p$ be sets. 
	Take $k\ge 0$ such that for $j=1,2$ 
	one has\\ $|Q_j \G^{k+2}| |Q_j\G^{k+1}| |\G| \le p^2$, $|Q_j \G^{k}| |\G| \le p$,
	$|Q_j \G| \le M_* |Q_j|$, $|Q_j \G^{k+2}| \le M |Q_j|$, 
	and 
	\begin{equation}\label{cond:Q_cap_Q+x_M}
	|\G|^{k/8+1/2} \ge |Q_j| \cdot  M_* M^{2^k+1} 2^{3k+1} C^{(k+4)/4}_* \log^{k} |Q_j \G^k| \,.
	\end{equation}
	Then for any $x\neq 0$ the following holds
	\begin{equation}\label{f:Q_cap_Q+x_M}	
	|Q_1\cap (Q_2+x)| \le  2M_* M \sqrt{|Q_1||Q_2|} \cdot |\G|^{-\frac{1}{2} 2^{-k}} \,.
	\end{equation}
	\label{c:Q_cap_Q+x_M}
\end{corollary}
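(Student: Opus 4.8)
The plan is to follow the proof of Corollary~\ref{c:Q_cap_Q+x}; the new feature is that $Q_1,Q_2$ need not be $\G$--invariant, so I first replace them by the sets $Q_1\G$ and $Q_2\G$, which are closed under multiplication by $\G$, and then apply Theorem~\ref{t:QG} to these. We may assume $0\notin\G$. The elementary input is a dilation/averaging step: for every $\gamma\in\G$ the map $a\mapsto\gamma a$ is an injection of $\F_p$ that carries $Q_1\cap(Q_2+x)$ into $(Q_1\G)\cap(Q_2\G+\gamma x)$ (since $\gamma a\in\G Q_1=Q_1\G$ and $\gamma a-\gamma x=\gamma(a-x)\in\G Q_2=Q_2\G$), whence
$$
r_{Q_1\G-Q_2\G}(\gamma x)=|(Q_1\G)\cap(Q_2\G+\gamma x)|\ge|Q_1\cap(Q_2+x)| \qquad (\gamma\in\G)\,.
$$
Since $x\neq0$, the points $\gamma x$ $(\gamma\in\G)$ are pairwise distinct, so raising to the power $2^{k+1}$ and summing over $\gamma\in\G$ gives
$$
|\G|\,|Q_1\cap(Q_2+x)|^{2^{k+1}}\le\sum_{y}r_{Q_1\G-Q_2\G}^{2^{k+1}}(y)\,.
$$
By the Cauchy--Schwarz inequality, exactly as in the proof of Corollary~\ref{c:Q_cap_Q+x} and using that $2^{k+1}$ is even, the right-hand side is at most $\big(\E^{+}_{2^{k+1}}(Q_1\G)\,\E^{+}_{2^{k+1}}(Q_2\G)\big)^{1/2}$.

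Next I would estimate each factor $\E^{+}_{2^{k+1}}(Q_j\G)$ by Theorem~\ref{t:QG}, applied with its ``$Q$'' taken to be $Q_j\G$ and its multiplier set taken to be $\G$. After this substitution the size hypotheses of that theorem read $|Q_j\G^{k+2}|\,|Q_j\G^{k+1}|\,|\G|\le p^2$ and $|Q_j\G^{k}|\,|\G|\le p$, which are precisely the assumptions of the corollary, while the parameter $M$ of the theorem becomes $|Q_j\G^{k+2}|/|Q_j\G|$, which is at most the corollary's $M$ because $|Q_j\G|\ge|Q_j|$. Using in addition $|Q_j\G|\le M_*|Q_j|$, one checks that the ``in particular'' hypothesis $|\G|^{k/8+1/2}\ge|Q_j\G|\cdot\big(|Q_j\G^{k+2}|/|Q_j\G|\big)^{2^k+1}2^{3k+1}C^{(k+4)/4}_*\log^{k}|Q_j\G^{k}|$ of Theorem~\ref{t:QG} follows from~(\ref{cond:Q_cap_Q+x_M}), the factor $M_*$ there serving as a safety margin. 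Thus Theorem~\ref{t:QG} gives $\E^{+}_{2^{k+1}}(Q_j\G)\le 2|Q_j\G^{k}|^{2^{k+1}}$ for $j=1,2$.

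Plugging this in and using $|Q_j\G^{k}|\le|Q_j\G^{k+2}|\le M|Q_j|$ (the first inequality since $0\notin\G$ implies $|X\G|\ge|X|$), I obtain
$$
|\G|\,|Q_1\cap(Q_2+x)|^{2^{k+1}}\le 2\big(|Q_1\G^{k}|\,|Q_2\G^{k}|\big)^{2^{k}}\le 2M^{2^{k+1}}\big(|Q_1|\,|Q_2|\big)^{2^{k}}\,,
$$
and extracting the $2^{k+1}$--st root yields $|Q_1\cap(Q_2+x)|\le 2^{2^{-k-1}}M\sqrt{|Q_1||Q_2|}\,|\G|^{-\frac{1}{2}2^{-k}}$, which is~(\ref{f:Q_cap_Q+x_M}) with room to spare, since $2^{2^{-k-1}}\le2$ and $M\le M_*M$. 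I expect the main obstacle to be purely bookkeeping: correctly identifying which Pl\"unnecke-type ratio plays the role of $M$ in Theorem~\ref{t:QG} after the substitution $Q\mapsto Q_j\G$, and checking that~(\ref{cond:Q_cap_Q+x_M}) is strong enough; everything else is the $\G$--dilation averaging already used for $\G$--invariant sets together with a direct invocation of Theorem~\ref{t:QG}.
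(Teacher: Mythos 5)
Your proposal is correct and follows essentially the same route as the paper: lower-bounding $\sum_y r^{2^{k+1}}_{Q_1\G-Q_2\G}(y)$ by $|\G|\,|Q_1\cap(Q_2+x)|^{2^{k+1}}$ via the $\G$-dilates of $x$, applying Cauchy--Schwarz, and then invoking the second part of Theorem~\ref{t:QG} with $Q=Q_j\G$, whose hypotheses you correctly match to the corollary's conditions (with $M_*$ absorbing $|Q_j\G|/|Q_j|$). Your bookkeeping via the clean bound $\E^{+}_{2^{k+1}}(Q_j\G)\le 2|Q_j\G^{k}|^{2^{k+1}}$ even yields slightly better constants than the paper's, so no gap remains.
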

\begin{proof}
	Denote by $\rho$ the quantity $|Q_1\cap (Q_2+x)|$. 
	On the one hand, applying 
	the Cauchy--Schwarz inequality and 
	the second part of Theorem \ref{t:QG},  
	we obtain 
	$$
	\sum_{y} r^{2^{k+1}}_{\G Q_1 - \G Q_2} (y) 
	\le (\E^{+}_{2^{k+1}} (\G Q_1))^{1/2} (\E^{+}_{2^{k+1}} (\G Q_2))^{1/2} 
	\le
	$$
	$$
	\le
	2^{3k+2} M^{2^k+1} (|Q_1 \G| |Q_2 \G|)^{2^{k}} 
	\le
	2^{3k+2} M^{2^k+1} M^{2^{k+1}}_* (|Q_1| |Q_2|)^{2^{k}} \,.
	$$ 
	On the other hand,  it is easy to see that for any $y\in \G x$ one has
	$r_{\G Q_1 - \G Q_2} (y) \ge \rho$.
	Thus, 
	$$
	\rho^{2^{k+1}} |\G| \le 2^{3k+2} M^{2^k+1} M^{2^{k+1}}_* (|Q_1| |Q_2|)^{2^{k}} \,.
	$$ 
	and hence
	$$
	\rho \le  2M_* M \sqrt{|Q_1||Q_2|} \cdot |\G|^{-\frac{1}{2} 2^{-k}} \,.
	$$
	Here we have used inequality $k\ge 5$ which easily follows from $|\G| \le |Q_j \G|\le M|Q_j|$ and condition (\ref{cond:Q_cap_Q+x_M}). 
	This completes the proof. 
	$\hfill\Box$
\end{proof}

\bigskip

In the next two corollaries 
we show how 
to replace the condition  $|Q \G^k| \ll |Q|$ to a condition with a single multiplication, namely, $|Q\G| \ll |Q|$.

\begin{corollary}
	Let $\G, Q$ be subsets of  $\F_p$, $M\ge 1$ be a real number, $|Q \G| \le M|Q|$.
	Suppose that for $k\ge 1$ one has  
	$(2M)^{k+1} |Q| |\G| \le p$, and
	\begin{equation}\label{c:cond_E(Q,A)_M}
	|\G|^{k/8+1/2} \ge |Q| \cdot  (2M)^{(k+3)2^k} C^{(k+4)/4}_* \log^{k}  ((2M)^k |Q|) \,.
	\end{equation}
	Then for any $A\subseteq \F_p$ the following holds
	\begin{equation}\label{f:E(Q,A)_M}	
	|A+Q| \ge 2^{-3} |Q| \cdot \min \{ |A|, 2^{-(4+k)} M^{-(k+3)} |\G|^{\frac{1}{2} 2^{-k}}  \} \,,
	\end{equation}
	and for any $\a \neq 0$ 
	\begin{equation}\label{f:E(Q,A)'_M}	
	|A(Q+\a)| \ge 2^{-3}  |Q| \cdot \min \{ |A|, 2^{-(4+k)} M^{-(k+3)} |\G|^{\frac{1}{2} 2^{-k}} \} \,.
	\end{equation}	
	\label{c:E(Q,A)_M}
\end{corollary}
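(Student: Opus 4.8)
The plan is to reduce the statement to Corollary~\ref{c:Q_cap_Q+x_M} by a Pl\"unnecke--Ruzsa argument, and then to run the Cauchy--Schwarz energy computation from the proof of Corollary~\ref{c:E(Q,A)}. As usual one may assume $0\notin\G$ (and $0\notin Q$, discarding it if necessary), so that all multiplications take place in the group $\F_p^*$; note also that (\ref{c:cond_E(Q,A)_M}), combined with $|\G|\le|Q\G|\le M|Q|$, can only be satisfied when $k\ge 5$ and then forces $|\G|\ge 2M$, so for smaller $k$ there is nothing to prove. First I would apply Lemma~\ref{l:Plunnecke_R} in $\F_p^*$ with $\d=1/2$ and $B_1=\dots=B_{k+2}=\G$ to produce a set $Q'\subseteq Q$ with $|Q'|\ge|Q|/2$ and $|Q'\G^{j}|\le(2M)^{j}|Q'|$ for all $1\le j\le k+2$; in particular $|Q'\G^{k+2}|\le(2M)^{k+2}|Q'|$ and $|Q'\G|\le|Q\G|\le M|Q|\le 2M|Q'|$.

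Next I would check that, for every $x\neq 0$, the sets $Q'$ and $xQ'$ satisfy the hypotheses of Corollary~\ref{c:Q_cap_Q+x_M} with the role of $M_*$ played by $2M$ and the role of $M$ played by $(2M)^{k+2}$; this uses $|xQ'\G^{j}|=|Q'\G^{j}|$. The size conditions $|Q_j\G^{k+2}||Q_j\G^{k+1}||\G|\le p^2$ and $|Q_j\G^{k}||\G|\le p$ follow from $(2M)^{k+1}|Q||\G|\le p$ together with $|\G|\ge 2M$, while condition (\ref{cond:Q_cap_Q+x_M}) for these sets follows from (\ref{c:cond_E(Q,A)_M}) since $(2M)^{1+(k+2)(2^k+1)}\cdot 2^{3k+1}\le(2M)^{(k+3)2^k}$ once $k\ge 5$. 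Consequently Corollary~\ref{c:Q_cap_Q+x_M} gives, for every $x\neq 0$,
\begin{equation*}
r_{Q'-Q'}(x)=|Q'\cap(Q'+x)|\le 2^{k+4}M^{k+3}\,|\G|^{-\frac12 2^{-k}}\,|Q'| =: \kappa|Q'| \,,
\end{equation*}
and, using the identity $r_{(Q'+\a)/(Q'+\a)}(x)=|Q'\cap(xQ'+\a(x-1))|$, valid for $x\neq 0,1$ with $\a(x-1)\neq 0$, the same bound yields $r_{(Q'+\a)/(Q'+\a)}(x)\le\kappa|Q'|$.

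Exactly as in the proof of Corollary~\ref{c:E(Q,A)}, summing these against $r_{A-A}$ and $r_{A/A}$ respectively gives
\begin{equation*}
\E^{+}(A,Q')\le |A||Q'|+\kappa|Q'|\sum_{x\neq 0}r_{A-A}(x)\le |A||Q'|+\kappa|Q'||A|^{2}\,,
\end{equation*}
and likewise $\E^{\times}(A,Q'+\a)\le|A||Q'|+\kappa|Q'||A|^{2}$. Then I would invoke Cauchy--Schwarz, $|A+Q|\ge|A+Q'|\ge(|A||Q'|)^{2}/\E^{+}(A,Q')$ and $|A(Q+\a)|\ge|A(Q'+\a)|\ge(|A||Q'|)^{2}/\E^{\times}(A,Q'+\a)$. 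If $|A|\le\kappa^{-1}=2^{-(4+k)}M^{-(k+3)}|\G|^{\frac12 2^{-k}}$, the term $|A||Q'|$ dominates, the energy is at most $2|A||Q'|$, and so $|A+Q|\ge\tfrac12|A||Q'|\ge\tfrac14|A||Q|$; otherwise the energy is at most $2\kappa|Q'||A|^{2}$, and so $|A+Q|\ge|Q'|/(2\kappa)\ge 2^{-(k+6)}M^{-(k+3)}|\G|^{\frac12 2^{-k}}|Q|$. In either case $|A+Q|\ge 2^{-3}|Q|\min\{|A|,\,2^{-(4+k)}M^{-(k+3)}|\G|^{\frac12 2^{-k}}\}$, which is (\ref{f:E(Q,A)_M}), and the identical computation with $\E^{\times}(A,Q'+\a)$ gives (\ref{f:E(Q,A)'_M}).

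The only genuine work is the second paragraph: keeping track of how the single-multiplication hypothesis $|Q\G|\le M|Q|$, once inflated by Pl\"unnecke--Ruzsa to $|Q'\G^{k+2}|\le(2M)^{k+2}|Q'|$ on a large subset $Q'$, still fits inside the quantitatively explicit hypotheses of Corollary~\ref{c:Q_cap_Q+x_M} that are recorded in (\ref{c:cond_E(Q,A)_M})—in particular verifying that the stated power $(2M)^{(k+3)2^k}$ absorbs both the Pl\"unnecke loss and the constant $2^{3k+1}$ from Corollary~\ref{c:Q_cap_Q+x_M}, which it does precisely because the hypotheses already force $k\ge 5$. Everything else is Cauchy--Schwarz together with the representation-function identity for $r_{(Q'+\a)/(Q'+\a)}$.
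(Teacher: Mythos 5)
Your proposal is correct and follows essentially the same route as the paper: Pl\"unnecke--Ruzsa (Lemma \ref{l:Plunnecke_R}) to pass to a subset $X\subseteq Q$ of density $1/2$ with $|X\G^l|\le (2M)^l|X|$, then Corollary \ref{c:Q_cap_Q+x_M} applied with $M_*=2M$ and $(2M)^{k+2}$ in place of $M$, and finally the energy/Cauchy--Schwarz argument of Corollary \ref{c:E(Q,A)} (including the identity $r_{(X+\a)/(X+\a)}(x)=|X\cap(xX+\a(x-1))|$). Your extra bookkeeping --- that the hypotheses force $k\ge 5$ and $|\G|\ge 2M$, so that $(2M)^{1+(k+2)(2^k+1)}2^{3k+1}\le(2M)^{(k+3)2^k}$ and $(2M)^{2k+3}|Q|^2|\G|\le p^2$ --- is exactly the verification the paper leaves implicit.
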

\begin{proof}
	Using Lemma \ref{l:Plunnecke_R}, find a set $X \subseteq Q$, $|X| \ge |Q|/2$ such that for any $l$ the following holds $|X\G^l| \le (2M)^l |X|$. 
	Also, notice that  $|X\G| \le |Q\G| \le 2M|X|$.
	We 
	apply Corollary \ref{c:Q_cap_Q+x_M} with $M=(2M)^{k+2}$, $M_* = 2M$ 
	and 
	see that for any $x\neq 0$ the following holds 
	$$
	|Q_1\cap (Q_2+x)| \le  2^{k+4} M^{k+3} \sqrt{|Q_1||Q_2|} \cdot |\G|^{-\frac{1}{2} 2^{-k}} 
	\,.
	$$
	Here $Q_1 =X$ and $Q_2=X$ or $Q_2 = \a X$. 
	Using the arguments from Corollary \ref{c:E(Q,A)}, we estimate the energies 
	$\E^{+} (A,X)$, $\E^{\times} (A,X+\a)$.
	In particular, we obtain lower bounds for the sumset from (\ref{f:E(Q,A)_M}) and the product set from (\ref{f:E(Q,A)'_M}). 
	It remains to check condition $(2M)^{2k+3} |Q|^2 |\G| \le p^2$. 
	But it follows from $(2M)^{k+1} |Q| |\G| \le p$ if $M\le |\G|/2$. 
	The last inequality 
	is a simple consequence of 
	(\ref{c:cond_E(Q,A)_M}). 
	This completes the proof. 
	$\hfill\Box$
\end{proof}



\bigskip

Now we prove an analogue of Corollary \ref{c:Q_cap_Q+x_M} where we require just $|Q_j \G|$, $j=1,2$ are small comparable to $|Q_j|$.   
For simplicity we formulate the next corollary in the situation $|Q'|=|Q|$ but of course general bound takes place as well.

\begin{corollary}
	Let $\G, Q, Q'$ be subsets of  $\F_p$, $|Q'|= |Q|$, $M\ge 1$ be a real number, $|Q\G|, |Q'\G| \le M|Q|$.
	Suppose that for $k\ge 1$ one has 
	$(2M)^{k+1} |Q| |\G| \le p$, and
	$$
	|\G|^{\frac{k}{8}+\frac{1}{2(k+4)}  } 
	\ge
	|Q| \cdot M^{(k+3)2^{k}} C^{(k+4)/4}_* \log^{k}  (|\G|^{\frac{k}{2(k+4)} 2^{-k}} |Q|) 
	$$
	Then for any $x\neq 0$ one has 
	\begin{equation}\label{f:QM_shift_more}	
	|Q\cap (Q'+x)| \le 	 4M|Q| \cdot |\G|^{-\frac{1}{2(k+4)} 2^{-k}} \,.
	\end{equation}	
	\label{c:QM_shift_more}
\end{corollary}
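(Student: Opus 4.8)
The plan is to deduce the estimate from Corollary \ref{c:Q_cap_Q+x_M} by passing to suitable subsets of $Q$ and $Q'$ via the Pl\"unnecke--Ruzsa inequality, but keeping the Pl\"unnecke parameter $\d\in(0,1/2]$ free and optimising it only at the very end. It is precisely this trade-off that should turn the exponent $\frac{1}{2}2^{-k}$ of Corollary \ref{c:Q_cap_Q+x_M} into the weaker $\frac{1}{2(k+4)}2^{-k}$ here while preserving the clean multiplicative constant $4M$ (whereas Corollary \ref{c:E(Q,A)_M} made the opposite choice, taking $\d=1/2$ and paying a $(2M)^{k+2}$ loss). One may assume $0\notin\G$ and, after replacing $\G$ by $\G\cup\{1\}$ at the cost of a factor $2$ in $M$, that $X\G^l\subseteq X\G^{k+2}$ for all $l\le k+2$.

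First I would apply Lemma \ref{l:Plunnecke_R}, in the form (\ref{f:Plunnecke_XR}), with $B_1=\dots=B_{k+2}=\G$ and dilation ratios at most $2M$, to get $X\subseteq Q$ and $X'\subseteq Q'$ with $|X|,|X'|\ge(1-\d)|Q|$ and $|X\G^l|,|X'\G^l|\le\d^{-l}(2M)^l|Q|$ for all $0\le l\le k+2$. The key elementary observation --- the point that makes restricting to subsets harmless even though we are upper-bounding an intersection (unlike in Corollary \ref{c:E(Q,A)_M}, where a sumset was bounded from below) --- is that for every $x$,
$$|Q\cap(Q'+x)|\le|X\cap(X'+x)|+|Q\setminus X|+|Q'\setminus X'|\le|X\cap(X'+x)|+2\d|Q|\,,$$
since any $q\in Q\cap(Q'+x)$ either lies in $X$ with $q-x\in X'$, or lies in $Q\setminus X$, or has $q-x\in Q'\setminus X'$.

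Next I would feed $X,X'$ into Corollary \ref{c:Q_cap_Q+x_M} with the same $k$, dilation bound $M_*=2M$ (legitimate since $|X\G|\le|Q\G|\le M|Q|\le2M|X|$ as $\d\le1/2$), and its parameter ``$M$'' taken to be $\d^{-(k+2)}(2M)^{k+2}$. Its two auxiliary hypotheses $|Q_j\G^{k+2}||Q_j\G^{k+1}||\G|\le p^2$ and $|Q_j\G^k||\G|\le p$ should follow from $(2M)^{k+1}|Q||\G|\le p$ once one knows $\d^{-1}\le|\G|^{\frac{1}{2(k+4)}2^{-k}}$ (true for the $\d$ chosen below), and its main condition (\ref{cond:Q_cap_Q+x_M}) should reduce --- up to lower-order logarithmic and $2^{O(k)}$ factors, and after substituting $|Q_j\G^k|\le|\G|^{\frac{k}{2(k+4)}2^{-k}}|Q|$ --- exactly to the displayed hypothesis of the present corollary; along the way one reads off $k\ge5$ from $|\G|\le M|Q|$, as in Corollary \ref{c:Q_cap_Q+x_M}. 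This would give, for every $x\neq0$,
$$|X\cap(X'+x)|\le2M_*\big(\d^{-(k+2)}(2M)^{k+2}\big)\sqrt{|X||X'|}\,|\G|^{-\frac{1}{2}2^{-k}}\ll\d^{-(k+3)}M^{k+3}|Q|\,|\G|^{-\frac{1}{2}2^{-k}}\,.$$

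Finally I would combine the last two displays and optimise in $\d$: balancing $\d^{-(k+3)}M^{k+3}|\G|^{-\frac{1}{2}2^{-k}}$ against $\d$ yields $\d\asymp M^{\frac{k+3}{k+4}}|\G|^{-\frac{1}{2(k+4)}2^{-k}}\le M|\G|^{-\frac{1}{2(k+4)}2^{-k}}$, and substituting back,
$$|Q\cap(Q'+x)|\ll\d|Q|\le4M|Q|\,|\G|^{-\frac{1}{2(k+4)}2^{-k}}\,,$$
which is the claim (a careful choice of the implied constants giving exactly $4M$). I expect the main obstacle to be purely the bookkeeping flagged above: checking that, with this choice of $\d$, \emph{all} hypotheses of Corollary \ref{c:Q_cap_Q+x_M} --- the two $p^2$-type conditions and the precise logarithmic and $C_*^{(k+4)/4}$ factors of (\ref{cond:Q_cap_Q+x_M}) included --- are implied by the single hypothesis stated here; the exponent $\tfrac{1}{2(k+4)}$ on $|\G|$ in that hypothesis is exactly the slack needed to absorb the factor $\d^{-(k+2)(2^k+1)}$ that the Pl\"unnecke step injects into the $M^{2^k+1}$ term of (\ref{cond:Q_cap_Q+x_M}).
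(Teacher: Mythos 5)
Your route is genuinely different from the paper's, and in outline it works. The paper does not touch $Q,Q'$ at all: it sets $\t{Q}=Q\cap(Q'+x)$ and applies the second part of Corollary \ref{c:E(Q,A)_M} to $\t{Q}$ itself, with $A=\G$, $\a=x$ and $M=\t{M}:=M|Q|/|\t{Q}|$, using that $|\t{Q}\G|\le|Q\G|\le M|Q|=\t{M}|\t{Q}|$ and $|(\t{Q}-x)\G|\le|Q'\G|\le M|Q|$. The lower bound $|(\t Q-x)\G|\gtrsim 2^{-(7+k)}\t{M}^{-(k+3)}|\t Q|\,|\G|^{\frac12 2^{-k}}$ against the upper bound $M|Q|$ yields $|\t Q|^{k+4}\lesssim M^{k+4}|Q|^{k+4}|\G|^{-\frac12 2^{-k}}$, so the exponent $\frac{1}{2(k+4)}2^{-k}$ comes out of the power $|\t Q|^{k+4}$ rather than from optimising a free parameter; moreover the hypothesis of Corollary \ref{c:E(Q,A)_M} is verified by a dichotomy (if the conclusion fails then $\t{M}\le|\G|^{\frac{1}{2(k+4)}2^{-k}}/4$, and then $(2\t M)^{(k+3)2^k}\le 2^{-(k+3)2^k}|\G|^{\frac{k+3}{2(k+4)}}$), so all the $2^{O(k2^k)}$ factors enter with \emph{negative} exponents and the stated hypothesis suffices verbatim. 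Your version instead re-runs Pl\"unnecke with a free $\d$, uses the correct and nice triangle-type bound $|Q\cap(Q'+x)|\le|X\cap(X'+x)|+2\d|Q|$, feeds the refined sets into Corollary \ref{c:Q_cap_Q+x_M}, and balances; the $|\G|$-, $M$- and $|Q|$-exponent bookkeeping does close (with the clean choice $\d=2M|\G|^{-\frac{1}{2(k+4)}2^{-k}}$ one gets $M_{\rm new}=(2M/\d)^{k+2}=|\G|^{\frac{(k+2)2^{-k}}{2(k+4)}}$ and the needed exponent gap is $\frac{1-(k+2)2^{-k}}{2(k+4)}>0$), so the approach is viable, at the price of the extra refinement step and the parameter optimisation that the paper's $\t{M}$-trick makes unnecessary.

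Two caveats you should repair when writing this up. First, Lemma \ref{l:Plunnecke_R} gives one $X$ for one fixed $h$; your claim $|X\G^l|\le\d^{-l}(2M)^l|Q|$ for \emph{all} $l\le k+2$ with a single $X$ is an overclaim -- after adjoining $1$ to $\G$ you only get the uniform bound $|X\G^l|\le\d^{-(k+2)}(2M)^{k+2}|X|$, which is what your exponent count actually uses anyway. Second, the residual $2^{O(k)}$ constants (e.g.\ the $2^{3k+1}$ of (\ref{cond:Q_cap_Q+x_M}) and the factor needed to reach exactly $4M$) are not absorbed by $M^{(k+3)2^k}$ when $M$ is close to $1$; you should first dispose of the trivial case $|\G|^{\frac{1}{2(k+4)}2^{-k}}\le 4M$ (where the claim is weaker than $|\t Q|\le|Q|$) and use the resulting lower bound $|\G|\ge 2^{4(k+4)2^k}$ to absorb them. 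Relatedly, the condition $|X\G^{k}||\G|\le p$ of Corollary \ref{c:Q_cap_Q+x_M} is only guaranteed via the Pl\"unnecke bound, i.e.\ it needs roughly $|\G|^{\frac{(k+2)2^{-k}}{2(k+4)}}|Q||\G|\le p$, which in the nontrivial regime is stronger than the stated $(2M)^{k+1}|Q||\G|\le p$; the paper's proof silently faces the same issue with the condition $(2\t M)^{k+1}|\t Q||\G|\le p$ of Corollary \ref{c:E(Q,A)_M}, so this is a shared looseness rather than a defect of your route, but it deserves a sentence.
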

\begin{proof}
	Let $\t{Q} = Q\cap (Q'+x)$. 
	Then $|\t{Q} \G| \le |Q \G| \le M|Q| = M|Q|/|\t{Q}| \cdot |\t{Q}| := \t{M} |\t{Q}|$. 
	Similarly,  
	$|(\t{Q}-x) \G| \le |Q' \G| \le M|Q|$. 
	Applying the second part of Corollary \ref{c:E(Q,A)_M} with $\a=x$, $Q=\t{Q}$, $A=\G$ and $M=\t{M}$, we 
	get 
	$$
	M|Q| \ge |(\t{Q}-x) \G| 
	\ge
	2^{-(7+k)} |\t{Q}| \t{M}^{-(k+3)} |\G|^{\frac{1}{2} 2^{-k}} 
	=
	2^{-(7+k)} M^{-(k+3)} |\t{Q}|^{k+4} |Q|^{-(k+3)} |\G|^{\frac{1}{2} 2^{-k}} 
	$$
	provided
	$$
	|\G|^{k/8+1/2} 
	\ge
	|Q| \cdot  (2\t{M})^{(k+3)2^k} C^{(k+4)/4}_* \log^{k}  ((2\t{M})^k |Q|) 
	\ge
	$$
	$$ 
	\ge
	|\t{Q}| \cdot  (2\t{M})^{(k+3)2^k} C^{(k+4)/4}_* \log^{k}  ((2\t{M})^k |\t{Q}|) \,.
	$$
	It gives us
	\begin{equation}\label{f:t{Q}_tmp}
	|\t{Q}| \le 4M|Q| \cdot |\G|^{-\frac{1}{2(k+4)} 2^{-k}} \,.
	\end{equation}
	Now if inequality does not hold, then $\t{M}  \le |\G|^{\frac{1}{2(k+4)} 2^{-k}} /4$. 
	Hence the condition
	$$
	|\G|^{\frac{k}{8}+\frac{1}{2(k+4)}  } 
	\ge
	|Q| \cdot M^{(k+3)2^{k}} C^{(k+4)/4}_* \log^{k}  (|\G|^{\frac{k}{2(k+4)} 2^{-k}} |Q|) 
	$$
	is enough. 
	This completes the proof. 
	$\hfill\Box$
\end{proof}

\bigskip 

Now we are ready to prove the main asymmetric sum--product result of this section.

\begin{corollary}
	Let $A,B,C\subseteq \F_p$ be arbitrary sets, 
	and $k\ge 1$ be such that 
	$|A| |B|^{1+\frac{(k+1)}{2(k+4)}2^{-k}} \le p$ and 
	\begin{equation}\label{cond:ABC_my}
	|B|^{\frac{k}{8}+ \frac{1}{2(k+4)}} \ge |A| \cdot  C^{(k+4)/4}_* \log^{k}  (|A||B|) \,.
	\end{equation}
	Then
	\begin{equation}\label{f:ABC_my1}
	\max\{ |AB|, |A+C| \} \ge 2^{-3} |A| \cdot \min\{|C|, |B|^{\frac{1}{2(k+4)} 2^{-k}} \} \,,
	\end{equation}
	and for any $\a \neq 0$
	\begin{equation}\label{f:ABC_my2}
	\max\{ |AB|, |(A+\a)C| \} \ge 2^{-3} |A| \cdot \min\{|C|, |B|^{\frac{1}{2(k+4)} 2^{-k}} \} \,.
	\end{equation}
	Moreover, 
	\begin{equation}\label{f:ABC_my3}
	|AB| + \frac{|A|^2 |C|^2}{\E^{+} (A,C)} \ge 2^{-4} |A| \cdot \min\{|C|, |B|^{\frac{1}{4(k+4)} 2^{-k}} \} \,,
	\end{equation}
	and for any $\a \neq 0$ the following holds 
	\begin{equation}\label{f:ABC_my3'}
	|AB| + \frac{|A|^2 |C|^2}{\E^{\times} (A+\a,C)} \ge 2^{-4} |A| \cdot \min\{|C|, |B|^{\frac{1}{4(k+4)} 2^{-k}} \} \,,
	\end{equation}
	provided
	$$
	|B|^{\frac{k}{8} - \frac{1}{4} +  \frac{1}{4(k+4)}} \ge |A| \cdot   C^{(k+4)/4}_* \log^{k}  (|A||B|) \,.
	$$
	\label{c:ABC_my}
\end{corollary}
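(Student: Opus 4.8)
The plan is to deduce everything from a dichotomy on the ratio $M:=|AB|/|A|$, feeding the ``small $M$'' case into the single--multiplication Corollaries \ref{c:E(Q,A)_M} and \ref{c:QM_shift_more}.

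For (\ref{f:ABC_my1}) and (\ref{f:ABC_my2}) I would proceed as follows. If $M>2^{-3}|B|^{\frac{1}{2(k+4)}2^{-k}}$, there is nothing to do: then $|AB|=M|A|>2^{-3}|A||B|^{\frac{1}{2(k+4)}2^{-k}}\ge 2^{-3}|A|\min\{|C|,|B|^{\frac{1}{2(k+4)}2^{-k}}\}$, and both left sides of (\ref{f:ABC_my1}), (\ref{f:ABC_my2}) are at least $|AB|$. So assume $M\le 2^{-3}|B|^{\frac{1}{2(k+4)}2^{-k}}$, hence $2M\le|B|^{\frac{1}{2(k+4)}2^{-k}}$. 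Apply Corollary \ref{c:E(Q,A)_M} with $\G:=B$, $Q:=A$ (so $|Q\G|=|AB|\le M|Q|$), and the arbitrary set there taken to be $C$ (and the shift $\a$ for the multiplicative version). Its hypotheses are met: since $2M\le|B|^{\frac{1}{2(k+4)}2^{-k}}$ one has $(2M)^{k+1}|A||B|\le|A||B|^{1+\frac{(k+1)}{2(k+4)}2^{-k}}\le p$, and, using $(2M)^{(k+3)2^k}\le|B|^{\frac{k+3}{2(k+4)}}$, $(2M)^k|A|\le|A||B|$, and the identity $\tfrac12-\tfrac{k+3}{2(k+4)}=\tfrac1{2(k+4)}$, condition (\ref{c:cond_E(Q,A)_M}) reduces exactly to (\ref{cond:ABC_my}). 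The corollary then gives
$$
|A+C|\ \ge\ 2^{-3}|A|\cdot\min\bigl\{\,|C|,\ 2^{-(4+k)}M^{-(k+3)}|B|^{\frac{1}{2}2^{-k}}\,\bigr\}
$$
and the same bound for $|(A+\a)C|$; finally, from $M\le 2^{-3}|B|^{\frac{1}{2(k+4)}2^{-k}}$ and the same exponent identity, $2^{-(4+k)}M^{-(k+3)}|B|^{\frac{1}{2}2^{-k}}\ge|B|^{\frac{1}{2(k+4)}2^{-k}}$, which is (\ref{f:ABC_my1}) and (\ref{f:ABC_my2}).

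For the energy inequalities (\ref{f:ABC_my3}), (\ref{f:ABC_my3'}) I would instead use Corollary \ref{c:QM_shift_more}, which controls intersections of the sets themselves (no passage to a subset). With $\G:=B$ and $Q=Q':=A$ it yields, provided $M$ is small, the uniform bound $r_{A-A}(x)=|A\cap(A+x)|\le 4|AB|\,|B|^{-\frac{1}{2(k+4)}2^{-k}}$ for all $x\neq0$; with $Q:=A$, $Q':=\la A$ (both of size $|A|$) it covers the dilated shifts $r_{(A+\a)/(A+\a)}(\la)=|A\cap(\la A+(\la-1)\a)|$, $\la\neq1$, appearing in $\E^{\times}(A+\a,C)$. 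Hence
$$
\E^{+}(A,C)=|A||C|+\sum_{x\neq0}r_{A-A}(x)r_{C-C}(x)\ \le\ |A||C|+\Bigl(\max_{x\neq0}r_{A-A}(x)\Bigr)|C|^{2}\ \le\ |A||C|+4|AB|\,|C|^{2}|B|^{-\frac{1}{2(k+4)}2^{-k}},
$$
and likewise $\E^{\times}(A+\a,C)\le|A||C|+4|AB|\,|C|^{2}|B|^{-\frac{1}{2(k+4)}2^{-k}}$. By the elementary inequality $\frac1{a+b}\ge\frac12\min\{\frac1a,\frac1b\}$ this gives $\tfrac{|A|^{2}|C|^{2}}{\E^{+}(A,C)}\ge\tfrac{|A|}{2}\min\bigl\{|C|,\ \tfrac{|B|^{\frac{1}{2(k+4)}2^{-k}}}{4M}\bigr\}$, and a final split finishes it: if $M\le|B|^{\frac{1}{4(k+4)}2^{-k}}$ then the second entry is $\ge\tfrac14|B|^{\frac{1}{4(k+4)}2^{-k}}$, whereas if $M>|B|^{\frac{1}{4(k+4)}2^{-k}}$ then $|AB|=M|A|>|A||B|^{\frac{1}{4(k+4)}2^{-k}}$; in both cases (\ref{f:ABC_my3}), (\ref{f:ABC_my3'}) follow. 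The weaker hypothesis stated in that clause, $|B|^{\frac{k}{8}-\frac14+\frac1{4(k+4)}}\ge|A|\,C_*^{(k+4)/4}\log^k(|A||B|)$, is precisely what is needed for Corollary \ref{c:QM_shift_more} to apply in the first alternative: substituting $M\le|B|^{\frac{1}{4(k+4)}2^{-k}}$ into $M^{(k+3)2^k}$ and using $\tfrac{k}{8}+\tfrac1{2(k+4)}-\tfrac{k+3}{4(k+4)}=\tfrac{k}{8}-\tfrac{k+1}{4(k+4)}\ge\tfrac{k}{8}-\tfrac14+\tfrac1{4(k+4)}$ brings its size hypothesis down to the stated one (the ancillary factors of $2$ being harmless, as the size hypothesis already forces $|B|$ very large).

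I expect the genuine work to be entirely bookkeeping: fixing the ``small $M$'' thresholds so that they are at once compatible with the desired inequalities and with the hypotheses of the corollaries being invoked, and tracking the exponents. The two identities $\tfrac12-\tfrac{k+3}{2(k+4)}=\tfrac1{2(k+4)}$ and $\tfrac1{2(k+4)}-\tfrac{k+3}{4(k+4)}=-\tfrac{k+1}{4(k+4)}$ are exactly what single out the thresholds $|B|^{\frac{1}{2(k+4)}2^{-k}}$ and $|B|^{\frac{1}{4(k+4)}2^{-k}}$ and the two stated hypotheses; once they are in place, each remaining step is a direct substitution into Corollary \ref{c:E(Q,A)_M} or Corollary \ref{c:QM_shift_more}.
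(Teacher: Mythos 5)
Your argument is correct and is essentially the paper's own proof: the paper's ``optimal choice of the parameter $M$'' is exactly your dichotomy on $M=|AB|/|A|$ at the threshold $\sim|B|^{\frac{1}{2(k+4)}2^{-k}}$, it proves (\ref{f:ABC_my1}), (\ref{f:ABC_my2}) by applying Corollary \ref{c:E(Q,A)_M} with $Q=A$, $\G=B$, and it obtains (\ref{f:ABC_my3}), (\ref{f:ABC_my3'}) from Corollary \ref{c:QM_shift_more} combined with the energy computation of Corollary \ref{c:E(Q,A)}, followed by the split at $M\sim|B|^{\frac{1}{4(k+4)}2^{-k}}$, just as you do. One bookkeeping slip in your ``Moreover'' clause: you verify the hypothesis $(2M)^{k+1}|A||B|\le p$ of Corollary \ref{c:QM_shift_more} by declaring the factor $2^{k+1}$ harmless because the size hypothesis forces $|B|$ to be very large; in fact that step needs $|B|\ge 2^{4(k+4)2^{k}}$, which the stated hypothesis does not guarantee for large $k$ (it only forces a lower bound on $|B|$ that is essentially independent of $2^{k}$). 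The fix is immediate and is what the paper does: place the dichotomy at $M\le 2^{-1}|B|^{\frac{1}{4(k+4)}2^{-k}}$, so that $2M\le|B|^{\frac{1}{4(k+4)}2^{-k}}$ and hence $(2M)^{k+1}|A||B|\le|A||B|^{1+\frac{(k+1)}{2(k+4)}2^{-k}}\le p$ directly, the lost factor $2$ in the large-$M$ branch being absorbed by the constant $2^{-4}$ in (\ref{f:ABC_my3}), (\ref{f:ABC_my3'}) (your first dichotomy, with its threshold $2^{-3}|B|^{\frac{1}{2(k+4)}2^{-k}}$, already handles this correctly); alternatively one can note that when $|B|< 2^{4(k+4)2^{k}}$ the right-hand side of (\ref{f:ABC_my3}) is below $2^{-3}|A|$ and the claim is trivial since $|A|^2|C|^2/\E^{+}(A,C)\ge|A|$.
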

\begin{proof}
	We will prove just (\ref{f:ABC_my1}) because  the same arguments hold for (\ref{f:ABC_my2}). 
	Put $|AB| = M|A|$, $M\ge 1$. 
	Applying Corollary \ref{c:E(Q,A)_M} with $Q=A$, $\G = B$, $A=C$ and choosing $k$ such that 
	\begin{equation}\label{tmp:02.05_1}
	|B|^{k/8+1/2} \ge |A| \cdot  2^{(k+3)2^k} M^{(k+3)2^k} C^{(k+4)/4}_* \log^{k}  ((2M)^k |A|) \,,
	\end{equation}
	we obtain
	$$
	|A+C| \ge 2^{-3} |A| \cdot \min \{ |C|, 2^{-(k+4)} M^{-(k+3)} |B|^{\frac{1}{2} 2^{-k}} \} \,.
	$$
	Thus, by small calculations (which correspond to the optimal choice of the parameter $M$, namely, $M = M_0 = 2^{-2} |B|^{\frac{1}{2(k+4)} 2^{-k}}$), we get
	$$
	\max\{ |AB|, |A+C| \} \ge 2^{-3} |A| \cdot \min\{|C|, |B|^{\frac{1}{2(k+4)} 2^{-k}} \} \,.
	$$
	Substituting $M=M_0$ into (\ref{tmp:02.05_1}), we obtain our condition (\ref{cond:ABC_my}). 
	The condition  $(2M)^{k+1} |A| |B| \le p$ gives us $|A| |B|^{1+\frac{(k+1)}{2(k+4)}2^{-k}} \le p$.

	To prove (\ref{f:ABC_my3}),  (\ref{f:ABC_my3'}) we use Corollary \ref{c:QM_shift_more} instead of Corollary \ref{c:E(Q,A)_M} and apply the arguments of the proof of Corollary \ref{c:E(Q,A)}.
	We obtain
	$$
	\E^{+} (A,C), \E^{\times} (A+\a, C) \le 2|A||C| + 4 |A| |C|^2 \cdot M^{} \cdot |B|^{-\frac{1}{2(k+4)} 2^{-k}}
	$$
	After that it remains to choose $M$ optimally, $M=2^{-1}  |B|^{\frac{1}{4(k+4)} 2^{-k}}$.  
	This completes the proof. 
	$\hfill\Box$
\end{proof}


\bigskip

Notice that one cannot obtain any nontrivial bounds for 
$\min\{ \E^{\times} (A,B)$, $\E^{+} (A,C) \}$.
Just take $B$ equals a geometric progression, $C$ equals an arithmetic  progression, $|B| = |C|$ and $A = B\cup C$. 



\begin{remark}
	The results of this section take place in $\R$.
	In this case we do not need in any 
	conditions 
	containing 
	the characteristic $p$. 
\end{remark}

Corollary \ref{c:ABC_my} gives us a series of examples of "superquadratic expanders" \cite{BR-NZ} with four variables. 
The first example of such an expander with four variables was given in \cite{Rudnev_cross}.

\begin{corollary}
	Let $\_phi : \R \to \R$ be an injective function. 
	Then for any $\kappa < \frac{1}{40} 2^{-16}$ and an arbitrary finite set $A\subset \R$ one has $|R[A] \_phi (A)| \gg |A|^{2+\kappa}$. 
	In particular,
	$$
	R[A] A = \left\{ \frac{(y-x)w}{z-x} ~:~ x,y,z,w \in A,\, x \neq z  \right\}
	$$
	is a superquadratic expander with four variables. \\
	Moreover, for any finite sets $A,B,C,D$ of equal sizes one has 
	\begin{equation}\label{f:superquadratic_ABCD}
	\left|\left\{ \frac{(y-x)w}{z-x} ~:~ x\in A,\,y\in B,\,z\in C,\,w\in D,\, x \neq z  \right\} \right| \gg |A|^{2+\kappa} \,. 
	\end{equation}
	\label{c:superquadratic}
\end{corollary}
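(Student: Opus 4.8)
The plan is to read the corollary off from the asymmetric sum--product estimate of Corollary~\ref{c:E(Q,A)_M}, exploiting three features of the cross--ratio set $R[A]$: it is quadratically large, it is invariant under $x\mapsto 1-x$, and it is a union of affine copies of $A$. First I observe that $R[A]\varphi(A)$ is genuinely a product set: since $x,y,z,w$ run independently, $R[A]A=\{(y-x)w/(z-x):x,y,z,w\in A,\ x\neq z\}$ is the product set $R[A]\cdot A$, and likewise the set in (\ref{f:superquadratic_ABCD}) equals $\tilde R\cdot D$, where $\tilde R=\{(y-x)/(z-x):x\in A,\ y\in B,\ z\in C,\ x\neq z\}$. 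Hence it suffices to prove that the product of a cross--ratio set with an arbitrary set of the same size as the base set is superquadratic. I argue by a dichotomy: if $|R[A]\varphi(A)|>|A|^{2+\kappa}$ there is nothing to prove, so assume $|R[A]\varphi(A)|\le|A|^{2+\kappa}$. By \cite{Shkr_tripling} one has $|R[A]|\gg|A|^2\log^{-1}|A|$, so $|R[A]|\le|R[A]\varphi(A)|\le|A|^{2+\kappa}$ and the multiplicative doubling $M:=|R[A]\varphi(A)|/|R[A]|$ obeys $M\ll|A|^{\kappa}\log|A|=|A|^{o(1)}$, because $\kappa$ is tiny.

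Next I would apply Corollary~\ref{c:E(Q,A)_M} with $Q:=R[A]$, with the role of $\G$ played by $\varphi(A)$ (so $|\varphi(A)|=|A|$ by injectivity of $\varphi$), and with $k=16$. Over $\R$ the condition $(2M)^{k+1}|Q||\G|\le p$ is void, while (\ref{c:cond_E(Q,A)_M}) becomes $|A|^{5/2}\ge|R[A]|(2M)^{19\cdot2^{16}}|A|^{o(1)}$, which holds for large $|A|$ since $|R[A]|\le|A|^{2+\kappa}$, $2M=|A|^{o(1)}$, and $\kappa<\frac{1}{40}2^{-16}$ forces $2+\kappa(1+19\cdot2^{16})<5/2$. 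The corollary then yields, for every set $X$ and every $\a\neq0$ (after replacing $|\G|$ by $|\varphi(A)|=|A|$),
\[
|X(R[A]+\a)|\ \ge\ 2^{-3}|R[A]|\cdot\min\{|X|,\ 2^{-20}M^{-19}|A|^{2^{-17}}\}\,.
\]
Taking $X=\varphi(A)$ and $\a=-1$, and using $R[A]=1-R[A]$ (so $R[A]-1=-R[A]$; see \cite{MPR-NRS}), the left--hand side is $|\varphi(A)\cdot(-R[A])|=|R[A]\varphi(A)|=M|R[A]|$, hence
\[
M\ \ge\ 2^{-3}\min\{|A|,\ 2^{-20}M^{-19}|A|^{2^{-17}}\}\,.
\]

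A short case analysis finishes the job. If the minimum equals $|A|$, then $M\ge 2^{-3}|A|$; otherwise $M^{20}\ge 2^{-23}|A|^{2^{-17}}$, i.e. $M\gg|A|^{\frac{1}{20}2^{-17}}=|A|^{\frac{1}{40}2^{-16}}$. Either way $M\gg|A|^{\frac{1}{40}2^{-16}}$, so $|R[A]\varphi(A)|=M|R[A]|\gg|A|^{\frac{1}{40}2^{-16}}\cdot|A|^2\log^{-1}|A|\gg|A|^{2+\kappa}$ for every $\kappa<\frac{1}{40}2^{-16}$ and all large $|A|$ (the finitely many small $|A|$ being absorbed into the implied constant). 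This proves $|R[A]\varphi(A)|\gg|A|^{2+\kappa}$; specialising to $\varphi=\mathrm{id}$ gives the assertion about the four--variable expander $R[A]A$. For (\ref{f:superquadratic_ABCD}) I would repeat the argument with $R[A]$ replaced by $\tilde R$ and $\varphi(A)$ by $D$, using $|\tilde R|\gg n^2\log^{-1}n$ (the method of \cite{Shkr_tripling} adapts to the mixed set) and, in place of the symmetry, the affine-copy description: $\tilde R$ contains a set $\lambda B+\mu$ with $\lambda\neq0$, $\mu\neq0$ (fix $a_0\in A\setminus\{0\}$ and $c_0\in C\setminus\{a_0\}$, and put $\lambda=(c_0-a_0)^{-1}$, $\mu=-a_0(c_0-a_0)^{-1}$), so that $|(\tilde R-\mu)D|\ge|\lambda BD|=|BD|\le n^2$; feeding $\a=-\mu$ into Corollary~\ref{c:E(Q,A)_M} then forces $M$ to be at least $n^{\frac{1}{2(k+3)}2^{-k}-o(1)}$, which for $k=16$ more than suffices.

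The step I expect to cost the most care is the verification of hypothesis (\ref{c:cond_E(Q,A)_M}) of Corollary~\ref{c:E(Q,A)_M}. This is precisely where the quadratic largeness $|R[A]|\gg|A|^2\log^{-1}|A|$ is indispensable: it is what makes the doubling $M$ subpolynomial under the assumption $|R[A]\varphi(A)|\le|A|^{2+\kappa}$, and it is the competition between the exponent gain $\sim\frac{1}{2(k+4)}2^{-k}$ produced by Corollary~\ref{c:E(Q,A)_M} and the cost $\sim(k+3)2^k$ in its hypothesis that pins the admissible $\kappa$ down to the stated range, attained at $k=16$. Everything else -- the identity $R[A]=1-R[A]$ (respectively the affine-copy structure of $\tilde R$) and the translation of the output of Corollary~\ref{c:E(Q,A)_M} back into a bound on $|R[A]\varphi(A)|$ itself -- is routine.
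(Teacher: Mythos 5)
Your first claim, the lower bound $|R[A]\varphi(A)|\gg|A|^{2+\kappa}$, is proved correctly and by essentially the paper's route: the paper simply applies estimate (\ref{f:ABC_my2}) of Corollary \ref{c:ABC_my} with $A=R[A]$, $B=C=\varphi(A)$, $\alpha=-1$, together with $|R[A]|\gg|A|^{2}/\log|A|$ and $R[A]=1-R[A]$, while you unwind that corollary and run its internal dichotomy/optimisation in $M$ directly on Corollary \ref{c:E(Q,A)_M}; the numerology ($k=16$, threshold $\frac{1}{40}2^{-16}$) comes out the same, and your verification of (\ref{c:cond_E(Q,A)_M}) under the standing assumption $|R[A]\varphi(A)|\le|A|^{2+\kappa}$ is sound (over $\R$ the condition involving $p$ indeed disappears, as the paper remarks). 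A small point: the bound $|R[A]|\gg|A|^2/\log|A|$ over $\R$ is taken from \cite{J_PhD}, \cite{R_Minkovski}, not from \cite{Shkr_tripling}, which concerns multiplicative subgroups.

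The ``Moreover'' part, however, has a genuine gap. In part one the argument closes only because $\alpha=-1$ and the symmetry $R[A]=1-R[A]$ give the identity $|(R[A]-1)\varphi(A)|=|R[A]\varphi(A)|=M|R[A]|$: the quantity that Corollary \ref{c:E(Q,A)_M} bounds from below is exactly $M|R[A]|$, and that is what forces $M$ to be large. Your substitute for the symmetry --- that $\tilde{R}\supseteq\lambda B+\mu$, hence $|(\tilde{R}-\mu)D|\ge|\lambda BD|=|BD|$ --- produces only a \emph{lower} bound on $|(\tilde{R}-\mu)D|$, i.e.\ it points the same way as the conclusion of Corollary \ref{c:E(Q,A)_M} and cannot be played against it. To deduce anything about $M=|\tilde{R}D|/|\tilde{R}|$ you would need an \emph{upper} bound for $|(\tilde{R}-\mu)D|$ in terms of $M|\tilde{R}|$, and for a generic nonzero shift $\mu$ no such bound exists: $|(\tilde{R}-\mu)D|$ may well be of order $n^{3}$ while $|\tilde{R}D|$ is small, consistently with everything you assume, so ``feeding $\alpha=-\mu$ into Corollary \ref{c:E(Q,A)_M}'' does not force $M\ge n^{\frac{1}{2(k+3)}2^{-k}-o(1)}$ (the exponent is also off: the corollary gives $k+4$, not $k+3$). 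The paper's route is different: by the method of \cite{J_PhD}, \cite{R_Minkovski} the mixed ratio set $\tilde{R}=\{(b-a)/(c-a):a\in A,\,b\in B,\,c\in C\}$ is itself of size $\gg|A|^{2}/\log|A|$, and one then repeats the first argument verbatim with $\alpha=-1$, using that $1-\tilde{R}$ is again a mixed ratio set of the same shape (with the roles of $A$ and $C$ interchanged), so that both entries of the resulting maximum are products of the form appearing in (\ref{f:superquadratic_ABCD}); your affine-copy detour should be replaced by that symmetry argument.
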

\begin{proof}
	By a result from \cite{J_PhD}, \cite{R_Minkovski}, we have $|R[A]| \gg |A|^2 / \log |A|$. 
	Further $R[A] = 1-R[A]$ and $R^{-1} [A] = R[A]$, see Remark \ref{r:2/3}. 
	Hence applying estimate (\ref{f:ABC_my2}) of Corollary \ref{c:ABC_my} with $A=R[A]$, $B=C=\_phi(A)$ and $\alpha = -1$,  we obtain 
	$$
	|R[A] \_phi (A)| \gg |R[A]| \cdot |A|^{\frac{1}{2(k+4)} 2^{-k}} \,,
	$$
	provided 
	\begin{equation}\label{tmp:12.05_1}
	|A|^{\frac{k}{8}+ \frac{1}{2(k+4)}} \ge |R[A]| \cdot  2^{2k} C^{(k+4)/4}_* \log^{k}  |A| 
	\ge
	\end{equation}
	$$
	\ge 
	|R[A]| \cdot  C^{(k+4)/4}_* \log^{k}  |R[A]\_phi (A)| \,.
	$$
	Put $|R[A]| = C |A|^{2+c} / \log |A|$, $c\ge 0$ and $C>0$ is an absolute constant.  
	Then taking $k=16+8c$, say, we satisfy (\ref{tmp:12.05_1}) for large $A$.  
	It follows that
	$$
	|R[A] \_phi (A)| \gg |A|^{2+c + \frac{1}{2(20+8c)} 2^{-16-8c}} \log^{-1} |A| \,.
	$$
	One can check that the optimal choice of $c$ is $c=0$. 
	%
	Finally, to prove (\ref{f:superquadratic_ABCD}) just notice that from 
	the method 
	of  \cite{J_PhD}, \cite{R_Minkovski}
	it follows that $\left| \left\{ \frac{b-a}{c-a} ~:~ a\in A,\, b\in B,\, c\in C,\, c \neq a \right\} \right| \gg |A|^2 / \log |A|$
	for any sets $A,B,C$ of equal cardinality. 
	After that repeat the arguments above. 
	This completes the proof. 
	$\hfill\Box$
\end{proof}




\begin{remark}
	Let us 
	show quickly how  
	Corollary \ref{c:ABC_my} implies both Theorems \ref{t:Bourgain_ABC}, \ref{t:Bourgain_ABC_shift} for sets $A$ with $|A| < p^{1/2-\eps}$
	(the appearance $\sqrt{p}$ bound was discussed in Remark \ref{r:E_sqrt}).

	Let $B,C$ be some sets of sizes 
	greater than 
	$p^{\eps}$
	such that $\max\{ |AB|, |A+C| \} \le p^\d |A|$ or $\max\{ |(A+\a)B|, |A+C| \} \le p^\d |A|$ for some $\a \neq 0$.
	We can find sufficiently large $k=k(\eps)$ such that  condition (\ref{cond:ABC_my}) takes place for $B$ because $|A| < p^{1/2-\eps} \le p$ and $|B|\ge p^\eps$.
	Applying Corollary \ref{c:ABC_my} for $A,B, C$, we arrive to a contradiction.
	Finally, to insure that  
	$|A| |B|^{1+\frac{(k+1)}{2(k+4)}2^{-k}} \le p$ 
	just use  the assumption 
	$|A| < p^{1/2-\eps}$, inequality $|B| \le |AB| \le p^{\d} |A|$ 
	and take sufficiently small $\d = \d(\eps)$ and  sufficiently large $k = k(\eps)$. 
	\label{r:implication}
\end{remark}


Let $A\subset \R$ be a finite set.
Consider a characteristic of $A$ (see, e.g., \cite{s_diff}), which generalize the notion of small multiplicative doubling of $A$.
%
%
%
%
%
%
Namely, put 
$$
d^{+} (A) := \inf_f \min_{B\neq \emptyset} \frac{|f(A)+B|^2}{|A| |B|} \,, 
$$
where the infimum is taken over convex/concave functions $f$.

{\bf Problem.} Suppose that $d^{+} (A) \le |A|^\eps$, $\eps>0$ is a small number. 
Is it true that there is $k=k(\eps)$ such that $\E^{+}_k (A) \ll |A|^{k}$?

Notice that one cannot obtain a similar bound for $\T^{+}_k (A)$.
Indeed, let $A = \{1^2, 2^2, \dots, n^2 \}$. 
Then one can show that for such $A$   the quantity $d^{+}(A)$ is $O(1)$ (see, e.g., \cite{s_diff}) 
but, clearly, $|kA| \ll_k |A|^2$. 
It means that 
it is not possible to obtain 
any upper bound for $\T^{+}_k (A)$ of the form $\T^{+}_k (A) \ll |A|^{2k-2-c}$, $c>0$ 
and hence any analogues of Theorems \ref{t:T_k_G_M}, \ref{t:T_k_G_MR} for sets $A$ with small  $d^{+} (A)$.

\bigskip

\noindent{I.D.~Shkredov\\
	Steklov Mathematical Institute,\\
	ul. Gubkina, 8, Moscow, Russia, 119991}
\\
and
\\
IITP RAS,  \\
Bolshoy Karetny per. 19, Moscow, Russia, 127994\\
and 
\\
MIPT, \\ 
Institutskii per. 9, Dolgoprudnii, Russia, 141701\\
{\tt ilya.shkredov@gmail.com}



\begin{thebibliography}{99}
	
	
	
	
	
	
	\bibitem{AMRS}
	{\sc E. Aksoy Yazici, B. Murphy, M. Rudnev, I. D. Shkredov, }
	{\em Growth Estimates in Positive Characteristic via Collisions, }
	Int. Math. Res. Not. IMRN, 2016. First published online, doi: 10.1093/imrn/rnw206.
	
	
	\bibitem{BR-NZ}
	{\sc  A. Balog, O. Roche-Newton, D. Zhelezov, }
	{\em Expanders with superquadratic growth, } arXiv:1611.05251v1 [math.CO] 16 Nov 2016.
	
	
	\bibitem{Bourgain_more}
	{\sc J.~Bourgain, }
	{\em More on the sum--product phenomenon in prime fields and its applications, }
	Int. J. Number Theory {\bf 1}:1 (2005), 1--32.
	
	\bibitem{Bourgain_01}
	{\sc J.~Bourgain, }
	{\em On the Erd\H{o}s-Volkmann and Katz-Tao ring conjecture, }
	GAFA 13 (2003), 334--365.
	
	
	\bibitem{Bourgain_DH}
	{\sc J.~Bourgain, }
	{\em Estimates on exponential sums related to the Diffie--Hellman distributions, } 
	GAFA {\bf 15}:1 (2005), 1--34.
	
	
	\bibitem{Bourgain_Z_q}
	{\sc J.~Bourgain, }
	{\em Exponential sum estimates over subgroups of $\Z^*_q$, $q$ arbitrary, } 
	J. Anal. Math. 97 (2005), 317--355.
	
	
	\bibitem{Bourgain_ideal}
	{\sc J.~Bourgain, }
	{\em Exponential sum estimates in finite commutative rings and applications, }
	J. Anal. Math. 101 (2007), 325--355.
	
	
	\bibitem{BC}
	{\sc J.~Bourgain, M.-C. Chang, } 
	{\em Exponential sum estimates over subgroups and almost subgroups of, where $Q$ is composite with few prime factors, } 
	GAFA {\bf 16}:2 (2006), 327--366.
	
	
	\bibitem{BG}
	{\sc J.~Bourgain, M.Z.~Garaev, }
	\emph{On a variant of sum-product estimates and explicit exponential sum bounds in prime fields, }
	Math. Proc. Cambridge Philos. Soc. 146 (2009), no.1, 1--21.
	
	
	
	\bibitem{BGK} 
	{\sc J.~Bourgain, A.~A.~Glibichuk, S.~V.~Konyagin, }
	{\em Estimate for the number of sums and products and for exponential sums in fields of prime order.}
	J. London Math. Soc. (2) 73 (2006), 380--398.
	
	
	\bibitem{BKT}
	{\sc J.~Bourgain, N.~Katz, T.~Tao,, }
	{\em A sum--product theorem in finite fields and applications, }
	GAFA,  {\bf 14}:1 (2004), 27--57.
	
	
	
	
	\bibitem{BC}
	{\sc A. Bush, E. Croot, }
	{\em Few products, many h--fold sums, }
	arXiv:1409.7349v4 [math.CO] 18 Oct 2014.
	
	
	
	\bibitem{ES}
	{\sc P.~Erd\H{o}s, E.~Szemer\'{e}di, }
	\emph{On sums and products of integers, }
	Studies in pure mathematics, 213--218, Birkh\"auser, Basel, 1983.
	
	
	\bibitem{Garaev_survey}
	{\sc M.Z.~Garaev, }
	\emph{Sums and products of sets and estimates of rational trigonometric sums in fields of prime order, }
	Uspekhi Mat. Nauk, {\bf 65}:4 (2010), 599--658.
	
	
	
	\bibitem{GK}
	{\sc A.A.~Glibichuk, S.V.~Konyagin, }
	{\em Additive Properties of Product Sets in Fields of Prime Order, }
	Additive combinatorics 43 (2007): 279--286.
	
	
	
	\bibitem{Harald}
	{\sc H.A.~Helfgott, } 
	{\em Growth and generation in $SL_2 (\Z/p\Z)$}, 
	Annals of Mathematics (2008), 601--623.
	
	
	
	\bibitem{J_PhD}
	{\sc T.G.F. Jones, }
	{\em New quantitative estimates on the incidence geometry and growth of finite sets, }
	PhD thesis, arXiv:1301.4853, 2013.
	
	
	\bibitem{K_mult}
	{\sc S.V.~Konyagin, }
	{\em h--fold Sums from a Set with Few Products, }
	MJCNT, {\bf 4}:3  (2014), 14--20.
	
	
	\bibitem{KS2}
	{\sc S.~V.~Konyagin, I.~D.~Shkredov, }
	\emph{New results on sum-products in $\R$, }
	Proc.  Steklov Inst. Math. {\bf 294} (2016), 87--98.
	
	
	
	\bibitem{KS1}
	{\sc S.~V.~Konyagin, I.~Shparlinski, }
	{\em Character sums with exponential functions, } Cambridge University Press, Cambridge, 1999.
	
	
	\bibitem{MPR-NRS}
	{\sc B. Murphy, G. Petridis, O. Roche-Newton, M. Rudnev, I. D. Shkredov, }
	{\em New results on sum-product type growth in positive characteristic, } arXiv:1702.01003v2  [math.CO]  8 Feb 2017.
	
	
	
	\bibitem{R_Minkovski}
	{\sc O.~Roche--Newton, }
	{\em  A short proof of a near--optimal cardinality estimate for the product of a sum set, }
	arXiv:1502.05560v1 [math.CO] 19 Feb 2015.
	
	
	\bibitem{RRS}
	{\sc O. Roche--Newton, M. Rudnev, I. D. Shkredov, }
	{\em New sum--product type estimates over finite fields, }
	Adv. Math. {\bf 293} (2016), 589--605.
	
	
	
	\bibitem{Rudnev_pp}
	{\sc M. Rudnev, }
	{\em On the number of incidences between planes and points in three dimensions, }
	Combinatorica, 2017. First published online, doi:10.1007/s00493-016-3329-6.
	
	
	
	\bibitem{Rudnev_cross}
	{\sc M. Rudnev, }
	{\em On distinct cross--ratios and related growth problems}
	arXiv:1705.01830v1 [math.CO] 4 May 2017.
	
	
	
	
	
	\bibitem{Ruzsa_book}
	{\sc I.Z. Ruzsa, }
	{\em Sumsets and structure, }
	Combinatorial number theory and additive group theory (2009): 87--210.
	
	
	
	\bibitem{SS1}  
	{\sc T. Schoen and I. D. Shkredov, }
	\emph{Higher moments of convolutions, }
	J. Number Theory 133 (2013), no. 5, 1693--1737.
	
	
	
	\bibitem{s_ineq} 
	{\sc I. D. Shkredov, }
	\emph{Some new inequalities in additive combinatorics, }
	MJCNT, {\bf 3}:2 (2013), 237--288.
	
	
	\bibitem{Shkr_H+L}  
	{\sc I. D. Shkredov, }
	{\em Energies and structure of additive sets, }
	Electronic Journal of Combinatorics, {\bf 21}:3 (2014), \#P3.44, 1--53.
	
	
	\bibitem{Shkr_tripling}  
	{\sc I. D. Shkredov, }
	{\em On tripling constant of multiplicative subgroups, }
	Integers, \#A75 {\bf 16} (2016), 1--10. 
	
	
	\bibitem{s_E_k}
	{\sc I. D. Shkredov, }
	{\em Some remarks on the Balog--Wooley decomposition theorem and quantities $D^+$, $D^\times$, }
	Proc. Steklov Math. Inst., accepted; arXiv:1605.00266v1 [math.CO] 1 May 2016. 
	
	
	\bibitem{s_diff}
	{\sc I. D. Shkredov, }
	{\em Difference sets are not multiplicatively closed, }
	Discrete Analysis, 17, (2016), 1--21. DOI: 10.19086/da.913.
	
	
	
	
	
	\bibitem{Steinikov_T_3}
	{\sc Yu. N. Shteinikov, }
	{\em Estimates of trigonometric sums over subgroups and some of their applications, } Mathematical Notes, {\bf 98}:3 (2015), 667--684.
	
	
	
	\bibitem{sz-t}
	{\sc E.~Szemer\' edi, W.~T.~Trotter,} {\em Extremal problems in
		discrete geometry,} Combinatorica 3 (1983), 381--392.
	
	
	
	\bibitem{TV}
	{\sc T.~Tao, V.~Vu, }{\em Additive combinatorics,} Cambridge University Press 2006.
	
	
	
\end{thebibliography}
\end{document}